\let\OLDthebibliography\thebibliography
\renewcommand\thebibliography[1]{
  \OLDthebibliography{#1}
  \setlength{\parskip}{0pt}
  \setlength{\itemsep}{2pt plus 0.3ex}
}
\newcommand{\wt}[1]{\widetilde{#1}}
\newcommand{\mc}[1]{\mathcal{#1}}
\newcommand{\rom}[1]{\uppercase\expandafter{\romannumeral #1\relax}}
\newcommand{\beas}{\begin{eqnarray*}}
\newcommand{\enas}{\end{eqnarray*}}
\newcommand{\bea}{\begin{eqnarray}}
\newcommand{\ena}{\end{eqnarray}}
\newcommand{\bms}{\begin{multline*}}
\newcommand{\ems}{\end{multline*}}
\newcommand{\bels}{\begin{align*}}
\newcommand{\enls}{\end{align*}}
\newcommand{\bel}{\begin{align}}
\newcommand{\enl}{\end{align}}
\newcommand{\ignore}[1]{}
\newtheorem{theorem}{Theorem}[section]
\newtheorem{corollary}{Corollary}[section]
\newtheorem{remark}{Remark}[section]
\newtheorem{lemma}{Lemma}[section]
\newtheorem{definition}{Definition}[section]
\newtheorem{assumption}{Assumption}[section]
\def\blfootnote{\xdef\@thefnmark{}\@footnotetext}
\def\mc{\mathcal}
\def\l{\left}
\def\r{\right}
\newcommand{\mb}{\mathbb}
\newcommand\argmin{\mathop{\mbox{argmin}}}
\newcommand{\mf}[1]{\mathbf{#1}}
\newcommand{\wh}{\widehat}
\newcommand{\dotp}[2]{\left\langle#1,#2\right\rangle}
\title{Solving Non-smooth Constrained Programs with Lower Complexity than $\mathcal{O}(1/\varepsilon)$: A Primal-Dual Homotopy Smoothing Approach}
\author{
  Xiaohan Wei \\
  Department of Electrical Engineering \\
  University of Southern California\\
  Los Angeles, CA, USA, 90089 \\
  \texttt{xiaohanw@usc.edu} \\
  \And
  Hao Yu \\
  Alibaba Group (U.S.) Inc.\\
  Bellevue, WA, USA, 98004\\
  \texttt{hao.yu@alibaba-inc.com} \\
  \And
  Qing Ling \\
  School of Data and Computer Science \\
  Sun Yat-Sen University\\
  Guangzhou, China, 510006 \\
  \texttt{lingqing556@mail.sysu.edu.cn} \\
  \And
  Michael J. Neely \\
  Department of Electrical Engineering \\
  University of Southern California\\
  Los Angeles, CA, USA, 90089 \\
  \texttt{mikejneely@gmail.com } \\
}
\begin{document}

\maketitle

\begin{abstract}
We propose a new primal-dual homotopy smoothing algorithm for a linearly constrained convex program, where neither the primal nor the dual function has to be smooth or strongly convex. The best known iteration complexity solving such a non-smooth problem is $\mathcal{O}(\varepsilon^{-1})$. In this paper, 
we show that by leveraging a local error bound condition on the dual function, the proposed algorithm can achieve a better primal convergence time of  $\mathcal{O}\l(\varepsilon^{-2/(2+\beta)}\log_2(\varepsilon^{-1})\r)$, where $\beta\in(0,1]$ is a local error bound parameter. 
As an example application of the general algorithm, we show that the distributed geometric median problem, which can be formulated as a constrained convex program, has its dual function non-smooth but satisfying the aforementioned local error bound condition with $\beta=1/2$, therefore enjoying a convergence time of $\mathcal{O}\l(\varepsilon^{-4/5}\log_2(\varepsilon^{-1})\r)$. This result improves upon the $\mathcal{O}(\varepsilon^{-1})$ convergence time bound achieved by existing distributed optimization algorithms. Simulation experiments also demonstrate the performance of our proposed algorithm. 
\end{abstract}

\section{Introduction}
We consider the following linearly constrained convex optimization problem:
\begin{align}
\min&~~f(\mf x)     \label{prob-1}\\
\text{s.t.}&~~\mf A \mf x - \mf b = 0,~~\mf x \in \mathcal{X},    \label{prob-2}
\end{align}
where $\mathcal{X}\subseteq \mb R^d$ is a compact convex set,  $f: \mb R^d \rightarrow \mb R$ is a convex function, $\mf A \in\mb R^{N\times d},~\mf b\in\mb R^N$. Such an optimization problem has been studied in numerous works under various application scenarios such as machine learning (\cite{yurtsever2015universal}), signal processing (\cite{ling2010decentralized}) and communication networks (\cite{yu2017new}). The goal of this work is to design new algorithms for (\ref{prob-1}-\ref{prob-2}) achieving an 
$\varepsilon$ approximation with better convergence time than $\mathcal{O}(1/\varepsilon)$.


\subsection{Optimization algorithms related to constrained convex program}
Since enforcing the constraint $\mf A \mf x - \mf b = 0$ generally requires a significant amount of computation in large scale systems, the majority of the scalable algorithms solving problem (\ref{prob-1}-\ref{prob-2}) are of primal-dual type. Generally, the efficiency of these algorithms depends on two key properties of the dual function of (\ref{prob-1}-\ref{prob-2}), namely, the Lipschitz gradient and strong convexity. When the dual function of (\ref{prob-1}-\ref{prob-2}) is smooth, primal-dual type algorithms with Nesterov's acceleration on the dual of \eqref{prob-1}-\eqref{prob-2} can achieve a convergence time of $\mathcal{O}(1/\sqrt{\varepsilon})$ (e.g. \cite{yurtsever2015universal,tran2018smooth})\footnote{Our convergence time to achieve within $\varepsilon$ of optimality is in terms of \emph{number of  (unconstrained) maximization steps} 
$\arg\max_{\mathbf x \in \mathcal{X}} [\lambda^T (\mathbf{Ax}-\mathbf{b}) - f(\mathbf{x}) - \frac{\mu}{2}\|\mathbf{x}-\tilde{\mathbf{x}}\|^2 ] $
where constants $\lambda, A, \tilde{x}, \mu$ are known.  This is a standard measure of convergence time for Lagrangian-type algorithms that turn a constrained problem into a sequence of unconstrained problems.}. When the dual function has both the Lipschitz continuous gradient and the strongly convex property, algorithms such as dual subgradient and ADMM enjoy a linear convergence $\mathcal{O}(\log (1/\varepsilon))$ (e.g. \cite{yu2017convergence,deng2016global}). 
However, when neither of the properties is assumed, the basic dual-subgradient type algorithm gives a relatively worse $\mathcal{O}(1/\varepsilon^2)$ convergence time (e.g. \cite{wei2015probabilistic, wei2018primal}), while its improved variants 
 yield a convergence time of $\mathcal{O}(1/\varepsilon)$ (e.g. \cite{lan2013iteration, deng2017parallel, yu2017simple, yurtsever2018conditional, gidel2018frank}).

More recently, several works seek to achieve a better convergence time than $\mathcal{O}(1/\varepsilon)$ under weaker assumptions than Lipschitz gradient and strong convexity of the dual function. Specifically,
building upon the recent progress on the gradient type methods for optimization with H$\ddot{o}$lder continuous gradient (e.g. \cite{nesterov2015complexity, nesterov2015universal}), the work \cite{yurtsever2015universal} develops a primal-dual gradient method solving (\ref{prob-1}-\ref{prob-2}), which achieves a convergence time of $\mathcal{O}(1/\varepsilon^{\frac{1+\nu}{1+3\nu}})$, where $\nu$ is the modulus of H$\ddot{o}$lder continuity on the gradient of the dual function of the formulation (\ref{prob-1}-\ref{prob-2}).\footnote{The gradient of function $g(\cdot)$ is H$\ddot{o}$lder continuous with modulus $\nu\in(0,1]$ on a set $\mathcal{X}$ if $\|\nabla g(\mf x) - \nabla g(\mf y)\|\leq L_\nu\|\mf x - \mf y\|^{\nu},~\forall \mf x,\mf y \in\mathcal{X}$, where $\|\cdot\|$ is the vector 2-norm and $L_\nu$ is a constant depending on $\nu$.} On the other hand, the work \cite{yu2017convergence} shows that when the dual function has Lipschitz continuous gradient and satisfies a locally quadratic property (i.e. a local error bound with $\beta =1/2$, see Definition \ref{def:local-error} for details), which is weaker than strong convexity, one can still obtain a linear convergence with a dual subgradient algorithm. A similar result has also been proved for ADMM in \cite{han2015linear}.

In the current work, we aim to address the following question: \textit{Can one design a scalable algorithm with lower complexity than $\mathcal{O}(1/\varepsilon)$ solving (\ref{prob-1}-\ref{prob-2}), when both the primal and the dual functions are possibly non-smooth?} More specifically, we look at a class of problems with dual functions satisfying only a local error bound, and show that indeed one is able to obtain a faster primal convergence via a \textit{primal-dual homotopy smoothing method}  under a local error bound condition on the dual function. 

Homotopy methods were first developed in the statistics literature in relation to the model selection problem for LASSO, where, instead of computing a single solution for LASSO, one computes a complete solution path by varying the regularization parameter from large to small (e.g. \cite{osborne2000new, xiao2013proximal}).\footnote{ The word ``homotopy'', which was adopted in \cite{osborne2000new}, refers to the fact that the mapping from regularization parameters to the set of solutions of the LASSO problem is a continuous piece-wise linear function.} On the other hand, the smoothing technique for minimizing a non-smooth convex function of the following form was first considered in \cite{nesterov2005smooth}:
\begin{equation}\label{eq:nesterov}
\Psi(\mf x) =  g(\mf x) + h(\mf x), ~\mf x\in\Omega_1
\end{equation}
where $\Omega_1\subseteq\mb R^d$ is a closed convex set, $h(\mf x)$ is a convex smooth function, and $g(\mf x)$ can be explicitly written as 
\begin{equation}\label{eq:dual-form}
g(\mf x) = \max_{\mf u\in\Omega_2} \dotp{\mf A\mf x}{\mf u} - \phi(\mf u),
\end{equation}
where for any two vectors $\mf a, \mf b\in\mb R^d$, $\dotp{\mf a}{\mf b} = \mf a^T\mf b$, $\Omega_1\subseteq\mb R^d$ is a closed convex set, and $\phi(\mf u)$ is a convex function. By adding a strongly concave proximal function of $\mf u$ with a smoothing parameter $\mu>0$ into the definition of $g(\mf x)$, one can obtain a smoothed approximation of $\Psi(\mf x)$ with smooth modulus $\mu$. Then, \cite{nesterov2005smooth} employs the accelerated gradient method on the smoothed approximation (which delivers a $\mathcal{O}(1/\sqrt{\varepsilon})$ convergence time for the approximation), and sets the parameter to be $\mu = \mathcal{O}(\varepsilon)$, which gives an overall convergence time of $\mathcal{O}(1/\varepsilon)$. 
An important follow-up question is that whether or not such a smoothing technique can also be applied to solve (\ref{prob-1}-\ref{prob-2}) with the same  primal convergence time. This question is answered in subsequent works \cite{necoara2008application, li2016fast, tran2018smooth}, where they show that indeed one can also obtain an 
$\mathcal{O}(1/\varepsilon)$ primal convergence time for the problem (\ref{prob-1}-\ref{prob-2}) via smoothing. 


Combining the homotopy method with a smoothing technique to solve problems of the form \eqref{eq:nesterov} has been considered by a series of works including \cite{yang2015rsg}, \cite{xu2016homotopy} and \cite{xu2017admm}. Specifically, the works \cite{yang2015rsg} and \cite{xu2016homotopy} consider a multi-stage algorithm which starts from a large smoothing parameter $\mu$ and then decreases this parameter over time. They show that when the function $\Psi(\mf x)$ satisfies a local error bound with parameter $\beta\in(0,1]$, such a combination gives an improved convergence time of $\mathcal{O}(\log(1/\varepsilon) / \varepsilon^{1-\beta})$ minimizing the unconstrained problem \eqref{eq:nesterov}. The work \cite{xu2017admm} shows that the homotopy method can also be combined with ADMM to achieve a faster convergence solving problems of the form 
\[
\min_{\mf x\in\Omega_1} f(\mf x) + \psi(\mf A\mf x - \mf b),
\]
where $\Omega_1$ is a closed convex set, 
$f,\psi$ are both convex functions with $f(\mf x) + \psi(\mf A\mf x - \mf b)$ satisfying the local error bound, and the proximal operator of $\psi(\cdot)$ can be easily computed. However, due to the restrictions on the function $\psi$ in the paper, it \textit{cannot} be extended to handle problems of the form (\ref{prob-1}-\ref{prob-2}).\footnote{The result in \cite{xu2017admm} heavily depends on the assumption that the subgradient of $\psi(\cdot)$ is defined everywhere over the set $\Omega_1$ and uniformly bound by some constant $\rho$, which excludes the choice of indicator functions necessary to deal with constraints in the ADMM framework.}

\textbf{Contributions:} In the current work, we show a multi-stage homotopy smoothing method enjoys a primal convergence time $\mathcal{O}\l(\varepsilon^{-2/(2+\beta)}\log_2(\varepsilon^{-1})\r)$ solving (\ref{prob-1}-\ref{prob-2}) when the dual function satisfies a local error bound condition with $\beta\in(0,1]$. Our convergence time to achieve within 
$\varepsilon$ of optimality is in terms of \emph{number of  (unconstrained) maximization steps} 
$\arg\max_{x \in \mathcal{X}} [\lambda^T (\mathbf{Ax}-\mathbf{b}) - f(\mathbf{x}) - \frac{\mu}{2}||\mathbf{x}-\widetilde{\mathbf{x}}||^2 ]$,
where constants $\lambda, \mathbf{A}, \widetilde{\mathbf{x}}, \mu$ are known, which is a standard measure of convergence time for Lagrangian-type algorithms that turn a constrained problem into a sequence of unconstrained problems.
The algorithm essentially restarts a weighted primal averaging process at each stage using the last Lagrange multiplier computed.
 This result improves upon the earlier $\mathcal{O}(1/\varepsilon)$ result by (\cite{necoara2008application, li2016fast}) and at the same time extends the scope of homotopy smoothing method to solve a new class of problems involving constraints (\ref{prob-1}-\ref{prob-2}). It is worth mentioning that a similar restarted smoothing strategy is proposed in a recent work \cite{tran2018smooth} to solve problems including (\ref{prob-1}-\ref{prob-2}), where they show that, empirically, restarting the algorithm from the Lagrange multiplier computed from the last stage improves the convergence time. Here, we give one theoretical justification of such an improvement.

\subsection{The distributed geometric median problem}


The geometric median problem, also known as  the Fermat-Weber problem, has a long history (e.g. see \cite{weiszfeld2009point} for more details). Given a set of $n$ points $\mf b_1,~\mf b_2,~\cdots,~\mf b_n\in\mb{R}^d$, we aim to find one point $\mf{x}^*\in\mb R^d$ so as to minimize the sum of the Euclidean distance, i.e.
\begin{equation}\label{eq:geo-median}
\mf x^* \in \argmin_{\mf x\in\mb R^d}\sum_{i=1}^n\|\mf x - \mf b_i\|,
\end{equation}
which is a \textit{non-smooth} convex optimization problem.
It can be shown that the solution to this problem is \textit{unique} as long as $\mf b_1,~\mf b_2,~\cdots,~\mf b_n\in\mb{R}^d$ are not co-linear.
Linear convergence time algorithms solving \eqref{eq:geo-median} have also been developed in several works (e.g. \cite{xue1997efficient}, \cite{parrilo2003minimizing}, \cite{cohen2016geometric}). Our motivation of studying this problem is driven by its recent application in 
distributed statistical estimation, in which data are assumed to be randomly spreaded to multiple connected computational agents that produce intermediate estimators, and then, these intermediate estimators are aggregated in order to compute some statistics of the whole data set. Arguably one of the most widely used aggregation procedures is computing the \textit{geometric median} of the local estimators (see, for example,  \cite{duchi2014optimality}, \cite{minsker2014robust}, \cite{minsker2017distributed}, \cite{yin2018byzantine}).
It can be shown that the geometric median is robust against arbitrary corruptions of local estimators in the sense that the final estimator is stable as long as at least half of the nodes in the system perform as expected.

\textbf{Contributions:}
As an example application of our general algorithm, we look at the problem of computing the solution to \eqref{eq:geo-median} in a distributed scenario over a network of $n$ agents without any central controller, where each agent holds a local vector   
$\mathbf{b}_i$. Remarkably, we show theoretically that such a problem, when formulated as (\ref{prob-1}-\ref{prob-2}), has its dual function \textit{non-smooth but locally quadratic}. Therefore, applying our proposed primal-dual homotopy smoothing method gives a convergence time of $\mathcal{O}\l(\varepsilon^{-4/5}\log_2(\varepsilon^{-1})\r)$. This result improves upon the performance bounds of the previously known decentralized optimization algorithms (e.g. PG-EXTRA \cite{shi2015proximal} and decentralized ADMM \cite{shi2014linear}), which do not take into account the special structure of the problem and only obtain a convergence time of $\mathcal{O}\l(1/\varepsilon\r)$. Simulation experiments also demonstrate the superior ergodic convergence time of our algorithm compared to other algorithms.


\section{Primal-dual Homotopy Smoothing}
\subsection{Preliminaries}
The Lagrange dual function of (\ref{prob-1}-\ref{prob-2}) is defined as follows:\footnote{Usually, the Lagrange dual is defined as $\min_{\mf x\in\mc X}~ \dotp{\lambda}{ \mf A \mf x - \mf b} + f(\mf x)$. Here, we flip the sign and take the maximum for no reason other than being consistent with the form \eqref{eq:dual-form}.}
\begin{equation}\label{eq:dual-function}
F(\lambda) := \max_{\mf x\in\mc X} ~~\l\{- \dotp{\lambda}{ \mf A \mf x - \mf b} - f(\mf x)\r\},
\end{equation}
where $\lambda\in\mb R^N$ is the dual variable, $\mc X$ is a compact convex set and the minimum of the dual function is
$
F^* := \min_{\lambda\in\mb R^N} F(\lambda).
$
For any closed set $\mc K\subseteq\mb R^d$ and $\mf x\in\mb R^d$, define the distance function of $\mf x$ to the set $\mc K$ as
\[
\text{dist}(\mf x, \mc K) := \min_{\mf y\in\mc K}\|\mf x- \mf y\|,
\]
where $\|\mf x\|:= \sqrt{\sum_{i=1}^dx_i^2}$. For a convex function $F(\lambda)$, the $\delta$-sublevel set $\mc S_\delta$ is defined as
\begin{equation}\label{eq:sublevel}
\mc S_\delta := \{\lambda\in\mb R^N:~F(\lambda) - F^*\leq \delta\}.
\end{equation}
Furthermore, for any matrix $\mf{A}\in\mb R^{N\times d}$, we use $\sigma_{\max}(\mf A^T\mf A)$ to denote the largest eigenvalue of $\mf A^T\mf A$. Let 
\begin{equation}\label{eq:optimal-set-1}
\Lambda^*:= \l\{\lambda^*\in\mb R^N:~F(\lambda^*)\leq F(\lambda),~\forall \lambda\in\mb R^N\r\}
\end{equation}
 be the set of optimal Lagrange multipliers. Note that if 
the constraint $\mf A\mf x=\mf b$ is feasible,  then 
$\lambda^*\in \Lambda^*$ implies  $\lambda^* + \mf v\in \Lambda^*$ for any $\mf v$ that satisfies $\mf A^T\mf v = 0$. 
 The following definition introduces the notion of local error bound.
\begin{definition}\label{def:local-error}
Let $F(\lambda)$ be a convex function over $\lambda\in\mb R^N$.
Suppose $\Lambda^*$ is non-empty. The function $F(\lambda)$ is said to satisfy the local error bound with parameter $\beta\in(0,1]$ if $\exists \delta >0$ such that for any $\lambda\in\mc S_\delta$,
\begin{equation}\label{eq:local-error-bound}
\text{dist}(\lambda,\Lambda^*)\leq C_\delta (F(\lambda) - F^*)^\beta,
\end{equation}
where $C_\delta$ is a positive constant possibly depending on $\delta$. In particular, when $\beta = 1/2$, $F(\lambda)$ is said to be locally quadratic and when $\beta = 1$, it is said to be locally linear.
\end{definition}
\begin{remark}
Indeed, a wide range of popular optimization problems satisfy the local error bound condition.
The work \cite{tseng2010approximation} shows that if $\mathcal{X}$ is a polyhedron,  $f(\cdot)$ has Lipschitz continuous gradient and is strongly convex, then the dual function of (\ref{prob-1}-\ref{prob-2}) is locally linear. The work \cite{burke1996unified} shows that when the objective is linear and $\mathcal{X}$ is a convex cone, the dual function is also locally linear. The values of $\beta$ have also been computed for several other problems (e.g. \cite{Pang1997, yang2015rsg}).
\end{remark}

\begin{definition}
Given an accuracy level $\varepsilon >0$, a vector $\mf x_0\in\mc X$ is said to achieve an $\varepsilon$ approximate solution regarding problem (\ref{prob-1}-\ref{prob-2}) if
\[
f(\mf x_0) - f^*\leq \mathcal{O}(\varepsilon),~~\|\mf A \mf x_0 - \mf b\|\leq \mc{O}(\varepsilon),
\]
where $f^*$ is the optimal primal objective of (\ref{prob-1}-\ref{prob-2}).
\end{definition}


Throughout the paper, we adopt the following assumptions:
\begin{assumption}\label{assumption}~
(a) The feasible set $\{\mf x \in \mathcal{X}:~\mf A \mf x - \mf b = 0\}$ is nonempty and non-singleton.\\
(b) The set $\mc X$ is bounded, i.e.
$
\sup_{\mf x,\mf y\in\mc X}\|\mf x - \mf y\|\leq D,
$
for some positive constant $D$. Furthermore, the function $f(\mf x)$ is also bounded, i.e. 
$
\max_{\mf x\in \mc X}|f(\mf x)|\leq M,
$
for some positive constant $M$.\\
(c) The dual function defined in \eqref{eq:dual-function} satisfies the local error bound for some parameter $\beta \in(0,1]$ and some level $\delta>0$.\\
(d) Let $\mc P_{\mf A}$ be the projection operator onto the column space of $\mf A$. 
There exists a unique vector $\nu^*\in\mb R^N$ such that for any $\lambda^*\in\Lambda^*$, $\mc P_{\mf A}\lambda^* = \nu^*$, i.e. 
$\Lambda^* = \l\{ \lambda^*\in\mb R^N:  \mc P_{\mf A}\lambda^* = \nu^*\r\}$.
\end{assumption}
Note that assumption (a) and (b) are  very mild and quite standard. For most applications, it is enough to check (c) and (d).
We will show, for example, in Section \ref{sec:geo-median} that the distributed geometric median problem satisfies all the assumptions. 
Finally, we say a function $g:\mc X\rightarrow \mb R$ is smooth with modulus $L>0$ if
\[
\|\nabla g(\mf x)  - \nabla g(\mf y)\| \leq L\|\mf x - \mf y\|,~\forall \mf x, \mf y\in\mc X.
\]

\subsection{Primal-dual homotopy smoothing algorithm}
This section introduces our proposed algorithm for optimization problem (\ref{prob-1}-\ref{prob-2}) satisfying Assumption \ref{assumption}. The idea of smoothing is to introduce a smoothed Lagrange dual function $F_\mu(\lambda)$ that approximates the original possibly non-smooth dual function 
$F(\lambda)$ defined in \eqref{eq:dual-function}. 

For any constant $\mu > 0$,
define 
\begin{equation}\label{eq:primal-mu}
f_{\mu}(\mf x) = f(\mf x) + \frac\mu2\|\mf x - \wt{\mf x}\|^2,
\end{equation}
where $\wt{\mf x}$ is an arbitrary fixed point in $\mc X$. For simplicity of notation, we drop the dependency on $\wt{\mf x}$ in the definition of $f_{\mu}(\mf x)$.
Then, by the boundedness assumption of $\mc X$, we have 
$
f(\mf x)\leq f_{\mu}(\mf x) \leq f(\mf x) + \frac\mu2D^2,~~\forall \mf x\in\mc X.
$
For any $\lambda\in\mb R^N$, define
\begin{equation}\label{eq:dual-function-2}
F_\mu(\lambda) = \max_{\mf x\in\mc X} - \dotp{\lambda}{\mf A \mf x - \mf b}  - f_{\mu}(\mf x)
\end{equation}
as the smoothed dual function. The fact that $F_\mu(\lambda)$ is indeed smooth with modulus $\mu$ follows from Lemma \ref{lem:smooth} in the Supplement.
Thus, one is able to apply an accelerated gradient descent algorithm on this modified Lagrange dual function, which is detailed in Algorithm 1 below, starting from an initial primal-dual pair $(\wt{\mf x},\wt{\mf\lambda})\in\mb R^d\times\mb R^N$.

\begin{algorithm}\label{alg} 
\caption{Primal-Dual Smoothing: PDS$\l(\widetilde{\lambda},\wt{\mf x}, \mu,T\r)$}
\label{alg:new-alg}
Let $\lambda_0 = \lambda_{-1} = \widetilde{\lambda}$ and $\theta_0 = \theta_{-1} = 1$. \\
\textbf{For} $t = 0$ to $T-1$ \textbf{do}
\begin{itemize}
\item Compute a tentative dual multiplier:
$
\widehat{\lambda}_t = \lambda_t + \theta_t(\theta_{t-1}^{-1} - 1)(\lambda_t - \lambda_{t-1}),
$
\item Compute the primal update:
$
\mf x(\widehat \lambda_t) = \text{argmax}_{\mf x\in\mc X}~ - \dotp{\widehat\lambda_t}{\mf A \mf x - \mf b} - f(\mf x) - \frac\mu2\|\mf x - \wt{\mf x}\|^2.
$
\item Compute the dual update:
$
\lambda_{t+1} = \widehat \lambda_t + \mu(\mf A \mf x(\widehat \lambda_t) - \mf b). 
$
\item Update the stepsize: $\theta_{t+1} = \frac{\sqrt{\theta_t^4 + 4\theta_t^2} - \theta_t^2}{2}$.
\end{itemize}
\textbf{end for}\\
\textbf{Output:} $\overline{\mf x}_T = \frac{1}{S_T}\sum_{t= 0}^{T-1}\frac{1}{\theta_t}\mf x(\widehat \lambda_t)$ and $\lambda_T$, where 
$S_T=\sum_{t= 0}^{T-1}\frac{1}{\theta_t}$.
\end{algorithm}
Our proposed algorithm runs Algorithm 1 in multiple stages, which is detailed in Algorithm 2 below.

\begin{algorithm}\label{alg-2} 
\caption{Homotopy Method:}
\label{alg:new-alg-2}
Let $\varepsilon_0$ be a fixed constant and $\varepsilon < \varepsilon_0$ be the desired accuracy. 
Set $\mu_0 = \frac{\varepsilon_0}{D^2}$, $\lambda^{(0)} = 0$, $\overline{\mf x}^{(0)}\in\mc X$, the number of stages $K\geq \lceil \log_2(\varepsilon_0/\varepsilon) \rceil + 1$, and the time horizon during each stage $T\geq1$.\\
\textbf{For} $k = 1$ to $K$ \textbf{do}
\begin{itemize}
\item Let $\mu_{k} = \mu_{k-1}/2$.
\item Run the primal-dual smoothing algorithm ($\lambda^{(k)}$, $\overline{\mf x}^{(k)}$) = PDS$\l(\lambda^{(k-1)},\overline{\mf x}^{(k-1)},\mu_k,T\r)$.
\end{itemize}
\textbf{end for}\\
\textbf{Output:} $\overline{\mf x}^{(K)}$.
\end{algorithm}

\section{Convergence Time Results}
We start by defining the set of optimal Lagrange multipliers for the smoothed problem:\footnote{By Assumption \ref{assumption}(a) and Farkas' Lemma, this is non-empty.}
\begin{equation}\label{eq:optimal-set-2}
\Lambda_\mu^*:= \l\{ \lambda_\mu^*\in\mb R^N: F_\mu(\lambda_\mu^*)\leq F_\mu(\lambda),~\forall \lambda\in\mb R^N \r\}
\end{equation}
Our convergence time analysis involves two steps. The first step is to derive a primal convergence time bound for Algorithm 1, which involves the location information of the initial Lagrange multiplier at the beginning of this stage. The details are given in Supplement \ref{proof-each-stage}.
\begin{theorem}\label{main-thm-1}
Suppose Assumption \ref{assumption}(a)(b) holds. 
For any $T\geq1$ and any initial vector $(\wt{\mf x},\wt{\mf\lambda})\in\mb R^d\times\mb R^N$, we have the following performance bound regarding Algorithm 1, 
\begin{align}
&f\l( \overline{\mf x}_T \r) - f^*\leq \|\mc P_{\mf A}\wt\lambda^*\|\cdot\|\mf A\overline{\mf x}_T-\mf b\| 
+ \frac{\sigma_{\max}(\mf A^T\mf A)}{2\mu S_T}\l\|\wt\lambda^*- \wt{\lambda}\r\|^2 + \frac{\mu D^2}{2}, \label{eq:objective}\\
&\|\mf A\overline{\mf x}_T-\mf b\| \leq \frac{2\sigma_{\max}(\mf A^T\mf A)}{\mu S_T}\l(\l\|\wt\lambda^*- \wt{\lambda}\r\| + \text{dist}(\lambda_\mu^*,\Lambda^*)\r),
\label{eq:constraint}
\end{align}
where $\wt\lambda^* \in \text{argmin}_{\lambda^*\in\Lambda^*}\|\lambda^* - \wt\lambda\|$, 
$\overline{\mf x}_T:= \frac{1}{S_T}\sum_{t=0}^{T-1}\frac{\mf x(\widehat{\lambda}_t)}{\theta_t}$, $S_T = \sum_{t=0}^{T-1}\frac{1}{\theta_t}$ and 
$\lambda_\mu^*$ is any point in $\Lambda_\mu^*$ defined in \eqref{eq:optimal-set-2}.
\end{theorem}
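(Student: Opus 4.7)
The plan is to apply the classical accelerated gradient analysis to the smoothed dual $F_\mu$, which is smooth with modulus $L_\mu = \sigma_{\max}(\mf A^T\mf A)/\mu$, and then translate the resulting dual-side inequality into the two primal bounds by exploiting the byproduct iterates $\mf x(\wh\lambda_t)$ together with Jensen's inequality.

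First, I would invoke the Tseng/Nesterov weighted analysis of the FISTA-type iteration driven by $\{1/\theta_t\}$ with $S_T = \sum_{t=0}^{T-1} 1/\theta_t$, yielding for any $\lambda\in\mb R^N$
\[
S_T F_\mu(\lambda_T) \leq \sum_{t=0}^{T-1}\frac{1}{\theta_t}\l[F_\mu(\wh\lambda_t) + \dotp{\nabla F_\mu(\wh\lambda_t)}{\lambda - \wh\lambda_t}\r] + \frac{L_\mu}{2}\|\wt\lambda - \lambda\|^2.
\]
Substituting the Danskin-type identities $F_\mu(\wh\lambda_t) = -\dotp{\wh\lambda_t}{\mf A\mf x(\wh\lambda_t) - \mf b} - f_\mu(\mf x(\wh\lambda_t))$ and $\nabla F_\mu(\wh\lambda_t) = -(\mf A\mf x(\wh\lambda_t) - \mf b)$ collapses the bracket to $-\dotp{\lambda}{\mf A\mf x(\wh\lambda_t) - \mf b} - f_\mu(\mf x(\wh\lambda_t))$, and Jensen's inequality on the convex $f_\mu$ applied to $\overline{\mf x}_T = \frac{1}{S_T}\sum_t\frac{1}{\theta_t}\mf x(\wh\lambda_t)$ delivers the \emph{key inequality}
\[
F_\mu(\lambda_T) + f_\mu(\overline{\mf x}_T) + \dotp{\lambda}{\mf A\overline{\mf x}_T - \mf b} \leq \frac{L_\mu}{2 S_T}\|\wt\lambda - \lambda\|^2 \qquad \forall\lambda\in\mb R^N.
\]
For \eqref{eq:objective}, I would set $\lambda = \wt\lambda^*$; combining $F_\mu(\lambda_T) \geq F_\mu^* \geq -f^* - \mu D^2/2$ (the latter via $f_\mu \leq f + \mu D^2/2$ on $\mc X$ and strong duality for the smoothed problem) with $f \leq f_\mu$ gives a bound on $f(\overline{\mf x}_T) - f^*$ whose only remaining nuisance term is $-\dotp{\wt\lambda^*}{\mf A\overline{\mf x}_T - \mf b}$; since $\mf A\overline{\mf x}_T - \mf b \in \text{col}(\mf A)$, Cauchy-Schwarz replaces $\wt\lambda^*$ by its projection $\mc P_{\mf A}\wt\lambda^*$ and closes the bound.

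For \eqref{eq:constraint}, I would set $\lambda = \lambda_\mu^* + \rho\mf v$ with $\mf v = (\mf A\overline{\mf x}_T - \mf b)/\|\mf A\overline{\mf x}_T - \mf b\|$ and $\rho > 0$, then optimize over $\rho$. Two applications of the definition of $F_\mu$, namely $F_\mu(\lambda_T) \geq F_\mu^*$ and $f_\mu(\overline{\mf x}_T) + \dotp{\lambda_\mu^*}{\mf A\overline{\mf x}_T - \mf b} \geq -F_\mu^*$, lower-bound the LHS of the key inequality by $\rho\|\mf A\overline{\mf x}_T - \mf b\|$, and minimizing $(a+\rho)^2/\rho$ at $\rho = a := \|\wt\lambda - \lambda_\mu^*\|$ produces $\|\mf A\overline{\mf x}_T - \mf b\| \leq \frac{2L_\mu}{S_T}\|\wt\lambda - \lambda_\mu^*\|$. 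The main obstacle is then upgrading $\|\wt\lambda - \lambda_\mu^*\|$ to $\|\wt\lambda - \wt\lambda^*\| + \text{dist}(\lambda_\mu^*, \Lambda^*)$, which is exactly where Assumption~\ref{assumption}(d) is essential: it forces $\Lambda^*$ to be the affine subspace $\nu^* + \ker(\mc P_{\mf A})$, and a parallel argument (adding any $v\in\ker(\mc P_{\mf A})$ leaves the inner maximization defining $F_\mu$ invariant) shows $\Lambda_\mu^*$ is a parallel translate of the same subspace. Choosing $\lambda_\mu^*$ to be the projection of $\wt\lambda$ onto $\Lambda_\mu^*$ and decomposing $\wt\lambda - \lambda_\mu^*$ orthogonally along $\text{col}(\mf A)$ and $\ker(\mc P_{\mf A})$ produces the required triangle-style inequality, and the constraint bound follows.
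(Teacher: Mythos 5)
Your derivation of the key weighted inequality and your proof of the objective bound \eqref{eq:objective} follow essentially the same route as the paper: Tseng's per-iteration bound telescoped with weights $1/\theta_t$ (using $1/\theta_{T-1}^2=S_T$), Danskin's identities to collapse the linearization, Jensen's inequality for $f_\mu$, then the choice $\lambda=\wt\lambda^*$, the perturbation bound $-F_\mu^*\leq f^*+\mu D^2/2$, and Cauchy--Schwarz after observing that $\mf A\overline{\mf x}_T-\mf b$ lies in the column space of $\mf A$ (this fact, and the strong duality $F_\mu^*=-f_\mu^*$, is where Assumption \ref{assumption}(a) enters). For the constraint bound your endgame differs mildly from the paper's: you test the key inequality at $\lambda=\lambda_\mu^*+\rho\mf v$ and optimize over $\rho$, whereas the paper maximizes the quadratic in $\lambda$ exactly and keeps the projection $\mc P_{\mf A}(\wt\lambda-\lambda_\mu^*)$ inside the Cauchy--Schwarz step; both yield the factor $2\sigma_{\max}(\mf A^T\mf A)/(\mu S_T)$.

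The one step that does not hold as stated is your claim that $\Lambda_\mu^*$ is a parallel translate of $\ker(\mc P_{\mf A})$. The invariance you invoke (adding $\mf v$ with $\mf A^T\mf v=0$ leaves $F_\mu$ unchanged, which itself uses $\mf b\in\mathrm{col}(\mf A)$, i.e.\ feasibility) only shows that $\Lambda_\mu^*$ is a union of such translates; nothing forces $\mc P_{\mf A}(\Lambda_\mu^*)$ to be a single point, since no analogue of Assumption \ref{assumption}(d) is assumed for the smoothed dual. Relatedly, by fixing $\lambda_\mu^*$ to be the projection of $\wt\lambda$ onto $\Lambda_\mu^*$ you obtain the bound only for that particular multiplier, whereas the theorem asserts it for an arbitrary $\lambda_\mu^*\in\Lambda_\mu^*$, and without the translate claim $\mathrm{dist}(\lambda_\mu^*,\Lambda^*)$ may vary over $\Lambda_\mu^*$. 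The repair stays entirely within your framework: for an arbitrary $\lambda_\mu^*$, replace it by $\lambda_\mu':=\mc P_{\mf A}\lambda_\mu^*+(\mf I-\mc P_{\mf A})\wt\lambda$, which belongs to $\Lambda_\mu^*$ by the invariance; then $\|\wt\lambda-\lambda_\mu'\|=\|\mc P_{\mf A}(\wt\lambda-\lambda_\mu^*)\|\leq \|\mc P_{\mf A}(\wt\lambda-\wt\lambda^*)\|+\|\mc P_{\mf A}(\wt\lambda^*-\lambda_\mu^*)\|\leq \|\wt\lambda-\wt\lambda^*\|+\mathrm{dist}(\lambda_\mu^*,\Lambda^*)$, where the last step uses non-expansiveness of the projection and Assumption \ref{assumption}(d) ($\mc P_{\mf A}\lambda^*=\nu^*$ for every $\lambda^*\in\Lambda^*$) --- exactly the projection argument the paper uses. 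With this patch your argument is complete and matches the stated constants.
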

An inductive argument shows that $\theta_t\leq 2/(t+2)~\forall t\geq0$. Thus, Theorem \ref{main-thm-1} already gives an $\mathcal{O}(1/\varepsilon)$ convergence time by setting $\mu = \varepsilon$ and $T=1/\varepsilon$. Note that this is the best trade-off we can get from Theorem \ref{main-thm-1} when simply bounding the terms $\|\wt\lambda^*- \wt{\lambda}\|$ and
$\text{dist}(\lambda_\mu^*,\Lambda^*)$ by constants. 
To see how this bound leads to an improved convergence time when running in multiple rounds, suppose 
the computation from the last round gives a 
$\wt\lambda$ that is close enough to the optimal set $\Lambda^*$, then, $\|\wt\lambda^*- \wt{\lambda}\|$ would be small. When the local error bound condition holds, one can show that $\text{dist}(\lambda_\mu^*,\Lambda^*)\leq \mathcal{O}(\mu^\beta)$.
As a consequence, one is able to choose $\mu$ smaller than $\varepsilon$ and get a better trade-off. Formally, we have the following overall performance bound. The proof is given in Supplement \ref{proof-overall}.
\begin{theorem}\label{main-thm-2}
Suppose Suppose Assumption \ref{assumption} holds,  $\varepsilon_0\geq\max\{ 2M,1\}$, $0<\varepsilon\leq \min\{\delta/2,2M,1\}$,  $T\geq\frac{2DC_\delta\sqrt{\sigma_{\max}(\mf A^T\mf A)}(2M)^{\beta/2}}{\varepsilon^{2/(2+\beta)}}$. 
The proposed homotopy method achieves the following objective and constraint violation bound:
\begin{align*}
&f(\overline{\mf x}^{(K)}) - f^*\leq \l(\frac{24\|\mc P_{\mf A}\lambda_*\|(1+C_\delta)}{C_\delta^2(2M)^{2\beta}} + \frac{6}{C_\delta^2(2M)^{2\beta}}+\frac14\r)\varepsilon,\\
&\|\mf A \overline{\mf x}^{(K)} - \mf b\|\leq \frac{24(1+C_\delta)}{C_\delta^2(2M)^{\beta}}\varepsilon,
\end{align*}
with running time $\frac{2DC_\delta\sqrt{\sigma_{\max}(\mf A^T\mf A)}(2M)^{\beta/2}}{\varepsilon^{2/(2+\beta)}}(\lceil \log_2(\varepsilon_0/\varepsilon) \rceil + 1)$, i.e. the algorithm achieves an $\varepsilon$ approximation with convergence time 
$\mathcal{O}\l(\varepsilon^{-2/(2+\beta)}\log_2(\varepsilon^{-1})\r)$.
\end{theorem}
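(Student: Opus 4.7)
The plan is to prove Theorem \ref{main-thm-2} by induction over the $K$ stages of Algorithm 2, maintaining an upper bound on $d_k:=\text{dist}(\lambda^{(k)},\Lambda^*)$, and then reading off the final primal and feasibility bounds by one more application of Theorem \ref{main-thm-1} at stage $K$. Since Theorem \ref{main-thm-1} already provides per-stage objective and feasibility bounds in terms of $\|\wt\lambda^*-\lambda^{(k-1)}\|=d_{k-1}$ and $\text{dist}(\lambda_{\mu_k}^*,\Lambda^*)$, two extra ingredients are needed: control of $\text{dist}(\lambda_{\mu_k}^*,\Lambda^*)$, and a recursion that carries $d_{k-1}$ into $d_k$.

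For the first, the pointwise sandwich $F(\lambda)-\mu D^2/2\leq F_\mu(\lambda)\leq F(\lambda)$ (which follows directly from $f\leq f_\mu\leq f+\mu D^2/2$) gives $F(\lambda_\mu^*)-F^*\leq\mu D^2/2$, so Assumption \ref{assumption}(c) yields $\text{dist}(\lambda_\mu^*,\Lambda^*)\leq C_\delta(\mu D^2/2)^\beta$ whenever $\mu D^2/2\leq\delta$. For the second, the accelerated-gradient argument underlying Theorem \ref{main-thm-1} also produces a dual bound $F_{\mu_k}(\lambda^{(k)})-F_{\mu_k}^*\leq \tfrac{2\sigma_{\max}(\mf A^T\mf A)}{\mu_k S_T}\|\lambda^{(k-1)}-\lambda_{\mu_k}^*\|^2$, which combined with the sandwich yields
\[
F(\lambda^{(k)})-F^* \leq \tfrac{2\sigma_{\max}(\mf A^T\mf A)}{\mu_k S_T}\|\lambda^{(k-1)}-\lambda_{\mu_k}^*\|^2 + \tfrac{\mu_k D^2}{2}.
\]
Assumption \ref{assumption}(d), applied to both $\Lambda^*$ and $\Lambda_{\mu_k}^*$ (each a translate of $\ker(\mf A^T)$), lets me choose $\lambda_{\mu_k}^*\in\Lambda_{\mu_k}^*$ that agrees with $\lambda^{(k-1)}$ off the column space of $\mf A$, so $\|\lambda^{(k-1)}-\lambda_{\mu_k}^*\|\leq d_{k-1}+C_\delta(\mu_k D^2/2)^\beta$; one more application of the local error bound to $\lambda^{(k)}$ then gives the recursion
\[
d_k \leq C_\delta\Bigl[\tfrac{2\sigma_{\max}(\mf A^T\mf A)}{\mu_k S_T}\bigl(d_{k-1}+C_\delta(\mu_k D^2/2)^\beta\bigr)^2 + \tfrac{\mu_k D^2}{2}\Bigr]^\beta.
\]

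I would run induction with hypothesis $d_k\leq c\mu_k^\beta$ for a constant $c=c(C_\delta,D,\beta,\sigma_{\max}(\mf A^T \mf A))$. The base case follows from $F(0)-F^*\leq 2M$, Assumption \ref{assumption}(c), and the condition $\varepsilon_0\geq 2M$, which jointly ensure $d_0\leq C_\delta(2M)^\beta\leq c\mu_0^\beta$ for $c$ chosen large enough. The inductive step substitutes $d_{k-1}\leq c(2\mu_k)^\beta$ into the recursion and reduces to an inequality of the form $\text{(const)}\cdot\mu_k^{2\beta-2}/T^2+\text{const}\leq\text{const}$, which the hypothesized $T$ validates uniformly in $k$. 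A final application of Theorem \ref{main-thm-1} at stage $K$ with $\mu_K\leq\varepsilon/(2D^2)$, together with $\|\mc P_{\mf A}\wt\lambda^*\|=\|\nu^*\|$ from Assumption \ref{assumption}(d), then yields the stated $\mc O(\varepsilon)$ primal and feasibility bounds with the explicit constants. The delicate part will be the uniform validity of the induction: for $\beta<1$ the coefficient $\mu_k^{2\beta-2}$ in the recursion grows as $\mu_k\to 0$, so $T$ must be dimensioned by the smallest $\mu_K\sim\varepsilon$, and balancing this against the final-stage feasibility target is what selects the exponent $2/(2+\beta)$ in the complexity; a secondary technicality is keeping every iterate inside the sublevel set $\mc S_\delta$ on which the local error bound holds, which motivates the conditions $\varepsilon\leq\delta/2$ and $\varepsilon_0\geq\max\{2M,1\}$ in the theorem.
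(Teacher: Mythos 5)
There is a genuine gap in the proposed induction, and it sits exactly where you dismiss it as a ``secondary technicality.'' Your induction tracks $d_k=\text{dist}(\lambda^{(k)},\Lambda^*)$ and closes the loop by applying the local error bound (Assumption \ref{assumption}(c)) to $\lambda^{(k)}$ (and to $\lambda_{\mu_k}^*$). But that bound is only valid on the sublevel set $\mc S_\delta$, and in the early stages nothing places the iterates there: the theorem allows $\varepsilon_0\geq 2M$ with no relation between $M$ and $\delta$, so the dual gap after stage $k$ is of order $\varepsilon_k=\varepsilon_0/2^k$, which exceeds $\delta$ until $k\approx\log_2(\varepsilon_0/\delta)$; likewise $\text{dist}(\lambda_{\mu_k}^*,\Lambda^*)\leq C_\delta(\mu_kD^2/2)^\beta$ needs $\mu_kD^2/2=\varepsilon_k/2\leq\delta$, which also fails early on. Your base case is already symptomatic: ``$d_0\leq C_\delta(2M)^\beta$ from $F(0)-F^*\leq 2M$ and Assumption (c)'' is unjustified when $2M>\delta$, and the hypotheses $\varepsilon\leq\delta/2$, $\varepsilon_0\geq\max\{2M,1\}$ do not repair this --- they constrain the target accuracy and the initial smoothing level, not the location of the transient iterates. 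Since the error bound is the only tool available for converting a dual gap into a distance to $\Lambda^*$, the recursion $d_k\leq c\mu_k^\beta$ cannot be propagated through the early stages as written, and no alternative control of $d_k$ (e.g.\ boundedness of the accelerated-gradient iterates relative to $\Lambda^*$, which does not follow from function-value bounds because $F$ is constant along $\mathcal N(\mf A^T)$) is supplied.

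The paper avoids this precisely by never measuring distance to $\Lambda^*$ during the transient phase. Its induction (Lemma \ref{lem:dual-convergence}) tracks the dual gap $F(\lambda^{(k)})-F^*\leq\varepsilon_k+\varepsilon$, and the key tool is Lemma \ref{lem:yang-lin}: the distance that enters the accelerated-gradient estimate is $\|\lambda^{(k-1)}-\lambda_\varepsilon^{(k-1)}\|$, the distance to the projection onto the $\varepsilon$-sublevel set $\mc S_\varepsilon$, bounded by $C_\delta\varepsilon_{k-1}/\varepsilon^{1-\beta}$; the local error bound is only ever applied to the point $\lambda_\varepsilon^{(k-1)}\in\mc S_\varepsilon\subseteq\mc S_\delta$, which is legitimate at every stage no matter how large the current gap is. Only in the final stages, once $\varepsilon_k+\varepsilon\leq\delta$, does the paper convert the gap into $\text{dist}(\lambda^{(k)},\Lambda^*)$ and $\text{dist}(\lambda_{\mu_{k+1}}^*,\Lambda^*)$ and plug these into Theorem \ref{main-thm-1} --- this last assembly is essentially what you propose, and your treatment of $\Lambda_{\mu_k}^*$ via the null-space invariance and the sandwich lemma is fine. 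So the late-stage part of your plan is sound, but to make the argument complete you need either the sublevel-set projection device of Lemma \ref{lem:yang-lin} (or some substitute giving stage-wise control valid outside $\mc S_\delta$), or an additional assumption forcing all iterates into $\mc S_\delta$, which the theorem does not grant.
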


\section{Distributed Geometric Median}\label{sec:geo-median}
Consider the problem of computing the geometric median over a connected network $(\mc V, \mc E)$, where $\mc V=\{1,2,\cdots,n\}$ is a set of $n$ nodes, $\mc E=\{e_{ij}\}_{i,j\in\mc V}$ is a collection of undirected edges, $e_{ij}=1$ if there exists an undirected edge between node $i$ and node $j$, and $e_{ij}=0$ otherwise. Furthermore, $e_{ii} = 1,~\forall i\in\{1,2,\cdots,n\}$.Furthermore, since the graph is undirected, we always have $e_{ij} = e_{ji},~\forall i,j\in\{1,2,\cdots,n\}$.
Two nodes $i$ and $j$ are said to be neighbors of each other if $e_{ij} = 1$. 
Each node $i$ holds a local vector 
$\mf b_i\in\mb R^d$, and the goal is to compute the solution to \eqref{eq:geo-median} without having a central controller, i.e. each node can only communicate with its neighbors. 

Computing geometric median over a network has been considered in several works previously and various distributed algorithms have been developed such as decentralized subgradient methd (DSM, \cite{nedic2009distributed,yuan2016convergence}), 
PG-EXTRA (\cite{shi2015proximal}) and ADMM (\cite{shi2014linear,deng2017parallel}). The best known convergence time for this problem is  $\mathcal{O}(1/\varepsilon)$. In this section, we will show that it can be written in the form of problem (\ref{prob-1}-\ref{prob-2}), has its Lagrange dual function \textit{locally quadratic} and optimal Lagrange multiplier unique up to the null space of $\mf A$, thereby satisfying Assumption \ref{assumption}.

Throughout this section, we assume that $n\geq3$, $\mf b_1,~\mf b_2,~\cdots,~\mf b_n\in\mb{R}^d$ are not co-linear and they are distinct (i.e. $\mf b_i \neq \mf b_j$ if $i\neq j$).
We start by defining a mixing matrix $\wt{\mf W}\in\mb R^{n\times n}$ with respect to this network. The mixing matrix will have the following properties:
\begin{enumerate}
\item Decentralization: The $(i,j)$-th entry 
$\wt w_{ij} = 0$ if $e_{ij} = 0$.
\item Symmetry: $\wt{\mf W} = \wt{\mf W}^T$.
\item The null space of $\mf I_{n\times n} -\wt{\mf W}$ satisfies $\mc N(\mf I_{n\times n} -\wt{\mf W})=\l\{c\mf 1,~c\in\mb R\r\}$, where $\mf 1$ is an all 1 vector in $\mb R^n$.
\end{enumerate}
These conditions are rather mild and satisfied by most doubly stochastic mixing matrices used in practice. Some specific examples are Markov transition matrices of max-degree chain and Metropolis-Hastings chain (see \cite{boyd2004fastest} for detailed discussions). 
Let $\mf x_i\in\mb R^d$ be the local variable on the node $i$. Define
\[
\mf x := \l[
\begin{tabular}{ l }
  $\mf x_1$ \\
  $\mf x_2$ \\
  $\vdots$ \\
  $\mf x_n$
\end{tabular}
\r]\in\mb R^{nd},~~
\mf b := \l[
\begin{tabular}{ l }
  $\mf b_1$ \\
  $\mf b_2$ \\
  $\vdots$ \\
  $\mf b_n$
\end{tabular}
\r]\in\mb R^{nd},~~
\mf A  = \l[
\begin{tabular}{lll}
  $\mf W_{11}$ & $\cdots$ & $\mf W_{1n}$\\
  $\vdots$ & $\ddots$   &  $\vdots$\\
  $\mf W_{n1}$ & $\cdots$ & $\mf W_{nn}$ 
\end{tabular}\r]\in\mb R^{(nd)\times(nd)},
\]
where 
$$\mf W_{ij} =
\begin{cases}
(1- \wt{w}_{ij})\mf I_{d\times d},~~&\text{if}~i=j\\
-\wt{w}_{ij}\mf I_{d\times d},~~&\text{if}~i\neq j
\end{cases},$$ 
and $\wt{w}_{ij}$ is $ij$-th entry of the mixing matrix $\widetilde{\mf W}$.
By the aforementioned null space property of the mixing matrix $\wt{\mf W}$, it is easy to see that the null space of the matrix $\mf A$ is
\begin{equation}\label{eq:null-space}
\mc N(\mf A) = \l\{ \mf u \in \mb R^{nd}:~\mf u = [\mf u_1^T,\cdots,\mf u_n^T]^T,~\mf u_1 = \mf u_2 = \cdots = \mf u_n \r\},
\end{equation}
Then, because of the null space property \eqref{eq:null-space}, one can equivalently write problem \eqref{eq:geo-median} in a ``distributed fashion'' as follows:
\begin{align}
\min&~~\sum_{i=1}^n\|\mf x_i - \mf b_i\|   \label{dec-prob-1}\\
s.t.&~~ \mf A\mf x = 0, \|\mf x_i - \mf b_i\|\leq D,~i=1,2,\cdots,n  \label{dec-prob-2},
\end{align}
where we set the constant $D$ to be large enough so that the solution belongs to the set $\mathcal{X}:=\l\{\mf x\in\mb R^{nd}: ~\|\mf x_i - \mf b_i\|\leq D,i=1,2,\cdots,n \r\}$. 
This is in the same form as (\ref{prob-1}-\ref{prob-2}) with $\mc X:= \{\mf x\in\mb R^{nd}:~\|\mf x_i - \mf b_i\|\leq D,~i=1,2,\cdots,n\}$.

\subsection{Distributed implementation}
In this section, we show how to implement the proposed algorithm to solve (\ref{dec-prob-1}-\ref{dec-prob-2}) in a distributed way. 
Let $\lambda_t = [\lambda_{t,1}^T,~\lambda_{t,2}^T,\cdots,~\lambda_{t,n}^T]\in\mb R^{nd}$, $\widehat{\lambda}_t = [\widehat{\lambda}_{t,1}^T,~\widehat{\lambda}_{t,2}^T,\cdots,~\widehat{\lambda}_{t,n}^T]\in\mb R^{nd}$ be the vectors of Lagrange multipliers  defined in Algorithm 1, where each $\lambda_{t,i},~\widehat{\lambda}_{t,i}\in\mb R^d$. Then, 
each agent $i\in\{1,2,\cdots,n\}$ in the network is responsible for updating the corresponding Lagrange multipliers $\lambda_{t,i}$ and $\widehat{\lambda}_{t,i}$ according to Algorithm 1, which has the initial values $\lambda_{0,i} = \lambda_{-1,i} = \widetilde{\lambda}_i$. Note that the first, third and fourth steps in Algorithm 1 are naturally separable regarding each agent. It remains to check if the second step can be implemented in a distributed way.

Note that in the second step, we obtain the primal update $\mf x(\widehat{\lambda}_t) = [\mf x_1(\widehat{\lambda}_t)^T,\cdots,\mf x_n(\widehat{\lambda}_t)^T]\in\mb R^{nd}$ by solving the following problem:
\[
\mf x(\widehat{\lambda}_t) = \text{argmax}_{\mf x:\|\mf x_i - \mf b_i\|\leq D,~i=1,2,\cdots,n}~-\dotp{\widehat{\lambda}_t}{\mathbf{A}\mathbf{x}} - \sum_{i=1}^n\l( \|\mathbf{x}_i - \mf b_i\| + \frac{\mu}{2}\|\mf x_i - \widetilde{\mf{x}}_i\|^2 \r),
\]
where $\widetilde{\mf{x}}_i\in\mb R^d$ is a fixed point in the feasible set. 
We separate the maximization according to different agent $i\in\{1,2,\cdots,n\}$:
\begin{align*}
\mf x_i(\widehat{\lambda}_t)
=&\text{argmax}_{\mf x_i:\|\mf x_i - \mf b_i\|\leq D}~-\sum_{j=1}^n\dotp{\wh\lambda_{t,j}}{\mf W_{ji}\mf x_i}- \|\mathbf{x}_i - \mf b_i\| - \frac{\mu}{2}\|\mf x_i - \widetilde{\mf{x}}_i\|^2.
\end{align*}
Note that according to the definition of $\mf W_{ji}$, it is equal to 0 if agent $j$ is not the neighbor of agent $i$. More specifically, Let $\mc N_i$ be the set of neighbors of agent $i$ (including the agent $i$ itself), then, the above maximization problem can be equivalently written as
\begin{align*}
&\text{argmax}_{\mf x_i:\|\mf x_i - \mf b_i\|\leq D} -\sum_{j\in\mc N_i} \dotp{\wh\lambda_{t,j}}{\mf W_{ji}\mf x_i}- \|\mathbf{x}_i - \mf b_i\| - \frac{\mu}{2}\|\mf x_i - \widetilde{\mf{x}}_i\|^2\\
=&\text{argmax}_{\mf x_i:\|\mf x_i - \mf b_i\|\leq D}  -\dotp{\sum_{j\in\mc N_i}\mf W_{ji}\wh\lambda_{t,j}}{\mf x_i}- \|\mathbf{x}_i - \mf b_i\| - \frac{\mu}{2}\|\mf x_i - \widetilde{\mf{x}}_i\|^2~~i\in\{1,2,\cdots,n\},
\end{align*}
where we used the fact that $\mf W_{ji}^T=\mf W_{ji}$.
Solving this problem only requires the local information from each agent. Completing the squares  gives 
\begin{equation}\label{eq:dec-sub}
\mf x_i(\widehat{\lambda}_t) = \text{argmax}_{\|\mf x_i - \mf b_i\|\leq D}-\frac{\mu}{2}\l\| \mf x_i - \l(\widetilde{\mf x}_i-\frac{1}{\mu}\sum_{j\in\mc N_i}\mf W_{ji}\wh\lambda_{t,j}  \r) \r\|^2 - \|\mf x_i - \mf b_i\|.
\end{equation}
The solution to such a subproblem has a closed form, as is shown in the following lemma (the proof is given in Supplement \ref{proof-explicit}):
\begin{lemma}\label{lem:sol-sub}
Let $\mf a_i = \widetilde{\mf x}_i-\frac{1}{\mu}\sum_{j\in\mc N_i}\mf W_{ji}\wh\lambda_{t,j}$, then, the solution to \eqref{eq:dec-sub} has the following closed form:
\[
\mf x_i(\widehat{\lambda}_t)=
\begin{cases}
\mf b_i,~~&\text{if}~~\|\mf b_i - \mf a_i\|\leq1/\mu,\\
\mf b_i - \frac{\mf b_i - \mf a_i}{\|\mf b_i - \mf a_i\|}\l(\|\mf b_i - \mf a_i\| - \frac{1}{\mu}\r),~~&\text{if}~~\frac{1}{\mu}< \|\mf b_i - \mf a_i\|\leq \frac{1}{\mu}+D,\\
 \mf b_i - \frac{\mf b_i - \mf a_i}{\|\mf b_i - \mf a_i\|}D,~~&\text{otherwise}.
\end{cases}
\]
\end{lemma}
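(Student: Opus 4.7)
The subproblem \eqref{eq:dec-sub} asks us to minimize $g(\mf x_i) = \frac{\mu}{2}\|\mf x_i - \mf a_i\|^2 + \|\mf x_i - \mf b_i\|$ over $\{\mf x_i:\|\mf x_i - \mf b_i\|\leq D\}$. The plan is to exploit the rotational symmetry of this objective around $\mf b_i$ to collapse it to a one-dimensional strictly convex problem, which can then be solved in closed form by case analysis on the location of its unconstrained minimizer. First I would change variables, setting $\mf y = \mf x_i - \mf b_i$ and $\mf c = \mf a_i - \mf b_i$, so the problem becomes minimizing $\tfrac{\mu}{2}\|\mf y - \mf c\|^2 + \|\mf y\|$ over $\{\mf y:\|\mf y\|\leq D\}$. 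The degenerate case $\mf c = 0$ is immediate: $\mf y = 0$ is optimal, giving $\mf x_i^* = \mf b_i$, which matches the first case of the stated formula since $\|\mf b_i - \mf a_i\|=0\leq 1/\mu$.

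For $\mf c\neq 0$ I would decompose $\mf y = s\hat{\mf c} + \mf y_\perp$ with $\hat{\mf c} = \mf c/\|\mf c\|$ and $\mf y_\perp\perp \mf c$. This gives
\[
\tfrac{\mu}{2}\|\mf y - \mf c\|^2 + \|\mf y\| = \tfrac{\mu}{2}\bigl((s - \|\mf c\|)^2 + \|\mf y_\perp\|^2\bigr) + \sqrt{s^2 + \|\mf y_\perp\|^2},
\]
which for any fixed $s$ is strictly decreased by setting $\mf y_\perp = 0$ whenever $\mf y_\perp\neq 0$. Hence the optimal $\mf y$ lies on the line through $0$ and $\mf c$, and the problem reduces to minimizing $\tfrac{\mu}{2}(s - \|\mf c\|)^2 + |s|$ over $s\in[-D,D]$. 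A derivative check on the branch $s\leq 0$ (where the derivative equals $\mu(s - \|\mf c\|) - 1 < 0$) shows the objective is strictly decreasing in $s$ there, so I can restrict to $s\in[0,D]$ and work with the strictly convex smooth function $h(s) = \tfrac{\mu}{2}(s - \|\mf c\|)^2 + s$.

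Solving $h'(s) = \mu(s - \|\mf c\|) + 1 = 0$ gives the unconstrained minimizer $s^\star = \|\mf c\| - 1/\mu$, and I would split into three cases according to whether $s^\star$ lies below, inside, or above $[0,D]$: (i) if $\|\mf c\|\leq 1/\mu$, then $h'\geq 0$ on $[0,D]$ and $s^* = 0$; (ii) if $1/\mu < \|\mf c\|\leq 1/\mu + D$, then $s^\star\in(0,D]$ and $s^* = \|\mf c\| - 1/\mu$; (iii) if $\|\mf c\| > 1/\mu + D$, then $h' < 0$ on $[0,D]$ and $s^* = D$. Mapping back via $\mf x_i^* = \mf b_i + s^*\hat{\mf c}$ and rewriting $\hat{\mf c} = (\mf a_i - \mf b_i)/\|\mf a_i - \mf b_i\| = -(\mf b_i - \mf a_i)/\|\mf b_i - \mf a_i\|$ recovers the three piecewise expressions in the statement — e.g.\ in case (ii), $\mf b_i + (\|\mf c\| - 1/\mu)\hat{\mf c}$ rearranges to $\mf b_i - \tfrac{\mf b_i - \mf a_i}{\|\mf b_i - \mf a_i\|}(\|\mf b_i - \mf a_i\| - 1/\mu)$. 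I do not foresee a real obstacle; once the rotational reduction is in place the remaining work is routine one-dimensional convex calculus, and the only step requiring a bit of care is the algebraic bookkeeping to check each case against the stated closed form.
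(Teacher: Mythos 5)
Your proof is correct, but it takes a genuinely different route from the paper's. The paper \emph{verifies} the stated formula rather than deriving it: it rewrites \eqref{eq:dec-sub} as an unconstrained strongly concave maximization by adding the indicator of the ball $C=\{\mf x_i:\|\mf x_i-\mf b_i\|\le D\}$, writes the optimality condition $0\in \mu(\mf x_i^*-\mf a_i)+\partial\|\mf x_i^*-\mf b_i\|+\mathcal N_C(\mf x_i^*)$, and in each of the three regimes exhibits an explicit subgradient $\mf h$ (and, in the third regime, an explicit element of the normal cone) certifying that the claimed point satisfies the inclusion; uniqueness of the maximizer from strong concavity then closes the argument. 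You instead derive the solution constructively: after translating by $\mf b_i$, the objective in $\mf y=\mf x_i-\mf b_i$ is radially symmetric about the origin, so the component orthogonal to $\mf c=\mf a_i-\mf b_i$ can be discarded (this strictly decreases both terms and preserves feasibility since $|s|\le\|\mf y\|\le D$), collapsing the problem to the one-dimensional strictly convex problem $\min_{s\in[0,D]}\frac{\mu}{2}(s-\|\mf c\|)^2+s$ after ruling out $s<0$; the three cases then correspond to where the unconstrained minimizer $\|\mf c\|-1/\mu$ sits relative to $[0,D]$, and mapping back gives exactly the stated formula (including the degenerate case $\mf c=0$). Your route avoids subdifferential and normal-cone calculus and explains where the closed form comes from, at the cost of relying on the rotational symmetry of the Euclidean norm; the paper's route is shorter once the candidate solution is guessed and is the more mechanical pattern to reuse when the penalty is not radially symmetric.
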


\subsection{Local error bound condition}
The proof of the this theorem is given in Supplement \ref{proof-local-geo}. 
\begin{theorem}\label{thm:geo-local-bound}
The Lagrange dual function of (\ref{dec-prob-1}-\ref{dec-prob-2})  is non-smooth and given by the following
\[
F(\lambda) = - \dotp{\mf A^T\lambda}{\mf b} + D\sum_{i=1}^n(\|\mf{A}_{[i]}^T\lambda\|-1)\cdot I\l(\|\mf{A}_{[i]}^T\lambda\|>1\r),
\]
where 
$\mf A_{[i]}= [\mf W_{1i}~\mf W_{2i}~\cdots~\mf W_{ni}]^T
$
is the $i$-th column block of the matrix $\mf A$, $I\l(\|\mf{A}_{[i]}^T\lambda\|>1\r)$ is the indicator function which takes 1 if $\|\mf{A}_{[i]}^T\lambda\|>1$ and 0 otherwise. 
 Let $\Lambda^*$ be the set of optimal Lagrange multipliers defined according to \eqref{eq:optimal-set-1}. Suppose $D\geq 2n\cdot \max_{i,j\in\mc V}\|\mf b_i - \mf b_j\|$, then, for any $\delta>0$, there exists a $C_\delta>0$ such that 
\[
\text{dist}(\lambda,\Lambda^*)\leq C_\delta(F(\lambda)-F^*)^{1/2},~\forall \lambda\in \mc S_\delta.
\]
Furthermore, there exists a unique vector $\nu^*\in\mb R^{nd}$ s.t. $\mc P_{\mf A}\lambda^* = \nu^*,~\forall \lambda^*\in\Lambda^*$, i.e. Assumption \ref{assumption}(d) holds. Thus, applying the proposed method gives the convergence time $\mc O\l(\varepsilon^{-4/5}\log_2(\varepsilon^{-1})\r)$.
\end{theorem}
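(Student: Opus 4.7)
The plan is to prove Theorem~\ref{thm:geo-local-bound} in three steps: (i) derive the closed-form expression for $F(\lambda)$ by explicit inner maximization; (ii) establish the locally quadratic error bound, which is the main technical hurdle; and (iii) verify uniqueness of $\mc P_{\mf A}\lambda^*$.

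For step (i), I substitute $\mf y_i = \mf x_i - \mf b_i$ in the inner maximization defining $F(\lambda)$ in \eqref{eq:dual-function}. The box constraint becomes $\|\mf y_i\|\leq D$ and the maximization decouples across blocks. After pulling out the constant piece $-\dotp{\mf A^T\lambda}{\mf b}$, each block reduces to $\max_{\|\mf y_i\|\leq D}\bigl(-\dotp{\mf A_{[i]}^T\lambda}{\mf y_i} - \|\mf y_i\|\bigr)$. Aligning $\mf y_i$ with $-\mf A_{[i]}^T\lambda$ and optimizing the radius $t=\|\mf y_i\|\in[0,D]$ yields value $0$ when $\|\mf A_{[i]}^T\lambda\|\leq 1$ and $D(\|\mf A_{[i]}^T\lambda\|-1)$ otherwise; summing over $i$ gives the stated formula, whose non-smoothness at each kink $\|\mf A_{[i]}^T\lambda\|=1$ is manifest.

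For step (ii), I would exploit the primal side of duality. Under non-collinearity and distinctness of the $\mf b_i$'s, the primal problem \eqref{dec-prob-1}-\eqref{dec-prob-2} has a unique consensus minimizer $\mf x_i^* = \bar{\mf x}^*$ with $\bar{\mf x}^*$ the geometric median; the choice $D \geq 2n\max_{i,j}\|\mf b_i - \mf b_j\|$ makes the box constraints strictly inactive, forcing $\|\mf A_{[i]}^T\lambda^*\|\leq 1$ for every $\lambda^*\in\Lambda^*$ (otherwise the inner block maximizer would saturate the box). KKT stationarity then pins down $\mf A_{[i]}^T\lambda^* = -(\bar{\mf x}^*-\mf b_i)/\|\bar{\mf x}^*-\mf b_i\|$ on the indices where $\bar{\mf x}^*\neq\mf b_i$, with the remaining subdifferential blocks (if any) determined by the equation $\sum_i \mf A_{[i]} (\mf A_{[i]}^T\lambda^*) = \mf A\mf b$. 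Consequently $\mf A^T\lambda^*$ equals the \emph{same} vector $\mf u^*$ across all $\lambda^*\in\Lambda^*$, and since $F$ depends on $\lambda$ only through $\mf A^T\lambda$, one has $F(\lambda) - F^* = \Phi(\mf A^T\lambda) - \Phi(\mf u^*)$ for an explicit convex $\Phi:\mb R^{nd}\to\mb R$. The main obstacle is to show $\Phi(\mf u)-\Phi(\mf u^*) \geq c_\delta \|\mf u-\mf u^*\|^2$ on a neighborhood: I plan to do this block-by-block, with blocks where $\|\mf u_i^*\|=1$ contributing via quadratic growth of $\|\cdot\|$ transverse to $\mf u_i^*$, and blocks where $\|\mf u_i^*\|<1$ locally affine and absorbed by the curvature of the active blocks. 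The delicate point is that each summand $D\|\cdot\|$ is only positively homogeneous, so the quadratic growth must come from combining multiple blocks together with the linear piece $-\dotp{\mf A\mf b}{\lambda}$ coupled through KKT. Finally, $\|\mf A^T\lambda - \mf u^*\| \geq \sigma_{\min}^+(\mf A) \cdot \text{dist}(\lambda,\Lambda^*)$, where $\sigma_{\min}^+(\mf A)$ is the smallest positive singular value of $\mf A$, so the quadratic growth of $\Phi$ in $\mf u$ transfers to the desired bound $\text{dist}(\lambda,\Lambda^*)\leq C_\delta(F(\lambda)-F^*)^{1/2}$.

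For step (iii), symmetry of $\mf A$ (inherited from the symmetric mixing matrix $\widetilde{\mf W}$) gives $\text{col}(\mf A)=\mc N(\mf A)^\perp$, and the restriction of $\mf u\mapsto\mf A\mf u$ to $\text{col}(\mf A)$ is injective because $\mf A\mf u=0$ with $\mf u\in\text{col}(\mf A)$ forces $\mf u\in\mc N(\mf A)\cap\mc N(\mf A)^\perp=\{0\}$. Thus uniqueness of $\mf A^T\lambda^*$ established in step (ii) immediately yields a unique $\nu^*\in\mb R^{nd}$ with $\mc P_{\mf A}\lambda^*=\nu^*$ for every $\lambda^*\in\Lambda^*$, verifying Assumption~\ref{assumption}(d). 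Plugging $\beta=1/2$ into Theorem~\ref{main-thm-2} then gives the stated $\mathcal{O}(\varepsilon^{-4/5}\log_2(\varepsilon^{-1}))$ convergence time.
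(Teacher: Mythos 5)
Your step (i) is correct and is exactly the paper's derivation, and your step (iii) is sound: the transfer $\|\mf A^T\lambda-\nu^*\|\geq\sigma_{\min}^+(\mf A)\,\text{dist}(\lambda,\Lambda^*)$ is an explicit version of the Hoffman error bound the paper invokes, and your KKT argument (box constraints strictly inactive under $D\geq 2n\max_{i,j}\|\mf b_i-\mf b_j\|$, hence $\|\mf A_{[i]}^T\lambda^*\|\leq 1$ and $\mf A_{[i]}^T\lambda^*$ pinned on blocks with $\bar{\mf x}^*\neq\mf b_i$) is a legitimate, arguably cleaner, route to uniqueness of $\nu^*=\mf A^T\lambda^*$ than the paper's singular-system argument, modulo the fact that the leftover block is determined by $\sum_i\nu_i^*=0$ (the range-space condition of $\mf A^T$), not by the relation $\sum_i\mf A_{[i]}(\mf A_{[i]}^T\lambda^*)=\mf A\mf b$ you wrote.

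The genuine gap is in step (ii), which is the entire technical content of the theorem: you never prove the quadratic growth $\Phi(\mf u)-\Phi(\mf u^*)\geq c_\delta\|\mf u-\mf u^*\|^2$; you only announce a block-by-block plan and then flag ``the delicate point'' yourself. The difficulty is real and your sketch does not resolve it: inside the region $\{\|\nu_i\|\leq 1\,\forall i\}$ the function $G(\nu)=-\dotp{\nu}{\mf b}$ is purely linear, so all curvature must come from the unit-sphere geometry of the \emph{active} blocks coupled through the constraint $\sum_i\nu_i=0$, and the blocks with $\|\nu_i^*\|<1$ (or, worse, the case where the geometric median is not at any $\mf b_i$) cannot simply be ``absorbed'' without an argument producing strictly positive weights on every block. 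This is exactly where the paper does its heavy lifting: it first proves a \emph{linear}-growth lemma outside the unit-ball region (this is where $D\geq 2n\max_{i,j}\|\mf b_i-\mf b_j\|$ is used quantitatively, and it is needed because the error bound must hold on the whole sublevel set $\mc S_\delta$, not just on a neighborhood of $\nu^*$ — a regime your plan never addresses); it then splits into two cases, handling the case where the median sits at some $\mf b_n$ by the explicit sphere-perturbation inequality $\dotp{\mf b_i-\mf b_n}{\Delta\nu_i}\leq-\|\mf b_i-\mf b_n\|\|\Delta\nu_i\|^2/2$ plus Cauchy--Schwarz to control the one possibly inactive block, and handling the other case by showing all constraints $\|\nu_i^*\|=1$ are active (via a singular/nonsingular inequality-system lemma) and then invoking Motzkin's transposition theorem to produce multipliers $u_i>0$ yielding $\dotp{\nu_j^*-\nu_j}{\nu_j^*}\leq\frac{u_0}{u_j}(G(\nu)-G(\nu^*))$, which combined with $\|\nu_j-\nu_j^*\|^2\leq 2\dotp{\nu_j^*-\nu_j}{\nu_j^*}$ gives the $\beta=1/2$ bound. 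None of this machinery, or a substitute for it, appears in your proposal, so as written the claimed local error bound — and hence the $\mathcal{O}(\varepsilon^{-4/5}\log_2(\varepsilon^{-1}))$ conclusion — is not established.
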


\section{Simulation Experiments}\label{sec:simulation}
In this section, we conduct simulation experiments on the distributed geometric median problem. Each vector $\mf b_i\in\mathbb R^{100},~i\in\{1,2,\cdots,n\}$ is sampled from the uniform distribution in $[0,10]^{100}$, i.e. each entry of $\mf b_i$ is independently sampled from uniform distribution on $[0,10]$. We compare our algorithm with DSM (\cite{nedic2009distributed}), 
P-EXTRA (\cite{shi2015proximal}), Jacobian parallel ADMM (\cite{deng2017parallel}) and Smoothing (\cite{necoara2008application}) under different network sizes ($n=20,50,100$). Each network is randomly generated with a particular connectivity ratio\footnote{The connectivity ratio is defined as the number of edges divided by the total number of possible edges $n(n+1)/2$.}, and the mixing matrix is chosen to be the Metropolis-Hastings Chain (\cite{boyd2004fastest}), which can be computed in a distributed manner.
We use the relative error as the performance metric, which is defined as $\|\overline{\mf x}_t -\mf x^* \|/\|\mf x_0 - \mf x^*\|$ for each iteration $t$. The vector $\mf x_0\in\mb R^{nd}$ is the initial primal variable. The vector $\mf x^*\in\mb R^{nd}$ is the optimal solution computed by CVX \cite{grant2008cvx}. For our proposed algorithm, $\overline{\mf x}_t$ is the restarted primal average up to the current iteration. For all other algorithms, $\overline{\mf x}_t$ is the primal average up to the current iteration. The results are shown below. We see in all cases, our proposed algorithm is much better than, if not comparable to, other algorithms. For detailed simulation setups and additional simulation results, see Supplement \ref{add-simulation}. 
\begin{figure*}[ht!] 
    \centering
    \begin{subfigure}[t]{0.28\textwidth}
        \centering
        \includegraphics[height=3cm] {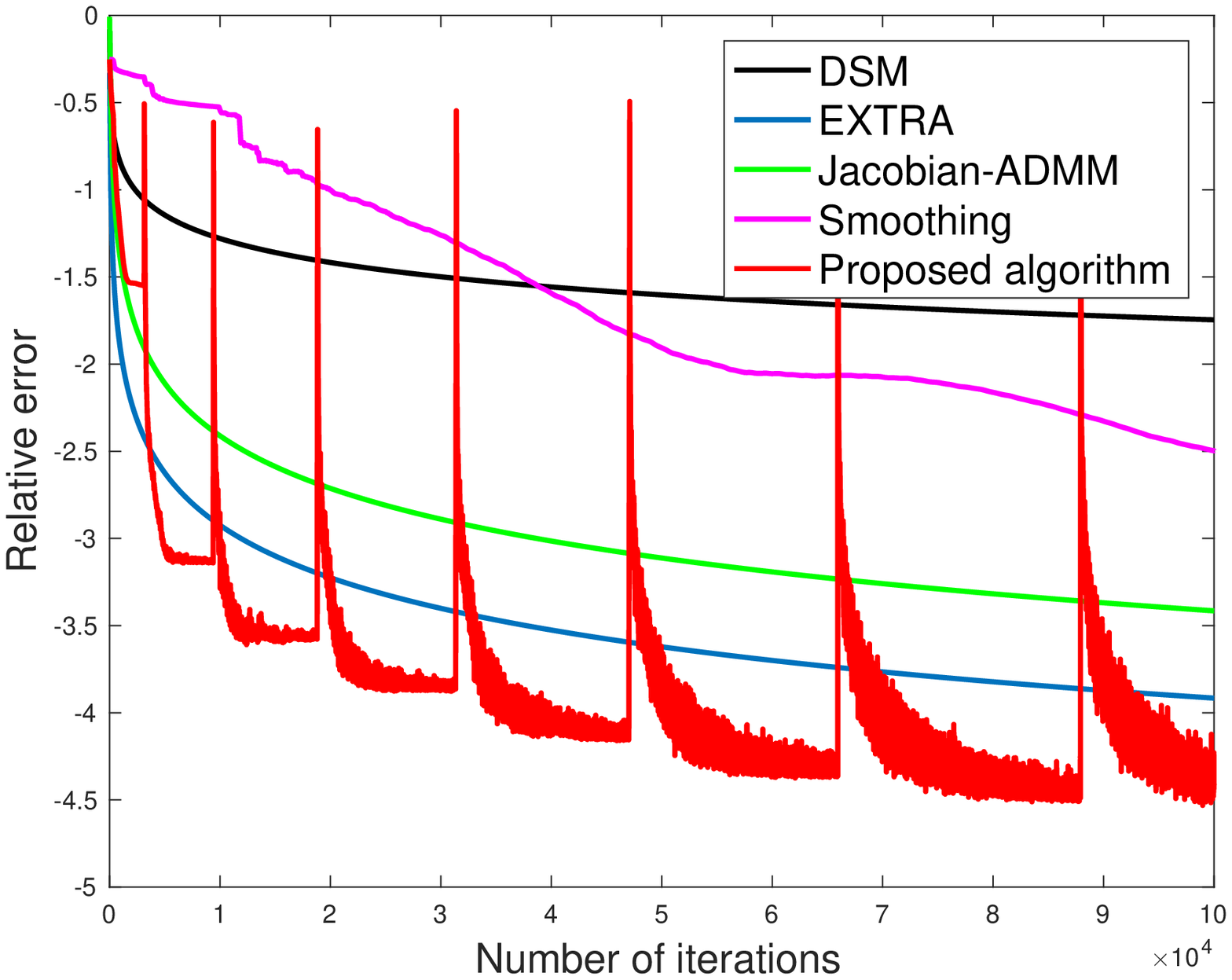}
        \vspace{-1.5em}
        \caption{}
    \end{subfigure}%
    ~ 
    \begin{subfigure}[t]{0.28\textwidth}
        \centering
        \includegraphics[height=3cm] {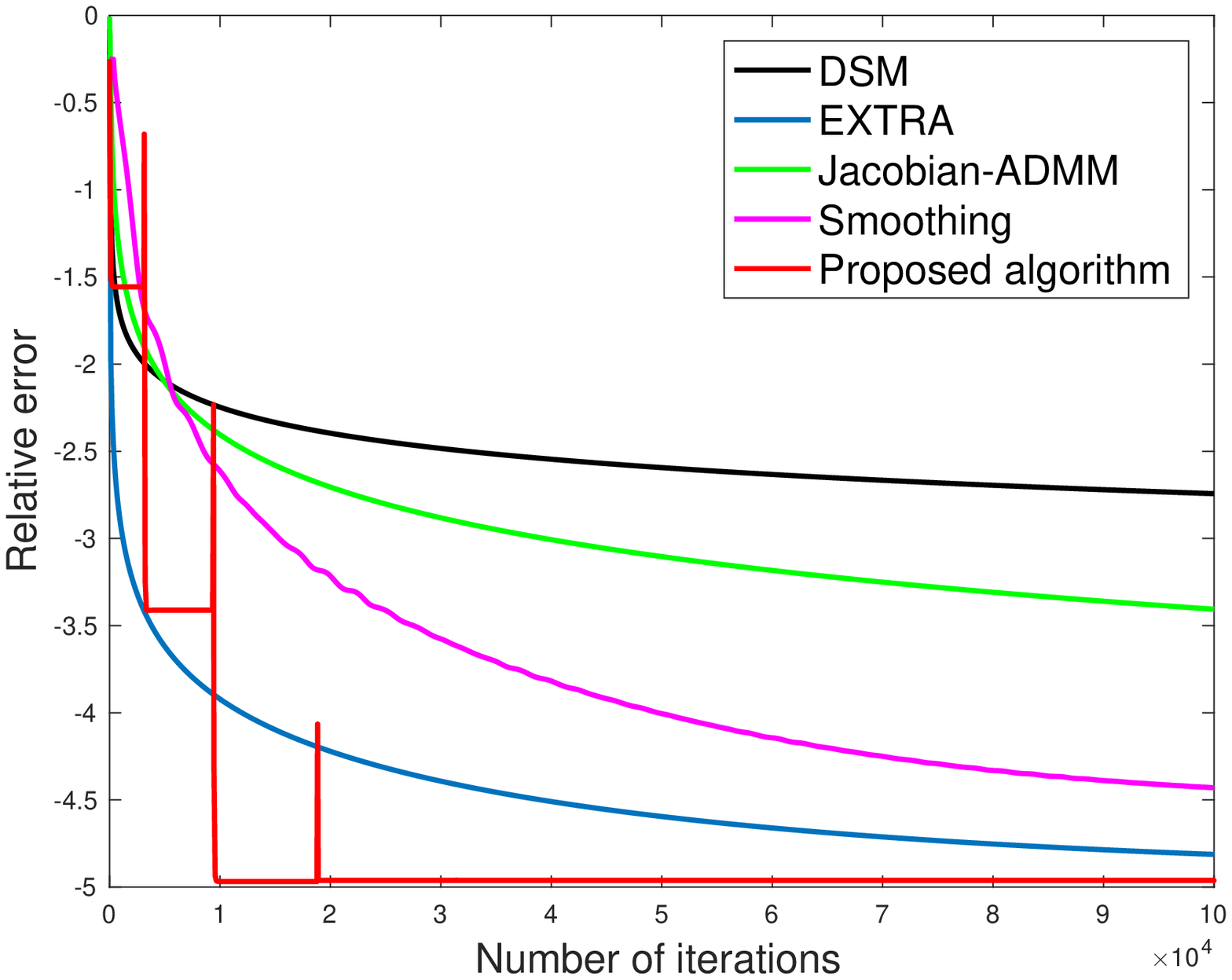}
        \vspace{-1.5em}
        \caption{}
    \end{subfigure}
    ~
    \begin{subfigure}[t]{0.28\textwidth}
        \centering
        \includegraphics[height=3cm] {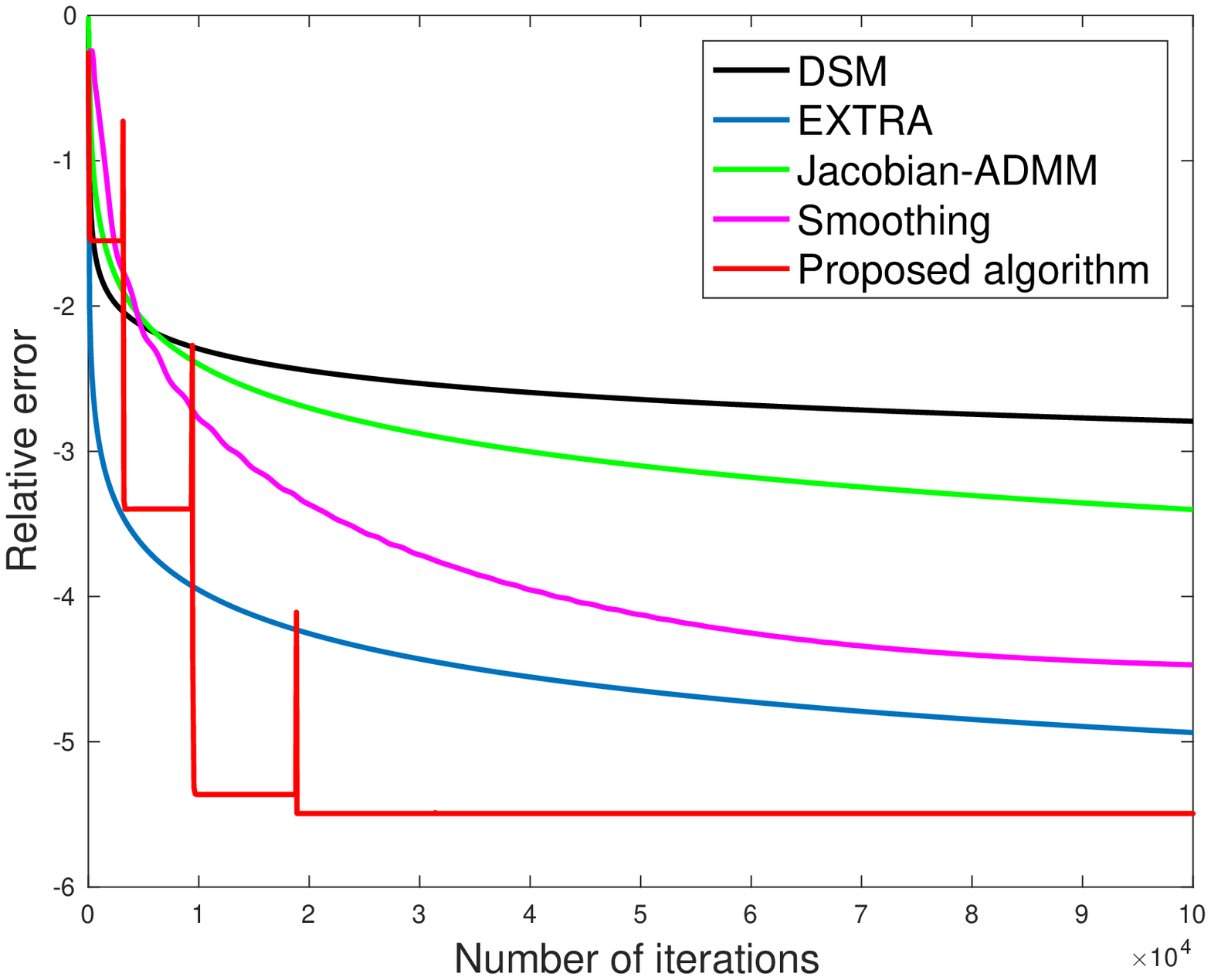}
        \vspace{-1.5em}
        \caption{}
    \end{subfigure}
 
    \caption{Comparison of different algorithms on networks of different sizes. (a) $n=20$, connectivity ratio=0.15. (b) $n=50$, connectivity ratio=0.13. (c) $n=100$, connectivity ratio=0.1.}
\end{figure*}

\subsubsection*{Acknowledgments}
The authors thank Stanislav Minsker and Jason D. Lee for helpful discussions related to the geometric median problem.
Qing Ling's research is supported in part by the National Science Foundation China under Grant 61573331 and Guangdong IIET Grant 2017ZT07X355.
Michael J. Neely's research is supported in part by the National Science Foundation under Grant CCF-1718477.

\bibliographystyle{chicago}
\bibliography{bibliography}

\newpage

\normalsize
\section{Supplement}

\subsection{Smoothing lemma}
In this section, we show that adding the strongly convex term on the primal indeed gives a smoothed dual.
\begin{lemma}\label{lem:smooth}
Let $f_\mu(\mf x)$ be defined as above and let $g_i:\mc X\rightarrow \mb R,~i=1,2,\cdots,N$ be a sequence of $G$-Lipschitz continuous convex functions, i.e. $\|\mf g(\mf x) - \mf g(\mf y)\|\leq G\|\mf x - \mf y\|,~\forall \mf x, \mf y\in\mc X$, where $\mf g(\mf x) = [g_1(\mf x),\dots,g_N(\mf x)]$. Then, the Lagrange dual function
\[
 d_\mu(\lambda):=\max_{\mf x\in\mc X} - \dotp{\lambda}{\mf g(\mf x)} - f_\mu(\mf x),~\lambda\in\mb R^N
\]
is smooth with modulus $G^2/\mu$. In particular, if $\mf g(\mf x) = \mf A\mf x - \mf b$, then, the smooth modulus is equal to
$\sigma_{\max}(\mf A^T\mf A)/\mu$, where $\sigma_{\max}(\mf A^T\mf A)$ denotes the maximum eigenvalue of $\mf A^T\mf A$.
\end{lemma}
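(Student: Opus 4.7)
The plan is to reduce the smoothing claim to the classical Legendre--Fenchel duality between strong convexity of the primal and smoothness of the dual, combined with the chain rule through the linear map $\mf A$ in the affine specialization; equivalently, one can run a direct two-point comparison argument driven entirely by the $\mu$-strong convexity of $f_\mu$.

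First, I would note that $f_\mu(\mf x) = f(\mf x) + \frac{\mu}{2}\|\mf x - \wt{\mf x}\|^2$ is $\mu$-strongly convex on $\mc X$, since $f$ is convex and the quadratic term contributes the entire $\mu$ modulus. Consequently, in the affine specialization $\mf g(\mf x) = \mf A\mf x - \mf b$, the inner objective $\mf x \mapsto \dotp{\mf A^T\lambda}{\mf x} + f_\mu(\mf x)$ is $\mu$-strongly convex for every fixed $\lambda$, so the maximizer
\[
\mf x(\lambda) := \arg\max_{\mf x\in\mc X}\bigl( -\dotp{\lambda}{\mf g(\mf x)} - f_\mu(\mf x)\bigr)
\]
is unique. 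Danskin's theorem then gives that $d_\mu$ is differentiable with $\nabla d_\mu(\lambda) = -\mf g(\mf x(\lambda))$, so the whole task reduces to bounding $\|\mf g(\mf x(\lambda_1)) - \mf g(\mf x(\lambda_2))\|$ in terms of $\|\lambda_1 - \lambda_2\|$.

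Second, to quantify the Lipschitz modulus of $\nabla d_\mu$, I would carry out the standard two-point argument. For any $\lambda_1, \lambda_2 \in \mb R^N$, writing the sharpened optimality inequality for each $\mf x(\lambda_i)$ against the other and summing gives
\[
\mu\|\mf x(\lambda_1) - \mf x(\lambda_2)\|^2 \leq \dotp{\lambda_1 - \lambda_2}{\mf g(\mf x(\lambda_2)) - \mf g(\mf x(\lambda_1))}.
\]
In the affine case the right side equals $\dotp{\mf A^T(\lambda_1 - \lambda_2)}{\mf x(\lambda_2) - \mf x(\lambda_1)}$, and Cauchy--Schwarz combined with $\|\mf A^T\mf v\|\leq \sqrt{\sigma_{\max}(\mf A^T\mf A)}\|\mf v\|$ yields
\[
\|\mf x(\lambda_1) - \mf x(\lambda_2)\|\leq \frac{\sqrt{\sigma_{\max}(\mf A^T\mf A)}}{\mu}\|\lambda_1 - \lambda_2\|.
\]
Applying the same operator-norm bound once more to $\|\nabla d_\mu(\lambda_1) - \nabla d_\mu(\lambda_2)\| = \|\mf A(\mf x(\lambda_1) - \mf x(\lambda_2))\|$ produces the smoothness modulus $\sigma_{\max}(\mf A^T\mf A)/\mu$. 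For the general setting of $G$-Lipschitz convex $g_i$, the same chain of inequalities with $G$ in place of $\sqrt{\sigma_{\max}(\mf A^T\mf A)}$ gives $G^2/\mu$.

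The main technical obstacle is verifying Danskin's theorem on the constrained set $\mc X$ and, in the fully general convex-Lipschitz setting, securing strong concavity of the inner objective when coordinates of $\lambda$ may be negative, so that $\dotp{\lambda}{\mf g(\cdot)}$ can fail to be convex when the $g_i$ are only convex rather than affine. In the affine case $\mf g = \mf A\mf x - \mf b$, which is the version actually used throughout the paper, this difficulty disappears entirely and the whole proof is driven by the $\mu$-strong convexity of $f_\mu$ together with the operator-norm bound $\|\mf A\mf v\|\leq \sqrt{\sigma_{\max}(\mf A^T\mf A)}\|\mf v\|$.
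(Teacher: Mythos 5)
Your proposal matches the paper's own proof essentially step for step: Danskin's theorem identifies $\nabla d_\mu(\lambda)$ with $\mp\mf g(\mf x(\lambda))$, the two optimality inequalities at $\mf x(\lambda_1)$ and $\mf x(\lambda_2)$ are summed to give $\mu\|\mf x(\lambda_1)-\mf x(\lambda_2)\|^2 \le \dotp{\lambda_1-\lambda_2}{\mf g(\mf x(\lambda_2))-\mf g(\mf x(\lambda_1))}$, and Cauchy--Schwarz together with the Lipschitz/operator-norm bound $\|\mf A\mf v\|\le\sqrt{\sigma_{\max}(\mf A^T\mf A)}\|\mf v\|$ applied twice yields the moduli $G^2/\mu$ and $\sigma_{\max}(\mf A^T\mf A)/\mu$, exactly as in the supplement. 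The caveat you flag about strong concavity of the inner problem possibly failing for non-affine $g_i$ when $\lambda$ has negative coordinates is a fair observation, but it is equally unaddressed in the paper's proof (which simply asserts strong concavity), and in the affine case actually used throughout the paper both arguments are complete and identical.
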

This proof of this lemma is rather standard (see also proof of Lemma 6 of \cite{yu2017convergence}) and the special case of $\mf g(\mf x) = \mf A\mf x - \mf b$ can also be derived from Fenchel duality (\cite{beck20141}). 

\begin{proof}[Proof of Lemma \ref{lem:smooth}]
First of all, note that the function $h_\lambda(\mf x) = - \dotp{\lambda}{\mf g(\mf x)} - f_\mu(\mf x)$ is strongly concave, it follows that there exists a unique minimizer $\mf x(\lambda):= \text{argmax}_{\mf x\in\mc X}h_\lambda(\mf x)$. By Danskin's theorem (see \cite{bertsekas1999nonlinear} for details), we have for any 
$\lambda\in\mb R^N$,
\[
\nabla d_\mu(\lambda) = \mf g (\mf x(\lambda)).
\]
Now, consider any $\lambda_1,\lambda_2\in\mb R^N$, we have
\begin{equation}\label{inter-1}
\|\nabla d_\mu(\lambda_1) - \nabla d_\mu(\lambda_2)\| = \|\mf g (\mf x(\lambda_1)) - \mf g (\mf x(\lambda_2))\|
\leq G\|x(\lambda_1) - x(\lambda_2)\|.
\end{equation}
where the equality follows from Danskin's Theorem and the inequality follows from Lipschitz continuity of $\mf g(\mf x)$.
Again, by the fact that $h_\mu(\mf x)$ is strongly concave with modulus $\mu$,
\begin{align*}
h_{\lambda_1}(\mf x(\lambda_2))\leq  h_{\lambda_1}(\mf x(\lambda_1)) - \frac\mu2\|x(\lambda_1) - x(\lambda_2)\|^2,\\
h_{\lambda_2}(\mf x(\lambda_1))\leq  h_{\lambda_2}(\mf x(\lambda_2)) - \frac\mu2\|x(\lambda_1) - x(\lambda_2)\|^2,
\end{align*} 
which implies
\begin{align*}
-\dotp{\lambda_1}{\mf g(\mf x(\lambda_2))} - f_\mu(\mf x(\lambda_2))\leq  -\dotp{\lambda_1}{\mf g(\mf x(\lambda_1))}- f_\mu(\mf x(\lambda_1)) - \frac\mu2\|x(\lambda_1) - x(\lambda_2)\|^2,\\
-\dotp{\lambda_2}{\mf g(\mf x(\lambda_1))} - f_\mu(\mf x(\lambda_1))\leq -\dotp{\lambda_2}{\mf g(\mf x(\lambda_2)} - f_\mu(\mf x(\lambda_2)) - \frac\mu2\|x(\lambda_1) - x(\lambda_2)\|^2.
\end{align*} 
Adding the two inequalities gives
\begin{align*}
\mu\|x(\lambda_1)) - x(\lambda_2))\|^2\leq& \dotp{\lambda_1-\lambda_2}{\mf g(\mf x(\lambda_1)) - \mf g(\mf x(\lambda_2))}\\
\leq&\|\lambda_1-\lambda_2\|\cdot\|\mf g(\mf x(\lambda_1)) - \mf g(\mf x(\lambda_2))\|\\
\leq&G\|\lambda_1-\lambda_2\|\cdot\|x(\lambda_1)) - x(\lambda_2))\|,
\end{align*}
where the last inequality follows from Lipschitz continuity of $\mf g(\mf x)$ again.
This implies
\[
\|x(\lambda_1) - x(\lambda_2)\|\leq \frac{G}{\mu}\|\lambda_1-\lambda_2\|.
\]
Combining this inequality with \eqref{inter-1} gives
\[
\|\nabla d_\mu(\lambda_1) - \nabla d_\mu(\lambda_2)\| \leq \frac{G^2}{\mu}\|\lambda_1 - \lambda_2\|,
\]
finishing the first part of the proof. The second part of the claim follows easily from the fact that 
$\|\mf A \mf x - \mf A\mf y\|\leq \sqrt{\sigma_{\max}(\mf A^T\mf A)}\|\mf x - \mf y\|$.
\end{proof}

\subsection{Proof of Theorem \ref{main-thm-1}}\label{proof-each-stage}
In this section, we give a convergence time proof of each stage. As a preliminary,  
we have the following basic lemma which bounds the perturbation of the Lagrange dual due to the primal smoothing. 
\begin{lemma}\label{lem:perturbation}
Let $F(\lambda)$ and $F_{\mu}(\lambda)$ be functions defined in \eqref{eq:dual-function} and \eqref{eq:dual-function-2}, respectively. 
Then, we have for any $\lambda\in\mb R^N$,
\[
0\leq F(\lambda) - F_\mu(\lambda) \leq \mu D^2/2
\]
and
\[
0\leq F(\lambda^*)-F_\mu(\lambda_\mu^*)\leq \mu D^2/2,
\]
for any $\lambda^*\in\Lambda^*$ and $\lambda_\mu^*\in\Lambda_\mu^*$.
\end{lemma}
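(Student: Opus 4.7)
The proof proposal is to reduce everything to a pointwise (in $\lambda$) sandwich inequality between $F$ and $F_\mu$, and then read off both claims from it. The key observation is that by Assumption \ref{assumption}(b) the diameter of $\mathcal X$ is at most $D$, and since $\wt{\mf x}\in\mc X$, we have $\|\mf x - \wt{\mf x}\|^2 \le D^2$ for every $\mf x\in\mc X$. Plugging this into the definition of $f_\mu$ gives the pointwise envelope
\[
f(\mf x) \;\le\; f_\mu(\mf x) \;\le\; f(\mf x) + \tfrac{\mu}{2}D^2, \qquad \forall \mf x\in\mc X.
\]

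First I would negate, add $-\langle \lambda,\mf A\mf x - \mf b\rangle$, and take $\max_{\mf x\in\mc X}$ on each side. Since the term $-\langle \lambda,\mf A\mf x - \mf b\rangle$ does not depend on which function we subtract, and taking a maximum preserves pointwise inequalities up to the constant shift $-\tfrac{\mu}{2}D^2$, this yields
\[
F_\mu(\lambda) \;\le\; F(\lambda) \;\le\; F_\mu(\lambda) + \tfrac{\mu}{2}D^2,\qquad \forall \lambda\in\mb R^N,
\]
which is exactly the first claim $0\le F(\lambda)-F_\mu(\lambda)\le \mu D^2/2$.

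Second, for the optimal values, I would take minima over $\lambda\in\mb R^N$ on both sides of the pointwise bound. From $F_\mu(\lambda)\le F(\lambda)$ I get $\min_\lambda F_\mu(\lambda)\le \min_\lambda F(\lambda)$, i.e.\ $F_\mu(\lambda_\mu^*)\le F(\lambda^*)$, which is the lower bound in the second claim. From $F(\lambda)\le F_\mu(\lambda)+\tfrac{\mu}{2}D^2$ I get $F(\lambda^*)=\min_\lambda F(\lambda)\le \min_\lambda F_\mu(\lambda)+\tfrac{\mu}{2}D^2 = F_\mu(\lambda_\mu^*)+\tfrac{\mu}{2}D^2$, giving the upper bound. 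Since both $\lambda^*\in\Lambda^*$ and $\lambda_\mu^*\in\Lambda_\mu^*$ attain these minima by definition, the choice of optimizers is irrelevant.

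There is no substantive obstacle here: the whole argument is just the boundedness of $\mc X$ followed by monotonicity of $\max$ and $\min$ with respect to pointwise inequalities. The only small thing to be careful about is that $\wt{\mf x}$ lies in $\mc X$ so that the diameter bound applies to $\|\mf x-\wt{\mf x}\|$, which is guaranteed by the construction in \eqref{eq:primal-mu}.
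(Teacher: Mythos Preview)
Your proposal is correct and follows essentially the same approach as the paper's proof: both hinge on the pointwise sandwich $f(\mf x)\le f_\mu(\mf x)\le f(\mf x)+\tfrac{\mu}{2}D^2$ coming from the diameter bound, and then pass through $\max_{\mf x\in\mc X}$ and $\min_{\lambda}$. The only cosmetic difference is that the paper introduces explicit maximizers $\mf x(\lambda),\mf x_\mu(\lambda)$ and splits differences such as $F(\lambda^*)-F_\mu(\lambda_\mu^*)=\bigl(F(\lambda^*)-F(\lambda_\mu^*)\bigr)+\bigl(F(\lambda_\mu^*)-F_\mu(\lambda_\mu^*)\bigr)$, whereas you invoke monotonicity of $\max$ and $\min$ directly; the underlying argument is identical.
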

\begin{proof}[Proof of Lemma \ref{lem:perturbation}]
First of all, for any $\lambda\in\mb R^N$, define 
\begin{align*}
h(\mf x)&:= - \dotp{\lambda}{\mf A \mf x - \mf b} - f(\mf x),\\
h_\mu(\mf x)& := - \dotp{\lambda}{\mf A \mf x - \mf b} - f_\mu(\mf x).
\end{align*}
Then, let
\begin{align*}
\mf x(\lambda) &\in \text{argmax}_{\mf x\in\mc X} ~h(\mf x),\\
\mf x_\mu(\lambda) &\in \text{argmax}_{\mf x\in\mc X}~h_{\mu}(\mf x) ,
\end{align*}
and we have for any $\lambda\in\mb R^N$,
\begin{align*}
F(\lambda) - F_\mu(\lambda) =& h(\mf x(\lambda)) - h_\mu(\mf x_\mu(\lambda))\\
=& h(\mf x(\lambda)) - h_\mu(\mf x(\lambda)) + h_\mu(\mf x(\lambda)) - h_\mu(\mf x_\mu(\lambda))\\
\leq& h(\mf x(\lambda)) - h_\mu(\mf x(\lambda)) \\
=& f_\mu(\mf x(\lambda)) - f(\mf x(\lambda))\leq \mu D^2/2,
\end{align*}
where the first inequality follows from the fact that $\mf x_\mu(\lambda)$ maximizes $h_\mu(\lambda)$. Similarly, we have
\begin{align*}
F_\mu(\lambda) - F(\lambda) =& h_\mu(\mf x_\mu(\lambda)) - h(\mf x(\lambda))\\
=& h_\mu(\mf x_\mu(\lambda)) - h(\mf x_\mu(\lambda)) + h(\mf x_\mu(\lambda)) - h(\mf x(\lambda))\\
\leq& h_\mu(\mf x_\mu(\lambda)) - h(\mf x_\mu(\lambda))\\
=& f(\mf x(\lambda)) - f_\mu(\mf x(\lambda)) \leq 0,
\end{align*}
where the first inequality follows from the fact that $\mf x(\lambda)$ maximizes $h(\lambda)$. 
Furthermore, we have
\begin{align*}
&F(\lambda^*)-F_\mu(\lambda_\mu^*) = F(\lambda^*)-F(\lambda_\mu^*) + F(\lambda_\mu^*) -  F_\mu(\lambda_\mu^*)
\leq F(\lambda_\mu^*) -  F_\mu(\lambda_\mu^*)\leq\mu D^2/2, \\
&F_\mu(\lambda_\mu^*)-F(\lambda^*) = F_\mu(\lambda_\mu^*)- F_\mu(\lambda^*) + F_\mu(\lambda^*)- F(\lambda^*)
\leq F_\mu(\lambda^*)- F(\lambda^*)\leq 0,
\end{align*}
finishing the proof.
\end{proof}

To prove Theorem \ref{main-thm-1},
we start by rewriting the primal-dual smoothing algorithm (Algorithm 1) as the Nesterov's accelerated gradient algorithm on the smoothed dual function $F_\mu(\lambda)$: For any $t=0,1,\cdots,T-1$,
\begin{equation}\label{eq:acc-gradient}
\left \{
  \begin{tabular}{c}
   $\widehat{\lambda}_t = \lambda_t + \theta_t(\theta_{t-1}^{-1} - 1)(\lambda_t - \lambda_{t-1})$ \\
   $\lambda_{t+1} = \widehat{\lambda}_t - \mu\nabla F_\mu(\widehat{\lambda}_t)$~~~~~~~~~~~~~~~~~~~~\\
   $\theta_{t+1} = \frac{\sqrt{\theta_t^4 + 4\theta_t^2} - \theta_t^2}{2}$~~~~~~~~~~~~~~~~~~~~~~~~
  \end{tabular}
\right.
\end{equation}
where we use Danskin's Theorem to claim that $\nabla F_\mu(\widehat{\lambda}_t) = \mf b - \mf A \mf x(\widehat{\lambda}_t)$. As $t\rightarrow\infty$, we have $\frac{\theta_t}{\theta_{t-1}} = \sqrt{1-\theta_t}\rightarrow1$.
Classical results on the convergence time of accelerated gradient methods are as follows:

\begin{theorem}[Theorem 1 of \cite{tseng2010approximation}]\label{thm-Tseng}
Consider the algorithm \eqref{eq:acc-gradient} starting from $\lambda_0 = \lambda_{-1} = \wt{\lambda}$. 
For any $\lambda\in\mb R^N$, we have
\begin{equation}\label{eq:dual-bound}
F_\mu(\lambda_t)\leq F_\mu(\lambda) + \theta_{t-1}^2\frac{\sigma_{\max}(\mf A^T\mf A)\|\lambda - \wt\lambda\|^2}{\mu},
\end{equation}
Furthermore, for any slot $t\in\{0,1,2,\cdots,T-1\}$,
\begin{multline}\label{eq:one-step}
F_\mu(\lambda_{t+1}) \leq (1-\theta_t)\l( F_\mu(\wh\lambda_t) + \dotp{\nabla F_\mu(\wh\lambda_t)}{\lambda_t- \wh\lambda_t} \r)
+\theta_t\l( F_\mu(\wh\lambda_t) + \dotp{\nabla F_\mu(\wh\lambda_t)}{\lambda- \wh\lambda_t} \r)\\
+\frac{\theta_t^2\sigma_{\max}(\mf A^T\mf A)}{2\mu} \l(\| \lambda - \mf z_t \|^2 - \| \lambda - \mf z_{t+1} \|^2\r),
\end{multline}
where $\mf z_t = -(\theta_t^{-1} - 1)\lambda_t + \theta_t^{-1}\wh\lambda_t$.
\end{theorem}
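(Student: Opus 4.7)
The statement is the classical convergence theorem for Nesterov's accelerated gradient method on a smooth convex function, specialized here to $F_\mu$, which by Lemma \ref{lem:smooth} has gradient Lipschitz with modulus $\sigma_{\max}(\mf A^T\mf A)/\mu$. I would follow Tseng's estimating-sequence argument. First, one verifies the convex-combination identities
\[
\wh\lambda_t = \theta_t\mf z_t + (1-\theta_t)\lambda_t \quad\text{and}\quad \lambda_{t+1} = \theta_t\mf z_{t+1} + (1-\theta_t)\lambda_t
\]
directly from the definition of $\mf z_t$ and the update rule \eqref{eq:acc-gradient}, together with the derived relation $\mf z_{t+1} - \mf z_t = -\tfrac{\mu}{\theta_t}\nabla F_\mu(\wh\lambda_t)$. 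For the one-step inequality \eqref{eq:one-step}, I would apply the descent lemma
\[
F_\mu(\lambda_{t+1}) \leq F_\mu(\wh\lambda_t) + \dotp{\nabla F_\mu(\wh\lambda_t)}{\lambda_{t+1}-\wh\lambda_t} + \tfrac{\sigma_{\max}(\mf A^T\mf A)}{2\mu}\|\lambda_{t+1}-\wh\lambda_t\|^2,
\]
substitute $\lambda_{t+1}-\wh\lambda_t = \theta_t(\mf z_{t+1}-\mf z_t)$, and invoke the three-point identity $\|\lambda-\mf z_{t+1}\|^2 = \|\lambda-\mf z_t\|^2 - 2\dotp{\mf z_{t+1}-\mf z_t}{\lambda-\mf z_t} + \|\mf z_{t+1}-\mf z_t\|^2$ to absorb the $\|\mf z_{t+1}-\mf z_t\|^2$ term into a telescoping difference plus a $\dotp{\nabla F_\mu(\wh\lambda_t)}{\lambda-\mf z_t}$ piece. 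The affine identity $(1-\theta_t)(\lambda_t-\wh\lambda_t) + \theta_t(\lambda-\wh\lambda_t) = \theta_t(\lambda-\mf z_t)$, immediate from the first convex-combination identity, finally recasts the cross term into the two brackets that appear in \eqref{eq:one-step}.

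For the dual convergence bound \eqref{eq:dual-bound}, I would upper-bound each bracket in the one-step inequality by the corresponding $F_\mu$ value via convexity of $F_\mu$: $F_\mu(\wh\lambda_t) + \dotp{\nabla F_\mu(\wh\lambda_t)}{\lambda_t-\wh\lambda_t} \leq F_\mu(\lambda_t)$ and similarly $F_\mu(\wh\lambda_t) + \dotp{\nabla F_\mu(\wh\lambda_t)}{\lambda-\wh\lambda_t} \leq F_\mu(\lambda)$. Subtracting $F_\mu(\lambda)$, dividing by $\theta_t^2$, and using the key stepsize recursion $\tfrac{1-\theta_t}{\theta_t^2} = \tfrac{1}{\theta_{t-1}^2}$ (equivalent to $\theta_{t+1}^2 = \theta_t^2(1-\theta_{t+1})$, with $\theta_0 = 1$ making the $t=0$ boundary clean) gives the Lyapunov recursion
\[
\frac{F_\mu(\lambda_{t+1}) - F_\mu(\lambda)}{\theta_t^2} + \tfrac{\sigma_{\max}(\mf A^T\mf A)}{2\mu}\|\lambda - \mf z_{t+1}\|^2 \leq \frac{F_\mu(\lambda_t) - F_\mu(\lambda)}{\theta_{t-1}^2} + \tfrac{\sigma_{\max}(\mf A^T\mf A)}{2\mu}\|\lambda - \mf z_t\|^2.
\]
Telescoping from $t=0$, using $\mf z_0 = \wt\lambda$ together with $F_\mu(\lambda_0) - F_\mu(\lambda)$ being eliminated by the vanishing $\tfrac{1-\theta_0}{\theta_0^2} = 0$ coefficient, and dropping the nonnegative $\|\lambda-\mf z_T\|^2$ term produces \eqref{eq:dual-bound}.

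The main obstacle is the precise algebraic bookkeeping by which the three-point identity and the stepsize recursion cooperate so that the descent-lemma slack $\|\mf z_{t+1}-\mf z_t\|^2$ is exactly absorbed and the $\theta_t^{-2}$-weighted telescoping collapses into a decreasing Lyapunov function; this is the heart of Nesterov's acceleration trick, and the update rule for $\theta_{t+1}$ is engineered precisely to enforce $\tfrac{1-\theta_t}{\theta_t^2} = \tfrac{1}{\theta_{t-1}^2}$ at every step, without which neither the one-step bound nor the telescoping would close.
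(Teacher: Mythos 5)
First, note the paper itself offers no proof of this statement: it is imported verbatim as Theorem 1 of Tseng (2010), so the only question is whether your reconstruction of that classical argument is internally sound. Its architecture is the standard (correct) one, and most of the ingredients check out: the two convex-combination identities, the increment relation $\mf z_{t+1}-\mf z_t=-\frac{\mu}{\theta_t}\nabla F_\mu(\wh\lambda_t)$, the affine identity recasting the cross term, the convexity bounds on the two brackets, the recursion $\theta_{t+1}^2=\theta_t^2(1-\theta_{t+1})$, and the telescoping from $\mf z_0=\wt\lambda$ are all right. The gap is at the step you yourself flag as the heart of the matter: the claimed \emph{exact} absorption of the descent-lemma slack does not happen with the constants you commit to. You use the $\mf z$-increment coming from \eqref{eq:acc-gradient} (dual stepsize $\mu$), the descent lemma with the true modulus $\sigma_{\max}(\mf A^T\mf A)/\mu$, and the telescoping coefficient $\frac{\theta_t^2\sigma_{\max}(\mf A^T\mf A)}{2\mu}$ from \eqref{eq:one-step}. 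Carrying out the bookkeeping, the $\|\mf z_{t+1}-\mf z_t\|^2$ terms do cancel, but what remains is the requirement
\[
\theta_t\dotp{\nabla F_\mu(\wh\lambda_t)}{\mf z_{t+1}-\lambda}\;\leq\;\theta_t\,\sigma_{\max}(\mf A^T\mf A)\,\dotp{\nabla F_\mu(\wh\lambda_t)}{\mf z_{t+1}-\lambda}
\qquad\forall\,\lambda\in\mb R^N,
\]
i.e. $\bigl(\sigma_{\max}(\mf A^T\mf A)-1\bigr)\dotp{\nabla F_\mu(\wh\lambda_t)}{\mf z_{t+1}-\lambda}\geq0$ for every $\lambda$, which cannot hold for arbitrary $\lambda$ unless $\sigma_{\max}(\mf A^T\mf A)=1$. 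The clean cancellation needs the three constants — the prox/stepsize constant implicit in the $\mf z$-update, the descent-lemma modulus, and the coefficient multiplying the telescoping difference — to be one and the same $L$; with $L=\sigma_{\max}(\mf A^T\mf A)/\mu$ this forces the dual step $\lambda_{t+1}=\wh\lambda_t-\frac{\mu}{\sigma_{\max}(\mf A^T\mf A)}\nabla F_\mu(\wh\lambda_t)$, hence $\mf z_{t+1}-\mf z_t=-\frac{\mu}{\theta_t\sigma_{\max}(\mf A^T\mf A)}\nabla F_\mu(\wh\lambda_t)$, not the increment you (correctly) derived from the algorithm as written.

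The fix is localized, and the mismatch is in fact inherited from the paper, whose Algorithm 1 uses dual stepsize $\mu$ while the stated constants in \eqref{eq:one-step}--\eqref{eq:dual-bound} correspond to stepsize $\mu/\sigma_{\max}(\mf A^T\mf A)$; since the paper gives no proof, your write-up is where this first has to be confronted. Either rerun your argument with stepsize $\mu/\sigma_{\max}(\mf A^T\mf A)$ — this is exactly Tseng's setting, yields \eqref{eq:one-step} verbatim, and gives \eqref{eq:dual-bound} with the stronger constant $\frac{\sigma_{\max}(\mf A^T\mf A)}{2\mu}$ — or keep stepsize $\mu$, assume $\sigma_{\max}(\mf A^T\mf A)\leq1$ so the descent lemma is valid with modulus $1/\mu$, and conclude \eqref{eq:one-step} with $\frac{\theta_t^2}{2\mu}$ in place of $\frac{\theta_t^2\sigma_{\max}(\mf A^T\mf A)}{2\mu}$. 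Be aware you cannot simply enlarge that coefficient afterwards, since the difference $\|\lambda-\mf z_t\|^2-\|\lambda-\mf z_{t+1}\|^2$ it multiplies need not be nonnegative; whichever convention you adopt must then be carried consistently into the telescoping for \eqref{eq:dual-bound}, which is otherwise handled correctly in your sketch.
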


This theorem bounds the convergence time of the dual function. Our goal is to pass this dual convergence result to that of primal objective and constraint.
Specifically, we aim to show the following primal objective bound and constraint violation:

To prove Theorem \ref{main-thm-1}, we start by proving the following bound:
\begin{lemma}\label{lem:supp-1}
Consider running Algorithm 1 with a given initial condition $\widetilde{\lambda}$ in $\mathbb{R}^N$.
For any $\lambda\in\mb R^N$, we have
\begin{equation}\label{eq:one-step-primal}
f_\mu(\overline{\mf x}_T) - \dotp{\mf b - \mf A\overline{\mf x}_T}{\lambda} - f^*_\mu 
\leq \frac{\sigma_{\max}(\mf A^T\mf A)}{2\mu S_T}\l( \|\lambda-\wt\lambda\|^2 - \| \lambda  - \mf z_T \|^2 \r),
\end{equation}
where $\mf z_T$ is defined in Theorem  \ref{thm-Tseng},
\[
f_\mu^* := \min_{\mf A\mf x - \mf b = 0,~\mf x\in\mc X}f_\mu(x),~~\overline{\mf x}_T:= \frac{1}{S_T}\sum_{t=0}^{T-1}\frac{\mf x(\widehat{\lambda}_t)}{\theta_t},
\]
\end{lemma}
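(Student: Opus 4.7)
My plan is to convert the Tseng-type dual convergence guarantee into a primal-averaging bound: I will derive a per-iteration inequality on $f_\mu(\mf x(\wh\lambda_t)) - \dotp{\lambda}{\mf b - \mf A\mf x(\wh\lambda_t)}$ and then aggregate with Jensen's inequality using the weights $1/\theta_t$ that appear in $\overline{\mf x}_T$.

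First I would unpack the linearization terms in \eqref{eq:one-step}. By Danskin's theorem, $\nabla F_\mu(\wh\lambda_t) = \mf b - \mf A\mf x(\wh\lambda_t)$, and a direct calculation gives
\[
V_t := F_\mu(\wh\lambda_t) + \dotp{\nabla F_\mu(\wh\lambda_t)}{\lambda - \wh\lambda_t} = \dotp{\lambda}{\mf b - \mf A\mf x(\wh\lambda_t)} - f_\mu(\mf x(\wh\lambda_t)),
\]
which is exactly the (negated) primal quantity of interest. The first bracketed term in \eqref{eq:one-step}, namely $F_\mu(\wh\lambda_t) + \dotp{\nabla F_\mu(\wh\lambda_t)}{\lambda_t - \wh\lambda_t}$, is upper bounded by $F_\mu(\lambda_t)$ by convexity of $F_\mu$, weakening \eqref{eq:one-step} to
\[
F_\mu(\lambda_{t+1}) \leq (1-\theta_t)F_\mu(\lambda_t) + \theta_t V_t + \frac{\theta_t^2\sigma_{\max}(\mf A^T\mf A)}{2\mu}\l(\|\lambda - \mf z_t\|^2 - \|\lambda - \mf z_{t+1}\|^2\r).
\]

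Next, I would subtract $F_\mu^*$ from both sides, divide by $\theta_t^2$, and exploit the stepsize recursion $(1-\theta_t)/\theta_t^2 = 1/\theta_{t-1}^2$ (valid for $t\geq 1$, and for $t=0$ the coefficient is $(1-\theta_0)/\theta_0^2 = 0$ since $\theta_0 = 1$). Summing the resulting inequalities from $t=0$ to $T-1$ telescopes the $F_\mu(\lambda_t) - F_\mu^*$ terms, and the Bregman-like differences telescope to $\|\lambda - \wt\lambda\|^2 - \|\lambda - \mf z_T\|^2$ using $\mf z_0 = \wh\lambda_0 = \wt\lambda$ (immediate from $\theta_0 = 1$ and $\lambda_0 = \wt\lambda$). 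This yields
\[
\frac{F_\mu(\lambda_T) - F_\mu^*}{\theta_{T-1}^2} \leq \sum_{t=0}^{T-1}\frac{V_t - F_\mu^*}{\theta_t} + \frac{\sigma_{\max}(\mf A^T\mf A)}{2\mu}\l(\|\lambda - \wt\lambda\|^2 - \|\lambda - \mf z_T\|^2\r).
\]
The left-hand side is nonnegative, so it can be dropped; substituting $F_\mu^* = -f_\mu^*$ and the explicit form of $V_t$ rearranges the remainder to
\[
\sum_{t=0}^{T-1}\frac{f_\mu(\mf x(\wh\lambda_t)) - f_\mu^* - \dotp{\lambda}{\mf b - \mf A\mf x(\wh\lambda_t)}}{\theta_t} \leq \frac{\sigma_{\max}(\mf A^T\mf A)}{2\mu}\l(\|\lambda - \wt\lambda\|^2 - \|\lambda - \mf z_T\|^2\r).
\]

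Finally, dividing by $S_T$ and invoking Jensen's inequality on the convex function $f_\mu$ (which gives $f_\mu(\overline{\mf x}_T) \leq S_T^{-1}\sum_t f_\mu(\mf x(\wh\lambda_t))/\theta_t$) together with linearity of the constraint term $\mf x \mapsto \dotp{\lambda}{\mf b - \mf A\mf x}$ yields the claim. The main obstacle is recognizing the right way to combine the one-step bound: dividing by $\theta_t^2$ is what creates the telescoping in the dual gaps, and the residual weight on $V_t$ after this normalization is exactly $1/\theta_t$, which is the design reason the averaging weights in $\overline{\mf x}_T$ are chosen to be $1/\theta_t$.
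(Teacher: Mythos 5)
Your proposal is correct and follows essentially the same route as the paper's proof: bound the linearization at $\lambda_t$ by $F_\mu(\lambda_t)$ via convexity, divide by $\theta_t^2$ and telescope using $(1-\theta_t)/\theta_t^2 = 1/\theta_{t-1}^2$ with $\mf z_0 = \wt\lambda$, drop the nonnegative dual gap, identify $F_\mu(\lambda_\mu^*) = -f_\mu^*$ by strong duality and $\nabla F_\mu(\wh\lambda_t) = \mf b - \mf A\mf x(\wh\lambda_t)$ by Danskin, and finish with Jensen under the $1/\theta_t$ weights. The only difference is cosmetic: you substitute the Danskin identities into the linearized term $V_t$ up front, while the paper does so after telescoping.
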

\begin{proof}[Proof of Lemma \ref{lem:supp-1}]
First,
subtracting $F_\mu(\lambda_\mu^*)$ from both sides of \eqref{eq:one-step} in Theorem \ref{thm-Tseng}, we have for any $\lambda\in\mb R^N$ and any $t\in\{0,1,2,\cdots,T-1\}$,
\begin{align*}
F_\mu(\lambda_{t+1}) - F_\mu(\lambda_\mu^*)
\leq& (1-\theta_t)\l(F_\mu(\wh\lambda_t)+ \dotp{\nabla F_\mu(\wh\lambda_t)}{\lambda_t- \wh\lambda_t} - F_\mu(\lambda_\mu^*)\r)\\
&+ \theta_t \l(F_\mu(\wh\lambda_t)+\dotp{\nabla F_\mu(\wh\lambda_t)}{ \lambda - \wh\lambda_t} - F_\mu( \lambda_\mu^*)\r)\\
&+\frac{\theta_t^2\sigma_{\max}(\mf A^T\mf A)}{2\mu}\l(\|  \lambda - \mf z_t \|^2 - \|  \lambda - \mf z_{t+1} \|^2\r)\\
\leq&(1-\theta_t)\l(F_\mu(\lambda_t) - F_\mu(\lambda_\mu^*)\r) + \theta_t  \l(F_\mu(\wh\lambda_t)+\dotp{\nabla F_\mu(\wh\lambda_t)}{\lambda - \wh\lambda_t} - F_\mu(\lambda_\mu^*)\r)\\
&+\frac{\theta_t^2\sigma_{\max}(\mf A^T\mf A)}{2\mu}\l(\| \lambda - \mf z_t \|^2 - \| \lambda - \mf z_{t+1} \|^2\r),
\end{align*}
where the second inequality follows from the convexity of $F_\mu$ that $F_\mu(\wh\lambda_t)+ \dotp{\nabla F_\mu(\wh\lambda_t)}{\lambda_t- \wh\lambda_t}\leq F_\mu(\lambda_t)$.
Dividing $\theta_t^2$ from both sides gives $\forall t\geq1$,
\begin{align}
\frac{1}{\theta_t^2}\l(F_\mu(\lambda_{t+1}) - F_\mu(\lambda_\mu^*)\r)
\leq &\frac{1-\theta_t}{\theta_t^2}\l(F_\mu(\lambda_t) - F_\mu(\lambda_\mu^*)\r) + \frac{1}{\theta_t}  \l(F_\mu(\wh\lambda_t)+\dotp{\nabla F_\mu(\wh\lambda_t)}{ \lambda - \wh\lambda_t} - F_\mu(\lambda_\mu^*)\r) \nonumber\\
&+\frac{\sigma_{\max}(\mf A^T\mf A)}{2\mu}\l(\|  \lambda - \mf z_t \|^2 - \|  \lambda - \mf z_{t+1} \|^2\r)\nonumber\\
=&\frac{1}{\theta_{t-1}^2}\l(F_\mu(\lambda_t) - F_\mu(\lambda_\mu^*)\r) + \frac{1}{\theta_t}  \l(F_\mu(\wh\lambda_t)+\dotp{\nabla F_\mu(\wh\lambda_t)}{ \lambda - \wh\lambda_t} - F_\mu(\lambda_\mu^*)\r) \nonumber\\
&+\frac{\sigma_{\max}(\mf A^T\mf A)}{2\mu}\l(\|  \lambda - \mf z_t \|^2 - \|  \lambda - \mf z_{t+1} \|^2\r), \label{eq:star}
\end{align}
where the last equality uses the identity $(1-\theta_t)/\theta_t^2 = 1/\theta_{t-1}^2$.    On the other hand, applying equation \eqref{eq:star} at $t=0$ and using $\theta_0=\theta_{-1}=1$ gives 
$(1-\theta_0)/\theta_0^2 = 0$ and 
\begin{align*}
\frac{1}{\theta_0^2}\l(F_\mu(\lambda_{1}) - F_\mu(\lambda_\mu^*)\r)
\leq & \frac{1}{\theta_0}  \l(F_\mu(\wh\lambda_0)+\dotp{\nabla F_\mu(\wh\lambda_t)}{ \lambda - \wh\lambda_0} - F_\mu(\lambda_\mu^*)\r)\\
&+\frac{\sigma_{\max}(\mf A^T\mf A)}{2\mu}\l(\|  \lambda - \mf z_0 \|^2 - \|  \lambda - \mf z_{1} \|^2\r).
\end{align*}
Taking telescoping sums from both sides from $t=0$ to $t = T-1$ gives
\begin{multline*}
0\leq \frac{1}{\theta_{T-1}^2}\l(F_\mu(\lambda_{T}) - F_\mu(\lambda_\mu^*)\r)
\leq\sum_{t=0}^{T-1}\frac{1}{\theta_t} \l(F_\mu(\wh\lambda_t)+\dotp{\nabla F_\mu(\wh\lambda_t)}{ \lambda - \wh\lambda_t} - F_\mu(\lambda_\mu^*)\r)\\
+\frac{\sigma_{\max}(\mf A^T\mf A)}{2\mu}\l(\|  \lambda - \mf z_0 \|^2 - \|  \lambda - \mf z_{T} \|^2\r).
\end{multline*}
By Assumption \ref{assumption}(a), the feasible set $\l\{  \mathbf {Ax} -\mathbf{b} =0\r\}$ is not empty, and thus, strong duality holds for problem 
\[\min_{\mf A\mf x - \mf b = 0,~\mf x\in\mc X}f_\mu(x)\]
(See, for example Proposition 5.3.1 of \cite{bertsekas2009convex}),
and we have $F_\mu(\lambda_\mu^*) = -f_\mu^*$. Since 
$$\nabla F_\mu(\wh\lambda_t) = \mf b - \mf A \mf x(\wh\lambda_t),~~
F_\mu(\wh\lambda_t)=\dotp{\wh\lambda_t}{\mf b- \mf A\mf x(\wh\lambda_t)} - f_\mu(\mf x(\wh\lambda_t)),$$ 
it follows,
\begin{align*}
0\leq&\sum_{t=0}^{T-1}\frac{1}{\theta_t} \l(\dotp{\wh\lambda_t}{\mf b- \mf A\mf x(\wh\lambda_t)} - f_\mu(\mf x(\wh\lambda_t))
+\dotp{\mf b - \mf A \mf x(\wh\lambda_t)}{ \lambda - \wh\lambda_t} + f_\mu^*\r)\\
&+\frac{\sigma_{\max}(\mf A^T\mf A)}{2\mu}\l(\|  \lambda - \mf z_0 \|^2 - \|  \lambda - \mf z_{T} \|^2\r)\\
=&\sum_{t=0}^{T-1}\frac{1}{\theta_t} \l( - f_\mu(\mf x(\wh\lambda_t))
+\dotp{\mf b - \mf A \mf x(\wh\lambda_t)}{ \lambda} + f_\mu^*\r)
+\frac{\sigma_{\max}(\mf A^T\mf A)}{2\mu}\l(\|  \lambda - \mf z_0 \|^2 - \|  \lambda - \mf z_{T} \|^2\r)
\end{align*}
Rearranging the terms and divding $S_T = \sum_{t=0}^{T-1}\frac{1}{\theta_t}$ from both sides,
\begin{multline*}
\frac{1}{S_T}\sum_{t=0}^{T-1}\frac{1}{\theta_t}\l(f_\mu(\mf x(\wh\lambda_t))
-\dotp{\mf b - \mf A \mf x(\wh\lambda_t)}{ \lambda} - f_\mu^*\r)\leq \frac{\sigma_{\max}(\mf A^T\mf A)}{2\mu S_T}\l(\|  \lambda - \mf z_0 \|^2 - \|  \lambda - \mf z_{T} \|^2\r).
\end{multline*}
Note that $\mf z_0 = \wt\lambda$ by the definition of $\mf z_t$. 
By Jensen's inequality, we can move the weighted average inside the function $f_\mu$ and finish the proof.
\end{proof}

\begin{proof}[Proof of Theorem \ref{main-thm-1}]
First of all, we have  by definition of $\Lambda_\mu^*$ in \eqref{eq:optimal-set-2} and strong duality, 
for any $\lambda_\mu^*\in\Lambda_\mu^*$,
\[
f_\mu(\overline{\mf x}_T) + \dotp{\mf A \overline{\mf x}_T - \mf b}{\lambda_\mu^*}\geq f_\mu^*.
\]
Substituting this bound into \eqref{eq:one-step-primal} gives 
\[
\dotp{\mf A \overline{\mf x}_T - \mf b}{\lambda-\lambda_\mu^*}\leq \frac{\sigma_{\max}(\mf A^T\mf A)}{2\mu S_T}\l(\|  \lambda - \wt\lambda \|^2 - \|  \lambda - \mf z_{T} \|^2\r)\leq  \frac{\sigma_{\max}(\mf A^T\mf A)}{2\mu S_T}\|  \lambda - \wt\lambda \|^2.
\]
Since this holds for any $\lambda\in\mb R^N$, the following holds:
\[
\max_{\lambda\in\mb R^N}\left[\dotp{\mf A \overline{\mf x}_T - \mf b}{\lambda-\lambda_\mu^*} -  \frac{\sigma_{\max}(\mf A^T\mf A)}{2\mu S_T}\|  \lambda - \wt\lambda \|^2\right]\leq0.
\]
The maximum is attained at $\lambda = \wt\lambda +\frac{\mu S_T}{\sigma_{\max}(\mf A^T\mf A)}\l( \mf A \overline{\mf x}_T - \mf b \r)$, which implies,
\begin{align*}
&\dotp{\mf A \overline{\mf x}_T - \mf b}{\wt\lambda-\lambda_\mu^*} + \frac{\mu S_T}{2\sigma_{\max}(\mf A^T\mf A)}\| \mf A \overline{\mf x}_T - \mf b \|^2\leq0.\\
\Rightarrow&\dotp{\mf A \overline{\mf x}_T - \mf b}{\mc P_{\mf A}\l(\wt\lambda-\lambda_\mu^*\r)} + \frac{\mu S_T}{2\sigma_{\max}(\mf A^T\mf A)}\| \mf A \overline{\mf x}_T - \mf b \|^2\leq0,
\end{align*}
where we used the fact that $\mf A \overline{\mf x}_T - \mf b = \mc P_{\mf A}(\mf A \overline{\mf x}_T - \mf b)$ because $\mathbf{b}$ is in the column space of $\mathbf{A}$. 
By Cauchy-Schwarz inequality, we have
\begin{align*}
\frac{\mu S_T}{2\sigma_{\max}(\mf A^T\mf A)}&\| \mf A \overline{\mf x}_T - \mf b \|^2\leq 
\|\mf A \overline{\mf x}_T - \mf b\| \cdot \|\mc P_{\mf A}\l(\wt\lambda-\lambda_\mu^*\r)\| \\
\Rightarrow&\| \mf A \overline{\mf x}_T - \mf b \|\leq \frac{2\sigma_{\max}(\mf A^T\mf A)}{\mu S_T}  \|\mc P_{\mf A}\l(\wt\lambda-\lambda_\mu^*\r)\|.
\end{align*}
Let $\wt\lambda^* = \text{argmin}_{\lambda^*\in\Lambda^*}\|\lambda^* - \wt\lambda\|$, by triangle inequality,
\begin{align*}
\|\mf A \overline{\mf x}_T - \mf b\|
\leq& 
\frac{2\sigma_{\max}(\mf A^T\mf A)}{\mu S_T}\l(  \|\mc P_{\mf A}\l(\wt\lambda-\wt\lambda^*\r)\| + \|\mc P_{\mf A}\l(\wt\lambda^* -\lambda_\mu^* \r)\| \r)\\
\leq&\frac{2\sigma_{\max}(\mf A^T\mf A)}{\mu S_T}\l(  \|\wt\lambda-\wt\lambda^*\| + \|\mc P_{\mf A}\l(\wt\lambda^* -\lambda_\mu^* \r)\| \r),
\end{align*}
where the second inequality follows from the non-expansiveness of the projection.
Now we look at the second term on the right hand side of the above inequality, Using Assumption \ref{assumption}(d), there exists a unique vector 
$\nu^*$ such that $\mc P_{\mf A}\lambda^*=\nu^*,~\forall \lambda^*\in\Lambda^*$. Thus,
\begin{multline*}
\|\mc P_{\mf A}\l(\wt\lambda^* -\lambda_\mu^* \r)\| = \|\nu^*-\mc P_{\mf A}\lambda_\mu^* \|= \min_{\lambda^*\in\Lambda:\mc P_{\mf A}\lambda^*=\nu^*}
\|\mc P_{\mf A}\l(\lambda^* -\lambda_\mu^* \r)\|\\
\leq \min_{\lambda^*\in\mb R^N:\mc P_{\mf A}\lambda^*=\nu^*}
\|\lambda^* -\lambda_\mu^* \| = \text{dist}(\lambda_\mu^*,\Lambda^*).
\end{multline*}
Thus, we get the constraint violation bound
\[
\|\mf A \overline{\mf x}_T - \mf b\|\leq  \frac{2\sigma_{\max}(\mf A^T\mf A)}{\mu S_T}\l(  \|\wt\lambda-\wt\lambda^*\| + \text{dist}(\lambda_\mu^*,\Lambda^*) \r).
\]
To get the objective suboptimality bound, we start from \eqref{eq:one-step-primal} again. Substituting $\lambda = \wt\lambda^* =  \text{argmin}_{\lambda^*\in\Lambda^*}\|\lambda^* - \wt\lambda\|$ into \eqref{eq:one-step-primal} gives
\[
f_\mu(\overline{\mf x}_T) - \dotp{\mf b - \mf A\overline{\mf x}_T}{\wt\lambda^*} - f^*_\mu 
\leq \frac{\sigma_{\max}(\mf A^T\mf A)}{2\mu S_T}\l( \|\wt\lambda^* -\wt\lambda\|^2 - \| \wt\lambda^*  - \mf z_T \|^2 \r)
\leq \frac{\sigma_{\max}(\mf A^T\mf A)}{2\mu S_T} \|\wt\lambda^* -\wt\lambda\|^2.
\]
By Cauchy-Schwarz inequality and the fact that $ \mf A\overline{\mf x}_T-\mathbf{b} = \mathcal{P}_{\mathbf{A}}( \mf A\overline{\mf x}_T-\mathbf{b})$, we have
\[
f_\mu(\overline{\mf x}_T) - f^*_\mu \leq \|\mf b - \mf A\overline{\mf x}_T\|\|\mathcal{P}_{\mf A}\wt\lambda^*\|+\frac{\sigma_{\max}(\mf A^T\mf A)}{2\mu S_T} \|\wt\lambda^* -\wt\lambda\|^2.
\]
By the fact that $f(\overline{\mf x}_T)\leq f_\mu( \overline{\mf x}_T) \leq f(\overline{\mf x}_T)+\frac{\mu}{2}D^2$, and the fact that 
$-f_\mu^* = F_\mu(\lambda_\mu^*)\geq F(\lambda^*)-\frac{\mu}{2}D^2 = -f^* -\frac{\mu}{2}D^2$ (from Lemma \ref{lem:perturbation}), we obtain
\[
f(\overline{\mf x}_T) - f^*\leq \|\mf b - \mf A\overline{\mf x}_T\|\|\mathcal{P}_{\mf A}\wt\lambda^*\|+\frac{\sigma_{\max}(\mf A^T\mf A)}{2\mu S_T} \|\wt\lambda^* -\wt\lambda\|^2+  \frac{\mu}{2}D^2,
\]
finishing the proof.
\end{proof}

\subsection{Proof of Theorem \ref{main-thm-2}}\label{proof-overall}
In this section, we give an analysis of the proposed homotopy method building upon the previous results on the primal-dual smoothing. Our improved convergence time analysis under such a homotopy method is built upon previous results, notably the following lemma:

\begin{lemma}[\cite{yang2015rsg}]\label{lem:yang-lin}
Consider any convex function $F:\mathbb{R}^N\rightarrow\mb R$ such that the set of optimal points $\Lambda^*$ defined in \eqref{eq:optimal-set-1} is non-empty. Then, for any  $\lambda\in\mb R^N$ and any $\varepsilon>0$,
\[ 
\|\lambda- \lambda_\varepsilon^\dagger\|\leq \frac{\text{dist}(\lambda_\varepsilon^\dagger,\Lambda^*)}{\varepsilon}
\l( F(\lambda) - F(\lambda_\varepsilon^\dagger) \r),
\]
where $\lambda_\varepsilon^\dagger:= \text{argmin}_{\lambda_\varepsilon\in\mc S_\varepsilon}\|\lambda - \lambda_\varepsilon\|$, and 
$\mc S_\varepsilon$ is the $\varepsilon$-sublevel set defined in \eqref{eq:sublevel}.
\end{lemma}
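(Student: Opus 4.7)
The inequality asserts a bound on how far $\lambda$ can lie from its projection $\lambda_\varepsilon^\dagger$ onto the $\varepsilon$-sublevel set, in terms of the suboptimality gap and the distance from the projection to $\Lambda^*$. My plan is a purely geometric argument combining convexity of $F$ along the segment from an optimum to $\lambda$ with the non-expansive property of projection onto the closed convex set $\mathcal{S}_\varepsilon$.

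\medskip

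First I dispose of the trivial case: if $\lambda\in\mathcal{S}_\varepsilon$, then $\lambda_\varepsilon^\dagger=\lambda$ so the left-hand side vanishes and there is nothing to prove. So assume $F(\lambda)-F^*>\varepsilon$. Since $F$ is convex on $\mathbb{R}^N$ it is continuous, so $\mathcal{S}_\varepsilon$ is closed and convex, and $\lambda_\varepsilon^\dagger$ exists and is unique. Because $\lambda\notin\mathcal{S}_\varepsilon$, the projection lies on the boundary and $F(\lambda_\varepsilon^\dagger)=F^*+\varepsilon$; in particular $\lambda_\varepsilon^\dagger\notin\Lambda^*$, so $r:=\mathrm{dist}(\lambda_\varepsilon^\dagger,\Lambda^*)>0$. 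Pick $\lambda^*\in\Lambda^*$ attaining this distance, i.e. $\|\lambda_\varepsilon^\dagger-\lambda^*\|=r$, and set $d:=\|\lambda-\lambda_\varepsilon^\dagger\|$.

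\medskip

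Next, I chase a point along the segment from $\lambda^*$ to $\lambda$. Parametrize $\mu(s)=(1-s)\lambda^*+s\lambda$ for $s\in[0,1]$; by convexity
\begin{equation*}
F(\mu(s))\le (1-s)F^*+sF(\lambda)=F^*+s\bigl(F(\lambda)-F^*\bigr).
\end{equation*}
Pick the smallest $s^*\in(0,1]$ with $F(\mu(s^*))=F^*+\varepsilon$ (it exists by continuity, using $F(\mu(0))=F^*<F^*+\varepsilon<F(\lambda)=F(\mu(1))$). The displayed inequality at $s=s^*$ yields $s^*\ge \varepsilon/(F(\lambda)-F^*)$. Since $\mu(s^*)\in\mathcal{S}_\varepsilon$, the projection characterization gives
\begin{equation*}
d=\|\lambda-\lambda_\varepsilon^\dagger\|\le \|\lambda-\mu(s^*)\|=(1-s^*)\|\lambda-\lambda^*\|\le \Bigl(1-\tfrac{\varepsilon}{F(\lambda)-F^*}\Bigr)\|\lambda-\lambda^*\|.
\end{equation*}

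\medskip

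Finally I close the argument with triangle inequality $\|\lambda-\lambda^*\|\le d+r$ and substitute above. Writing $\alpha=(F(\lambda)-F^*-\varepsilon)/(F(\lambda)-F^*)<1$, this reads $d\le \alpha(d+r)$, so $d(1-\alpha)\le \alpha r$, and since $1-\alpha=\varepsilon/(F(\lambda)-F^*)$,
\begin{equation*}
d\le \tfrac{\alpha}{1-\alpha}\,r=\tfrac{F(\lambda)-F^*-\varepsilon}{\varepsilon}\,r=\tfrac{F(\lambda)-F(\lambda_\varepsilon^\dagger)}{\varepsilon}\,\mathrm{dist}(\lambda_\varepsilon^\dagger,\Lambda^*),
\end{equation*}
which is exactly the claim. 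The only real subtlety is the identification $F(\lambda_\varepsilon^\dagger)=F^*+\varepsilon$ on the boundary in the nontrivial case, which in turn is what allows me to replace $F(\lambda)-F^*-\varepsilon$ by $F(\lambda)-F(\lambda_\varepsilon^\dagger)$. Everything else is a straightforward chain of convexity, non-expansiveness of projection, and triangle inequality.
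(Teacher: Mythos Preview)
The paper does not supply its own proof of this lemma; it is quoted verbatim from \cite{yang2015rsg} and used as a black box in the proof of Theorem~\ref{main-thm-2}. So there is nothing to compare against in the present paper.

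That said, your argument is correct and complete. The three ingredients are exactly the right ones: (i) in the nontrivial case the projection $\lambda_\varepsilon^\dagger$ sits on the boundary of $\mathcal{S}_\varepsilon$, so $F(\lambda_\varepsilon^\dagger)=F^*+\varepsilon$; (ii) convexity along the segment from any optimum $\lambda^*$ to $\lambda$ forces the segment to enter $\mathcal{S}_\varepsilon$ no later than parameter $s^*\ge \varepsilon/(F(\lambda)-F^*)$, whence $\|\lambda-\lambda_\varepsilon^\dagger\|\le (1-s^*)\|\lambda-\lambda^*\|$ by definition of projection; and (iii) combining this with the triangle inequality $\|\lambda-\lambda^*\|\le \|\lambda-\lambda_\varepsilon^\dagger\|+\|\lambda_\varepsilon^\dagger-\lambda^*\|$ and solving for $\|\lambda-\lambda_\varepsilon^\dagger\|$ yields the claim once one takes $\lambda^*$ to realize $\mathrm{dist}(\lambda_\varepsilon^\dagger,\Lambda^*)$. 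The only facts used beyond convexity are that $\Lambda^*$ is nonempty and closed (so the nearest optimum exists) and that $\mathcal{S}_\varepsilon$ is closed convex (so the projection is well defined); both follow from continuity of a finite convex function on $\mathbb{R}^N$. This is essentially the same geometric argument that appears in \cite{yang2015rsg}.
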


We start with the following easy corollary of Theorem \ref{thm-Tseng}.
\begin{corollary}
Suppose $\{\lambda_t\}_{t=0}^{T}$ is the sequence produced by Algorithm 1 with the initial condition $\lambda_0 = \lambda_{-1} = \wt{\lambda}$, then,
for any $\lambda\in\mb R^N$, we have
\begin{equation}\label{eq:dual-bound-2}
F(\lambda_t)\leq F(\lambda) + \theta_{t-1}^2\frac{\sigma_{\max}(\mf A^T\mf A)\|\lambda - \wt\lambda\|^2}{\mu} + \frac{D^2}{2}\mu,
\end{equation}
\end{corollary}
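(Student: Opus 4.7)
The plan is to prove this corollary by a simple sandwich argument that passes from the smoothed dual $F_\mu$ to the original dual $F$ using the perturbation bound in Lemma \ref{lem:perturbation}. The starting point is inequality \eqref{eq:dual-bound} of Theorem \ref{thm-Tseng}, which I can apply directly since Algorithm 1 is exactly the accelerated gradient iteration \eqref{eq:acc-gradient} on the smoothed dual $F_\mu$, initialized at $\lambda_0=\lambda_{-1}=\widetilde{\lambda}$.

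First, I would write down the bound from Theorem \ref{thm-Tseng} evaluated at the arbitrary reference point $\lambda\in\mathbb{R}^N$:
\[
F_\mu(\lambda_t)\leq F_\mu(\lambda) + \theta_{t-1}^2\,\frac{\sigma_{\max}(\mathbf{A}^T\mathbf{A})\|\lambda-\widetilde{\lambda}\|^2}{\mu}.
\]
Next, I would invoke Lemma \ref{lem:perturbation} twice: on the left-hand side, the upper inequality $F(\lambda_t)-F_\mu(\lambda_t)\leq \mu D^2/2$ gives $F(\lambda_t)\leq F_\mu(\lambda_t)+\mu D^2/2$, and on the right-hand side, the lower inequality $F(\lambda)-F_\mu(\lambda)\geq 0$ gives $F_\mu(\lambda)\leq F(\lambda)$. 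Substituting these two into the Theorem \ref{thm-Tseng} bound produces the claimed inequality
\[
F(\lambda_t)\leq F(\lambda) + \theta_{t-1}^2\,\frac{\sigma_{\max}(\mathbf{A}^T\mathbf{A})\|\lambda-\widetilde{\lambda}\|^2}{\mu} + \frac{D^2}{2}\mu.
\]

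There is essentially no obstacle here; the only subtlety is to verify that the two applications of Lemma \ref{lem:perturbation} point in the correct directions (upper bound on $F(\lambda_t)-F_\mu(\lambda_t)$, and lower bound on $F(\lambda)-F_\mu(\lambda)$), so that each of the two perturbation terms enters with a sign that is compatible with the desired inequality. The $\mu D^2/2$ penalty then appears only once (from the left-hand conversion), since the right-hand conversion contributes nothing thanks to $F_\mu\leq F$. This corollary will be used in the proof of Theorem \ref{main-thm-2} to control the dual function value $F(\lambda^{(k)})$ at the end of each stage of the homotopy method in terms of distances to optimal multipliers, which is precisely what the local error bound condition can then convert into an estimate on $\text{dist}(\lambda^{(k)},\Lambda^*)$.
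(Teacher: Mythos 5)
Your proposal is correct and is exactly the paper's argument: the paper proves this corollary by combining inequality \eqref{eq:dual-bound} of Theorem \ref{thm-Tseng} with the two-sided perturbation bound $0\leq F(\lambda)-F_\mu(\lambda)\leq \mu D^2/2$ from Lemma \ref{lem:perturbation}, which is precisely your sandwich argument with the signs applied in the right directions.
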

The proof of this corollary is obvious combining \eqref{eq:dual-bound} of Theorem \ref{thm-Tseng} with Lemma \ref{lem:perturbation}.

The following result, which bounds the convergence time of the dual function, is proved via induction. 
\begin{lemma}\label{lem:dual-convergence}
Suppose the assumptions in Theorem \ref{main-thm-2} hold. 
Let $\l\{\lambda^{(k)}\r\}_{k=0}^{K}$ be generated from Algorithm 2. 
For any $k = 0,1,2,\cdots,K$, we have
\[
F(\lambda^{(k)}) - F^*\leq \varepsilon_k + \varepsilon,
\]
where $\varepsilon_k = \varepsilon_0/2^k$.
\end{lemma}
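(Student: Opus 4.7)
The plan is to prove the bound by induction on $k$.

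For the \textbf{base case} $k=0$, $\lambda^{(0)}=0$ gives $F(0)=\max_{\mathbf{x}\in\mathcal{X}}(-f(\mathbf{x}))\leq M$ by Assumption~\ref{assumption}(b), while strong duality combined with $f^{*}\leq M$ yields $F^{*}=-f^{*}\geq -M$. Hence $F(\lambda^{(0)})-F^{*}\leq 2M\leq \varepsilon_{0}$ by the hypothesis $\varepsilon_{0}\geq 2M$.

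For the \textbf{inductive step}, assume $F(\lambda^{(k-1)})-F^{*}\leq \varepsilon_{k-1}+\varepsilon$. At stage $k$, Algorithm~2 calls Algorithm~1 with initial dual $\widetilde{\lambda}=\lambda^{(k-1)}$ and smoothing parameter $\mu_{k}=\varepsilon_{k}/D^{2}$, so inequality~\eqref{eq:dual-bound-2} at iterate $T$ gives, for every $\lambda\in\mathbb{R}^{N}$,
\begin{equation*}
F(\lambda^{(k)})-F^{*}\;\leq\;(F(\lambda)-F^{*})\;+\;\theta_{T-1}^{2}\,\frac{\sigma_{\max}(\mathbf{A}^{T}\mathbf{A})\,D^{2}\,\|\lambda-\lambda^{(k-1)}\|^{2}}{\varepsilon_{k}}\;+\;\frac{\varepsilon_{k}}{2}.
\end{equation*}
I would choose $\lambda=\lambda_{\varepsilon}^{\dagger}:=\text{argmin}_{\lambda'\in\mathcal{S}_{\varepsilon}}\|\lambda^{(k-1)}-\lambda'\|$, the projection of $\lambda^{(k-1)}$ onto the $\varepsilon$-sublevel set. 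Then $F(\lambda_{\varepsilon}^{\dagger})-F^{*}\leq \varepsilon$, and by continuity of $F$ the projection lies on $\{F=F^{*}+\varepsilon\}$ whenever $\lambda^{(k-1)}\notin\mathcal{S}_{\varepsilon}$, which gives $F(\lambda^{(k-1)})-F(\lambda_{\varepsilon}^{\dagger})\leq \varepsilon_{k-1}$ (the remaining case is trivial with $0$ on the left).

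The distance $\|\lambda^{(k-1)}-\lambda_{\varepsilon}^{\dagger}\|$ is then controlled by combining Lemma~\ref{lem:yang-lin} with the local error bound applied at $\lambda_{\varepsilon}^{\dagger}\in\mathcal{S}_{\varepsilon}\subseteq\mathcal{S}_{\delta}$ (valid since $\varepsilon\leq \delta/2$):
\begin{equation*}
\|\lambda^{(k-1)}-\lambda_{\varepsilon}^{\dagger}\|\;\leq\;\frac{\text{dist}(\lambda_{\varepsilon}^{\dagger},\Lambda^{*})}{\varepsilon}\bigl(F(\lambda^{(k-1)})-F(\lambda_{\varepsilon}^{\dagger})\bigr)\;\leq\;C_{\delta}\,\varepsilon^{\beta-1}\,\varepsilon_{k-1}\;=\;2C_{\delta}\,\varepsilon^{\beta-1}\,\varepsilon_{k},
\end{equation*}
using $\varepsilon_{k-1}=2\varepsilon_{k}$. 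The choice $K=\lceil \log_{2}(\varepsilon_{0}/\varepsilon)\rceil+1$ guarantees $\varepsilon\leq \varepsilon_{K-1}\leq \varepsilon_{k-1}$ for every $k\leq K$, which is implicitly used to pass from $\varepsilon_{k-1}+\varepsilon$ to $2\varepsilon_{k-1}$ in the step above. Plugging this bound into the single-stage estimate, using $\theta_{T-1}^{2}\leq 4/(T+1)^{2}$, and invoking the lower bound on $T$ from Theorem~\ref{main-thm-2} forces the middle term to be at most $\varepsilon_{k}/2$, which closes the induction via $F(\lambda^{(k)})-F^{*}\leq \varepsilon+\varepsilon_{k}/2+\varepsilon_{k}/2=\varepsilon_{k}+\varepsilon$.

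The main obstacle is the bookkeeping needed to see that the prescribed $T$ works uniformly across all stages. The key structural fact driving the proof is that $\|\lambda^{(k-1)}-\lambda_{\varepsilon}^{\dagger}\|^{2}$ scales as $\varepsilon_{k}^{2}$ while $\mu_{k}=\varepsilon_{k}/D^{2}$ scales as $\varepsilon_{k}$, so the middle term factors as $\varepsilon_{k}$ times a ratio depending only on $\varepsilon$ (through the error-bound factor $\varepsilon^{\beta-1}$) and on the fixed problem data $\sigma_{\max}(\mathbf{A}^{T}\mathbf{A})$, $D$, $C_{\delta}$; this is precisely what makes a single stage-independent choice of $T$ sufficient.
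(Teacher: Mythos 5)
Your proposal is correct and follows essentially the same route as the paper's proof: induction on $k$, projecting $\lambda^{(k-1)}$ onto the $\varepsilon$-sublevel set, combining Lemma \ref{lem:yang-lin} with the local error bound at the projection (which lies on the boundary $\{F=F^*+\varepsilon\}$ when $\lambda^{(k-1)}\notin\mathcal{S}_\varepsilon$), and absorbing the resulting distance into the single-stage bound \eqref{eq:dual-bound-2} via the prescribed $T$ and $\mu_k=\varepsilon_k/D^2$, with the same level of constant bookkeeping as the paper. One side remark is off but harmless: the choice of $K$ gives $\varepsilon_{K-1}\leq\varepsilon\leq 2\varepsilon_{K-1}$ (not $\varepsilon\leq\varepsilon_{K-1}$), and this relation is not actually needed in the induction since the boundary equality $F(\lambda_\varepsilon^\dagger)-F^*=\varepsilon$ already yields $F(\lambda^{(k-1)})-F(\lambda_\varepsilon^\dagger)\leq\varepsilon_{k-1}$.
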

\begin{proof}[Proof of Lemma \ref{lem:dual-convergence}]
First of all, for $k=0$, we have $\lambda^{(0)} = 0$ and
\[
F(\lambda^{(0)}) = - \max_{\mf x\in \mc X} f(\mf x)\leq M,
\]
thus, $F(\lambda^{(0)}) - F^*\leq 2M\leq \varepsilon_0+\varepsilon$, by the assumption that $2M\leq \varepsilon_0$ in Theorem \ref{main-thm-2}. Now for any $k>0$, let $\lambda_\varepsilon^{(k-1)}\in\mc S_{\varepsilon}$ be the closest point to $\lambda^{(k-1)}$ specified in Algorithm 2, i.e. $\lambda_\varepsilon^{(k-1)} = \text{argmin}_{\lambda_\varepsilon\in\mc S_\varepsilon} \|\lambda_\varepsilon - \lambda^{(k-1)}\|$. 
Suppose the claim holds for $(k-1)$-th stage, where $k >0$, then, consider the $k$-th stage.
\begin{enumerate}
\item If $F(\lambda^{(k-1)}) - F^*\leq \varepsilon$, then, $\lambda^{(k-1)}\in\mc S_{\varepsilon}$, thus, $\|\lambda_\varepsilon^{(k-1)}-\lambda^{(k-1)}\| = 0$. By \eqref{eq:dual-bound-2} with $\widetilde{\lambda} = \lambda^{(k-1)}$ from Algorithm 2 and $\lambda$ chosen to be $\lambda_\varepsilon^{(k-1)}$, we have
\[
F(\lambda^{(k)}) - F(\lambda_\varepsilon^{(k-1)})
\leq \frac{D^2}{2}\mu_k\leq \frac{\varepsilon_k}{2},
\]
Thus, it follows, $F(\lambda^{(k)}) - F^*
=  F(\lambda^{(k)}) - F(\lambda_\varepsilon^{(k-1)}) + F(\lambda_\varepsilon^{(k-1)})- F^* \leq \varepsilon_k +\varepsilon.$

\item If $F(\lambda^{(k-1)}) - F^*> \varepsilon$, then, $\lambda^{(k-1)}\not\in\mc S_{\varepsilon}$ and we claim that 
\begin{equation}\label{eq:attain-on-boundary}
F(\lambda^{(k-1)}_{\varepsilon}) - F^* = \varepsilon.
\end{equation}
Indeed, suppose on the contrary, $F(\lambda^{(k-1)}_{\varepsilon}) - F^* < \varepsilon$, then, by the continuity of the function $F$, 
there exists $\alpha\in(0,1)$ and $\lambda'=\alpha\lambda^{(k-1)}_{\varepsilon}+(1-\alpha)\lambda^{(k-1)}$ such that $F(\lambda') - F^* = \varepsilon$, i.e. $\lambda'\in\mc S_{\varepsilon}$, and $\|\lambda^{(k-1)}-\lambda'\| 
= \alpha\|\lambda^{(k-1)}-\lambda^{(k-1)}_{\varepsilon}\| < \|\lambda^{(k-1)}-\lambda^{(k-1)}_{\varepsilon}\| $, contradicting the fact that $\lambda_\varepsilon^{(k-1)} = \text{argmin}_{\lambda_\varepsilon\in\mc S_\varepsilon} \|\lambda_\varepsilon - \lambda^{(k-1)}\|$.

On the other hand, by induction hypothesis, we have
\[
F(\lambda^{(k-1)}) - F^*\leq \varepsilon_{k-1} + \varepsilon,
\]
which, combining with \eqref{eq:attain-on-boundary},  implies $F(\lambda^{(k-1)}) - F(\lambda_\varepsilon^{(k-1)})\leq \varepsilon_{k-1}$,
and by Lemma \ref{lem:yang-lin},
\begin{multline*}
\|\lambda^{(k-1)} - \lambda^{(k-1)}_\varepsilon\|\leq \frac{\text{dist}(\lambda_\varepsilon^{(k-1)}, \Lambda^*)}{\varepsilon}
\l( F(\lambda^{(k-1)}) - F(\lambda^{(k-1)}_\varepsilon) \r)\\
\leq\frac{C_\delta  \l(F(\lambda_\varepsilon^{(k-1)}) - F^*\r)^\beta\l( F(\lambda^{(k-1)}) - F(\lambda_\varepsilon^{(k-1)}) \r) }{\varepsilon}
\leq\frac{C_\delta \varepsilon_{k-1}}{\varepsilon^{1-\beta}},
\end{multline*}
where the second inequality follows from $\varepsilon\leq \delta$ assumed in Theorem \ref{main-thm-2} and the local error bound condition \eqref{eq:local-error-bound}. 
Note that by definition of $\theta_t$ in Algorithm 1, 
$\frac{1}{\theta_{T-1}^2}\geq T^2
\geq\frac{4D^2C_\delta^2\sigma_{\max}(\mf A^T\mf A)(2M)^{\beta}}{\varepsilon^{4/(2+\beta)}}$, and $\mu_k =  \varepsilon_k /D^2$.
Substituting these quantities into \eqref{eq:dual-bound-2} with $\widetilde{\lambda} = \lambda^{(k-1)}$ and $\lambda$ chosen to be $\lambda_\varepsilon^{(k-1)}$, we have
\begin{align*}
F(\lambda^{(k)}) - F(\lambda_\varepsilon^{(k-1)})\leq& \frac{D^2}{2}\mu_k + \theta_{T-1}^2\frac{\sigma_{\max}(\mf A^T\mf A)\|\lambda^{(k-1)}_\varepsilon - \lambda^{(k-1)}\|^2}{\mu_k} \\
\leq&\frac{\varepsilon_k}{2}
+\frac{\varepsilon^{4/(2+\beta)}}{2(2M)^{\beta}\varepsilon^{2(1-\beta)}}\varepsilon_k
=\frac{\varepsilon_k}{2}
+\frac{\varepsilon^{\frac{2\beta(1+\beta)}{2+\beta}}}{2(2M)^{\beta}}\varepsilon_k\\
\leq&\frac{\varepsilon_k}{2}\l(1+ \l(\frac{\varepsilon}{2M}\r)^\beta\r)\leq \varepsilon_k,
\end{align*}
where the second from the last inequality follows from $\varepsilon\leq1$ and the last inequality follows from  $\varepsilon\leq 2M$ assumed in Theorem \ref{main-thm-2}. Thus, it follows $F(\lambda^{(k)}) - F^* \leq \varepsilon_k +\varepsilon.$
\end{enumerate}
Overall, we finish the proof.
\end{proof}

\begin{proof}[Proof of Theorem \ref{main-thm-2}]
Since the desired accuracy is chosen small enough so that $\varepsilon\leq \frac\delta2$, and the number of stages $K\geq \lceil\log_2(\varepsilon_0/\varepsilon)\rceil + 1$, it follows $\varepsilon_{K-1}\leq \varepsilon\leq \frac\delta2$, and thus there exists some threshold $k'\in\{0,1,2,\cdots,K-1\}$ such that for any $k\geq k'$, $\varepsilon_k + \varepsilon \leq  \delta$. As a consequence, by Lemma \ref{lem:dual-convergence}, we have for any $k\geq k'$,
\[
F(\lambda^{(k)}) - F^*\leq \varepsilon_k + \varepsilon \leq\delta,
\]
i.e. $\lambda^{(k)}\in\mc S_\delta$, the $\delta$-sublevel set of the function $F(\lambda)$. By the local error bound condition \eqref{eq:local-error-bound}, we have
\[
\text{dist}(\lambda^{(k)},\Lambda^*)\leq \l(F(\lambda^{(k)}) - F^*\r)^\beta\leq(\varepsilon_k + \varepsilon )^\beta.
\]
Now, consider the $(k+1)$-th stage in the homotopy method. By \eqref{eq:constraint} in Theorem \ref{main-thm-1}, 
\begin{multline}\label{eq:inter-constraint}
\|\mf A\overline{\mf x}^{(k+1)}-\mf b\| \leq \frac{2\sigma_{\max}(\mf A^T\mf A)}{\mu_{k+1} S_T}\l(\|\lambda^{(k)}_*-\mf \lambda^{(k)}\| + \text{dist}(\lambda_{\mu_{k+1}}^*,\Lambda^*)\r)\\
\leq \frac{2\sigma_{\max}(\mf A^T\mf A)}{\mu_{k+1} S_T}\l( (\varepsilon_k + \varepsilon )^\beta + \text{dist}(\lambda_{\mu_{k+1}}^*,\Lambda^*)\r),
\end{multline}
where $\lambda^{(k)}_* = \text{argmin}_{\lambda^*\in\Lambda^*}\|\lambda^* - \lambda^{(k)}\|$, and the second inequality follows from 
\begin{equation}\label{eq:inter-step-bound}
\|\lambda^{(k)}_*-\mf \lambda^{(k)}\|= \text{dist}(\lambda^{(k)},\Lambda^*)\leq (\varepsilon_k + \varepsilon )^\beta.
\end{equation}
To bound the second term on the right hand side of \eqref{eq:inter-constraint}, note that $\mu_{k+1} = \varepsilon_{k+1}/D^2= \varepsilon_{k}/(2D^2) \leq\delta/(2D^2)$. Thus, by Lemma \ref{lem:perturbation},
\begin{multline*}
F(\lambda_{\mu_{k+1}}^*) - F(\lambda^*) = F(\lambda_{\mu_{k+1}}^*) - F_{\mu_{k+1}}(\lambda_{\mu_{k+1}}^*)  + F_{\mu_{k+1}}(\lambda_{\mu_{k+1}}^*) - F(\lambda^*)\\
\leq \frac{\mu_{k+1}}{2}D^2 + 0 = \mu_{k+1}D^2/2\leq \delta/2,
\end{multline*}
thus, it follows $\lambda_{\mu_{k+1}}^*\in\mc S_{\delta}$ and by local error bound condition
\[
\text{dist}(  \lambda_{\mu_{k+1}}^*,\Lambda^* )\leq C_\delta\l(F(\lambda_{\mu_{k+1}}^*) - F(\lambda^*) \r)^\beta\leq C_\delta\l(\varepsilon_k+\varepsilon\r)^\beta.
\]
Overall, substituting this bound into \eqref{eq:inter-constraint} ,we get
\begin{multline*}
\| \mf A\overline{\mf x}^{(k+1)}-\mf b \|
\leq  \frac{2\sigma_{\max}(\mf A^T\mf A)}{\mu_{k+1} S_T}\l( 1+C_\delta \r)\l(\varepsilon_k+\varepsilon\r)^\beta
\leq  \frac{4\sigma_{\max}(\mf A^T\mf A) D^2}{\varepsilon_{k+1} T^2}\l( 1+C_\delta \r)\l(\varepsilon_k+\varepsilon\r)^\beta,
\end{multline*}
where we use the fact that $\mu_{k+1} = \varepsilon_{k+1}/D^2$ and $S_T = \sum_{t=0}^{T-1}\frac{1}{\theta_t}\geq \sum_{t=1}^{T}t\geq \frac{T^2}{2}$.
Substituting the bound $T^2\geq\frac{4D^2C_\delta^2\sigma_{\max}(\mf A^T\mf A)(2M)^{\beta}}{\varepsilon^{4/(2+\beta)}}$ gives for any $k\geq k'$,
\begin{multline}\label{eq:constraint-bound}
\| \mf A\overline{\mf x}^{(k+1)}-\mf b \| \leq \frac{1+C_\delta}{C_\delta^2(2M)^\beta}\frac{(\varepsilon_k+\varepsilon)^\beta\varepsilon^{4/(2+\beta)}}{\varepsilon_{k+1}}
=\frac{2(1+C_\delta)}{C_\delta^2(2M)^\beta}\frac{(\varepsilon_k+\varepsilon)^\beta\varepsilon^{4/(2+\beta)}}{\varepsilon_{k}}\\
\leq \frac{2(1+C_\delta)}{C_\delta^2(2M)^\beta}\frac{(3\varepsilon_k)^\beta(4\varepsilon_k)^{4/(2+\beta)}}{\varepsilon_{k}}
\leq \frac{24(1+C_\delta)}{C_\delta^2(2M)^\beta}\varepsilon_k^{1+\frac{\beta^2}{2+\beta}},
\end{multline}
where the equality follows from $\varepsilon_{k+1} = \varepsilon_k/2$, and the second inequality follows from $\varepsilon \leq 2 \varepsilon_k,~\forall k\in\{0,1,2,\cdots,K-1\}$.
For the objective bound, we have by \eqref{eq:objective}, for any $k\geq k'$,
\begin{align}
f( \overline{\mf x}^{(k+1)} ) - f^*
\leq& \|\mc P_{\mf A}\lambda_0^*\|\cdot\|\mf A\overline{\mf x}^{(k+1)} - \mf b\| 
+ \frac{\sigma_{\max}(\mf A^T\mf A)}{2\mu_{k+1} S_T}\|\lambda^{(k)}_*-\lambda^{(k)}\|^2 + \frac{\mu_{k+1} D^2}{2} \nonumber \\
\leq& \|\mc P_{\mf A}\lambda_0^*\|\frac{24(1+C_\delta)}{C_\delta^2(2M)^\beta}\varepsilon_k^{1+\frac{\beta^2}{2+\beta}}
+ \frac{\sigma_{\max}(\mf A^T\mf A)}{2\mu_{k+1} S_T}\|\lambda^{(k)}_*-\lambda^{(k)}\|^2 + \frac{\mu_{k+1} D^2}{2}, \label{eq:inter-objective}
\end{align}
where the second inequality follows from \eqref{eq:constraint-bound}.
Now, for the second term on the right hand side, we have
\begin{multline*}
 \frac{\sigma_{\max}(\mf A^T\mf A)}{2\mu_{k+1} S_T}\|\lambda^{(k)}_*-\lambda^{(k)}\|^2
 \leq \frac{\varepsilon^{4/(2+\beta)}(\varepsilon_k+\varepsilon)^{2\beta}}{4\varepsilon_{k+1}C_\delta^2(2M)^\beta}
 = \frac{\varepsilon^{4/(2+\beta)}(\varepsilon_k+\varepsilon)^{2\beta}}{2\varepsilon_{k}C_\delta^2(2M)^\beta}\\
 \leq\frac{(4\varepsilon_k)^{4/(2+\beta)}(3\varepsilon_k)^{2\beta}}{2\varepsilon_{k}C_\delta^2(2M)^\beta}
 \leq\frac{6\varepsilon_k^{1+\frac{2\beta(1+\beta)}{2+\beta}}}{C_\delta^2(2M)^\beta},
\end{multline*}
where first inequality follows from \eqref{eq:inter-step-bound},
the equality follows from $\varepsilon_{k+1} = \varepsilon_k/2$, and the second inequality follows from $\varepsilon \leq 2 \varepsilon_k,~\forall k\in\{0,1,2,\cdots,K-1\}$. Substituting this bound and $\mu_{k+1}= \varepsilon_{k+1}/D^2 = \varepsilon_{k}/2D^2$ into \eqref{eq:inter-objective} gives for any $k\geq k'$,
\begin{equation}\label{eq:objective-bound}
f( \overline{\mf x}^{(k+1)} ) - f^*
\leq  \frac{24\|\mc P_{\mf A}\lambda_0^*\|(1+C_\delta)}{C_\delta^2(2M)^\beta}\varepsilon_k^{1+\frac{\beta^2}{2+\beta}}+\frac{6}{C_\delta^2(2M)^\beta}\varepsilon_k^{1+\frac{2\beta(1+\beta)}{2+\beta}}+\frac14\varepsilon_k.
\end{equation}
Taking $k = K-1$ in \eqref{eq:constraint-bound} and \eqref{eq:objective-bound} with the fact that $\varepsilon_{K-1}\leq \varepsilon\leq1$ gives the desired result.
\end{proof}

\subsection{Proof of Lemma \ref{lem:sol-sub}}\label{proof-explicit}
\begin{proof}
For simplicity of notations, we let $\mf x_i^* = \mf x_i(\widehat{\lambda}_t)$. 
First of all, let $H_C(\mf x_i)$ be the indicator function for the set $C:=\l\{\mf x_i:~\|\mf x_i - \mf b_i\|\leq D\r\}$, which takes 0 if $\mf x_i\in C$ and $+\infty$ otherwise.  Then, the optimization problem \eqref{eq:dec-sub} can be equivalently written as an unconstrained problem:
\begin{equation}\label{eq:dec-sub-2}
\mf x_i^* = \text{argmax}_{\mf x_i\in\mb R^d}-\frac{\mu}{2}\l\| \mf x_i - \mf a_i \r\|^2 - \|\mf x_i - \mf b_i\| - H_C(\mf x_i) =: g(\mf x_i),
\end{equation}
where $\mf a_i = \widetilde{\mf x}_i-\frac{1}{\mu}\sum_{j\in\mc N_i}\mf W_{ji}\lambda_{t,j}$. Since $\mf x_i^*$ is the solution, by the optimality condition, 
$0\in\partial g\l(\mf x_i^*\r)$, where $\partial g(\mf x_i^*)$ denotes the set of subdifferentials of $g$ at point  $\mf x_i^*$, i.e.
\[
0\in  \mu\l( \mf x_i^* - \mf a_i \r) + \partial \|\mf x_i^* - b_i\| + \mathcal{N}_C(\mf x_i^*),
\]
 where for any $\mf x\in\mb R^d$,
$$\partial \|\mf x - \mf b_i\| = 
\begin{cases}
\l\{\frac{\mf x - \mf b_i}{\|\mf x - \mf b_i\|}\r\},~~&\text{if}~~\mf x\neq \mf b_i,\\
\l\{\mf v\in\mb R^d, \|\mf v\|\leq 1\r\},~~&\text{otherwise},
\end{cases}
$$ 
and $\mathcal{N}_C(\mf x)$ is the normal cone of the set $C=\l\{\mf x_i:~\|\mf x_i - \mf b_i\|\leq D\r\}$ at the point $\mf x$, i.e.
\[
\mathcal{N}_C(\mf x) := \l\{ \mf v\in \mb R^d:~\mf v^T\mf x\geq \mf v^T\mf y,~\forall \mf y\in C \r\}.
\]
This is equivalent to
\begin{equation}\label{eq:sol-condition}
-\mu\l( \mf x_i^* - \mf a_i \r) - \mf h\in \mathcal{N}_C(\mf x_i^*),
\end{equation}
for some $\mf h\in\partial \|\mf x_i^* - \mf b_i\|$. Note that the function $g(\cdot)$ is a strongly concave function, thus, the solution to the maximization problem \eqref{eq:dec-sub-2} is unique, which implies as long as one can find one $x_i^*$ and $\mf h$ satisfying \eqref{eq:sol-condition}, such a $x_i^*$ must be the only solution.  To this point, we consider the following three cases:
\begin{enumerate}
\item If $\|\mf b_i - \mf a_i\|\leq 1/\mu$. Let $\mf x_i^* = \mf b_i$ and $\mf h = \mu(\mf a_i  - \mf b_i)$, then, $\mathcal{N}_C(\mf x_i^*)=\l\{0\r\}$ and $\|\mf h\|\leq 1 $ and $-\mu\l( \mf x_i^* - \mf a_i \r) - \mf h = 0\in \mathcal{N}_C(\mf x_i^*)$.
\item If $1/\mu<\|\mf b_i - \mf a_i\|\leq 1/\mu+ D$, then, one can take 
$$\mf x_i^* = \mf b_i - \frac{\mf b_i - \mf a_i}{\|\mf b_i - \mf a_i\|}\l(\|\mf b_i - \mf a_i\| - \frac{1}{\mu}\r)=\mf a_i +\frac{\mf b_i - \mf a_i}{\|\mf b_i - \mf a_i\|}\frac{1}{\mu}$$ 
and 
$\mf h = \frac{\mf a_i - \mf b_i}{\|\mf a_i-\mf b_i\|}$. Note that $\|\mf x_i^* - \mf b_i\| = \|\mf a_i-\mf b_i\| - 1/\mu \leq D$, 
which again gives $\mathcal{N}_C(\mf x_i^*)=\l\{0\r\}$ and $-\mu\l( \mf x_i^* - \mf a_i \r) - \mf h = 0 \in \mathcal{N}_C(\mf x_i^*)$.
\item If $\|\mf b_i - \mf a_i\|> 1/\mu+ D$. Then, let $\mf x_i^* = \mf b_i - \frac{\mf b_i - \mf a_i}{\|\mf b_i - \mf a_i\|}D$ and $\mf h = \frac{\mf a_i - \mf b_i}{\|\mf a_i - \mf b_i\|}$, which gives
\begin{align*}
-\mu\l( \mf x_i^* - \mf a_i \r) - \mf h =& -\mu\l( \mf b_i - \mf a_i - \frac{\mf b_i - \mf a_i}{\|\mf b_i - \mf a_i\|}D \r) - \frac{\mf a_i - \mf b_i}{\|\mf a_i - \mf b_i\|}\\
=& -\mu(\mf b_i - \mf a_i)\l( 1- \frac{D}{\|\mf b_i - \mf a_i\|} \r) - \frac{\mf a_i - \mf b_i}{\|\mf a_i - \mf b_i\|}\\
=& -\mu(\mf b_i - \mf a_i)\l( 1- \frac{D+1/\mu}{\|\mf b_i - \mf a_i\|} \r) = \mu\l( 1- \frac{D+1/\mu}{\|\mf b_i - \mf a_i\|} \r)(\mf a_i - \mf b_i). \\
\end{align*}
Note that the normal $\mc N_C(\mf x_i^*)  = \l\{ c(\mf a_i-\mf b_i),c\geq0 \r\}$, it follows $-\mu\l( \mf x_i^* - \mf a_i \r) - \mf h\in \mc N_C(\mf x_i^*)$.
\end{enumerate}
Overall, we finish the proof.
\end{proof}

\subsection{Proof of Theorem \ref{thm:geo-local-bound}}\label{proof-local-geo}
Since the null space of $\mf A$ is non-empty and the set 
$$\mathcal{X}:=\l\{\mf x\in\mb R^{nd}: ~\|\mf x_i - \mf b_i\|\leq D,i=1,2,\cdots,n \r\}$$ 
is compact, strong duality holds with respect to (\ref{dec-prob-1}-\ref{dec-prob-2}). In view of Assumption \ref{assumption}(c)(d), we aim to show that the Lagrange dual of (\ref{dec-prob-1}-\ref{dec-prob-2}) satisfies the local error bound condition \eqref{eq:local-error-bound} and the set of optimal Lagrange multiplier is unique up to null space of $\mf A$.

We start by rewriting (\ref{dec-prob-1}-\ref{dec-prob-2}) as follows: Let $\mf y_i = \mf x_i - \mf b_i$, and $\mf y = [\mf y_1^T,~\mf y_2^T,\cdots,~\mf y_n^T]^T$, then, (\ref{dec-prob-1}-\ref{dec-prob-2}) is equivalent to
\begin{align*}
\min&~~\sum_{i=1}^n\|\mf y_i\|  \\
s.t.&~~ \mf A\mf y + \mf A\mf b = 0, \|\mf y_i\|\leq D,~i=1,2,\cdots,n.
\end{align*}
Then, for any $\lambda\in\mb R^{nd}$, the Lagrange dual function
\begin{align*}
F(\lambda) =&  \max_{\|\mf y_i\|\leq D,~i=1,2,\cdots,n}-\sum_{i=1}^n\|\mf y_i\| - \dotp{\lambda}{\mf A\mf y + \mf A\mf b }\\
=& \underbrace{ \max_{\|\mf y_i\|\leq D,~i=1,2,\cdots,n}-\sum_{i=1}^n\l(\|\mf y_i\| + \dotp{\lambda}{\mf A_{[i]}\mf y_i }\r)}_{\text{(I)}}  
- \dotp{\lambda}{\mf A\mf b},
\end{align*}
where 
$$\mf A_{[i]}= [\mf W_{1i}~\mf W_{2i}~\cdots~\mf W_{ni}]^T
$$ 
$i$-th column block of the matrix $\mf A$ corresponding to $\mf y_i$. Note that maximization of (I) is separable with respect to the index $i$, we have for any $i\in\{1,2,\cdots,n\}$, 
\begin{multline*}
\max_{\|\mf y_i\|\leq D} -\|\mf y_i\| - \dotp{\lambda}{\mf A_{[i]}\mf y_i } = 
\max_{\|\mf y_i\|\leq D} -\|\mf y_i\| - \dotp{\mf A_{[i]}^T\lambda}{\mf y_i }\\
=
\begin{cases}
0,&~~\text{if}~\|\mf A_{[i]}^T\lambda\|\leq1\\
(\|\mf{A}_{[i]}^T\lambda\|-1)\cdot D,&~~\text{otherwise}.
\end{cases}
\end{multline*}
Thus, one can write $F(\lambda)$ as follows 
\begin{equation}\label{eq:dual-geo-median}
F(\lambda) = - \dotp{\mf A^T\lambda}{\mf b} + D\sum_{i=1}^n(\|\mf{A}_{[i]}^T\lambda\|-1)\cdot I\l(\|\mf{A}_{[i]}^T\lambda\|>1\r),
\end{equation}
where $I\l(\|\mf{A}_{[i]}^T\lambda\|>1\r)$ is the indicator function which takes 1 if $\|\mf{A}_{[i]}^T\lambda\|>1$ and 0 otherwise. To this point, we make another change of variables by setting $\nu_i = \mf{A}_{[i]}^T\lambda,~i=1,2,\cdots,n$ and $\nu = [\nu_1^T~\nu_2^T~\cdots~\nu_n^T]^T$. Note that $\{\mf A^T\lambda:\lambda\in\mb R^{nd}\} = \mc R(\mf A^T)$.
By the null space property \eqref{eq:null-space}, the range space of $\mf A^T$ has the following explicit representation:
\begin{equation}\label{eq:range-space}
\mc R(\mf A^T) = \l\{ \mf \nu\in\mb R^{nd}:~\mf u = [\nu_1^T,\cdots,\nu_n^T]^T,~\sum_{i=1}^n\nu_i = 0\r\}.
\end{equation}
Thus, minimizing \eqref{eq:dual-geo-median} is equivalent to solving the following constrained optimization problem:
\begin{align}
\min_{\nu\in\mb R^{nd}}&~~ - \dotp{\nu}{\mf b} + D\sum_{i=1}^n(\|\nu_i\|-1)\cdot I\l(\|\nu_i\|>1\r),\label{change-prob-1}\\
s.t.&~~\sum_{i=1}^n\nu_i = 0,\label{change-prob-2}
\end{align}
Denote 
\begin{equation}\label{eq:def-G}
G(\nu) = - \dotp{\nu}{\mf b} + D\sum_{i=1}^n(\|\nu_i\|-1)\cdot I\l(\|\nu_i\|>1\r).
\end{equation}
The following lemma, which characterizes the set of solutions to (\ref{change-prob-1}-\ref{change-prob-2}), paves the way of our analysis.

\begin{lemma}\label{lem:solution-set-1}
The solution to (\ref{change-prob-1}-\ref{change-prob-2}) is attained within the region: $\mc B = \{\nu\in\mb R^{nd}, \|\nu_i\|\leq1,~\forall i\}$. Furthermore, for any $\nu'\in\mb R^{nd}$ satisfying \eqref{change-prob-2} but not in $\mc B$, there exists a point $\overline{\nu}'\in\mc B$ such that \eqref{change-prob-2} is satisfied and
\[
G(\nu') - G(\overline{\nu}')\geq \l(\max_{i,j}\|\mf b_i - \mf b_j\|\r) \|\nu' - \overline{\nu}'\|.
\]
\end{lemma}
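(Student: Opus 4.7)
I will prove the quantitative second claim, from which the first statement follows automatically: if some minimizer $\nu^{\ast}$ of (\ref{change-prob-1})--(\ref{change-prob-2}) lay outside $\mc B$, the second claim would produce a strictly better feasible point, contradicting optimality. So fix any $\nu'$ with $\sum_i \nu'_i = 0$ and $\nu' \notin \mc B$, and write $M := \max_{i,j}\|\mf b_i - \mf b_j\|$; by hypothesis $D \geq 2nM$ with $n \geq 3$.

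The central structural observation is a balance relation coming from the linear constraint. Whenever $\|\nu'_{i^\ast}\| > 1$ for some index $i^\ast$, set $u := \nu'_{i^\ast}/\|\nu'_{i^\ast}\|$; taking the inner product of $\sum_i \nu'_i = 0$ against $u$ yields $\sum_{j \neq i^\ast}\langle u, \nu'_j\rangle = -\|\nu'_{i^\ast}\| < 0$, so some coordinate $j^\ast$ must satisfy $\langle u, \nu'_{j^\ast}\rangle < 0$. My strategy is to exploit this via a ``pair-swap'' direction $d = -u\,e_{i^\ast} + u\,e_{j^\ast}$ (meaning $-u$ in block $i^\ast$ and $u$ in block $j^\ast$); it preserves the sum constraint and has $\|d\| = \sqrt 2$. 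The one-sided directional derivative of $G$ along $d$ splits into three pieces:
\begin{itemize}
\item[(a)] the linear part contributes $\langle u, \mf b_{i^\ast} - \mf b_{j^\ast}\rangle$, bounded in absolute value by $M$;
\item[(b)] the penalty at $i^\ast$ contributes exactly $-D$, since $\|\nu'_{i^\ast}\|>1$ and we move toward the origin;
\item[(c)] the penalty at $j^\ast$ contributes $\leq 0$: if $\|\nu'_{j^\ast}\| < 1$ the penalty is zero in a neighborhood, and if $\|\nu'_{j^\ast}\| \geq 1$ then $\langle u, \nu'_{j^\ast}\rangle < 0$ makes $\|\nu'_{j^\ast} + \alpha u\|$ locally decreasing.
\end{itemize}
Thus the directional derivative is at most $M - D$, and the descent rate per unit Euclidean distance is at least $(D - M)/\sqrt 2$. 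The hypothesis $D \geq 2nM \geq 6M$ yields $(D - M)/\sqrt 2 \geq M$, matching the required rate.

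I then iterate this atomic step. Starting from $\nu^{(0)} = \nu'$, while $\nu^{(k)} \notin \mc B$, pick $i^\ast_k$ to be a coordinate with largest $\|\nu^{(k)}_i\|$, find $j^\ast_k$ via the balance argument, and advance by the maximal step $\alpha_k$ preserving conditions (b)--(c) (i.e.\ $\|\nu^{(k)}_{i^\ast_k} - \alpha u\| \geq 1$ and the penalty at $j^\ast_k$ does not become positive). The per-step inequality $G(\nu^{(k)}) - G(\nu^{(k+1)}) \geq M\,\|\nu^{(k)} - \nu^{(k+1)}\|$ telescopes, and the triangle inequality $\|\nu' - \overline{\nu}'\| \leq \sum_k \|\nu^{(k)} - \nu^{(k+1)}\|$ delivers the claimed global bound with $\overline{\nu}'$ taken to be the terminal iterate.

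The main obstacle I anticipate is proving termination cleanly, because the identity of $(i^\ast_k, j^\ast_k)$ can switch between substeps and several coordinates may violate the unit-ball constraint simultaneously. I plan to track the potential $\Phi(\nu) := \sum_i (\|\nu_i\| - 1)_+^2$ and show it strictly decreases at each atomic step, so that the cardinality of the violating set $\{i: \|\nu^{(k)}_i\| > 1\}$ drops after finitely many substeps. Alternatively, a continuous-time reformulation (a piecewise-linear flow $\dot\gamma = d$ with $d$ recomputed at regime transitions) may be cleaner, since the total arclength is bounded above by $[G(\nu') - \inf_{\sum=0} G]/M < \infty$, forcing the flow to accumulate at a point in $\mc B$.
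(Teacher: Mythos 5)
Your per-step computation is sound: the pair-swap direction preserves $\sum_i\nu_i=0$, the directional-derivative bound $M-D$ holds along the whole admissible segment, and $D\geq 2nM$ with $n\geq 3$ does give a descent rate of at least $M$ per unit Euclidean length. The genuine gap is exactly the one you flag and then leave unresolved: $\overline{\nu}'$ is defined only as the ``terminal iterate'' of a process you have not shown to terminate or converge. Strict decrease of $\Phi(\nu)=\sum_i(\|\nu_i\|-1)_+^2$ per atomic step does not give finite termination: the step length $\alpha_k$ is capped by whichever of the constraints in (b)--(c) binds first, the binding index $j^\ast_k$ (and even $i^\ast_k$) can keep switching, and without a uniform lower bound on the per-step decrease you cannot exclude infinitely many ever-shorter steps whose limit might not lie in $\mc B$ (a Zeno-type scenario). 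The continuous-time variant has the same issue in different clothing: the vector field is discontinuous and non-uniquely defined (the choice of $j^\ast$ is not canonical), so existence of the flow, the passage of the per-step inequality to the limit, and the fact that accumulation points lie in $\mc B$ all require arguments you have not supplied (you would also need $\inf_{\sum\nu_i=0}G>-\infty$, which is true but unproven here). As written, the second claim of the lemma — the existence of a specific $\overline{\nu}'\in\mc B$ with the stated inequality — is not established.

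For comparison, the paper avoids all of this with a one-shot construction: take $L=\max_j\|\nu'_j\|>1$ and set $\overline{\nu}'=\nu'/L$, which automatically preserves $\sum_i\nu_i=0$ and lands in $\mc B$. Then a direct computation — use the sum constraint to rewrite the linear term as $\sum_{i=1}^{n-1}\langle\overline{\nu}'_i-\nu'_i,\mf b_i-\mf b_n\rangle$, bound it by Cauchy--Schwarz, lower-bound the penalty by $(L-1)D$, and invoke $D\geq 2n\max_{i,j}\|\mf b_i-\mf b_j\|$ together with $\|\nu'_i-\overline{\nu}'_i\|=(L-1)\|\overline{\nu}'_i\|$ — yields the inequality in a few lines, with no iteration and no termination argument. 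If you want to salvage your route, the cleanest fix is to replace the greedy iteration by an explicit finite construction of the path (or simply adopt a global contraction such as the paper's rescaling), since the local descent estimate alone does not produce the point $\overline{\nu}'$.
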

\begin{proof}[Proof of Lemma \ref{lem:solution-set-1}]
Consider any $\nu'\in\mb R^{nd}$ not in the set $\mc B$, then, 
define the set $\mathcal{J}$ as
the set of coordinates $j$ in $\{1, ...., n\}$ such that $\|\nu'_j\|>1$.   Since $\nu'$
is not in the set $\mathcal{B}$, we know $\mathcal{J}$ is nonempty. 
Then, let $L := \max_{j\in\mc J}\|\nu_j'\|>1$. Consider the vector $\overline{\nu}' := \nu'/L$, then, since $\nu'$ is a solution to (\ref{change-prob-1}-\ref{change-prob-2}), $\sum_{i=1}^n\nu_i' = 0$, which implies $\sum_{i=1}^n\overline{\nu}_i'  = 0$. Furthermore, we obviously have $\|\overline{\nu}_i'\|\leq1,~\forall i$. Now, we are going to show that $G(\nu')>G(\overline{\nu}')$, thereby reaching a contradiction. Consider the difference
\begin{align*}
&G(\nu') - G(\overline{\nu}')\\
 =& \dotp{\overline{\nu}'-\nu'}{\mf b} + D\sum_{i=1}^n(\|\nu_i'\|-1)\cdot I\l(\|\nu_i'\|>1\r)\\
=& \sum_{i=1}^{n-1}\dotp{\overline{\nu}'_i-\nu'_i}{\mf b_i - \mf b_n}+ D\sum_{i=1}^n(\|\nu_i'\|-1)\cdot I\l(\|\nu_i'\|>1\r)
~~\l(\text{by the fact}~\sum_{i=1}^n\overline{\nu}_i'= \sum_{i=1}^n\nu_i' = 0 \r) \\
\geq& \sum_{i=1}^{n-1}\dotp{\overline{\nu}'_i-\nu'_i}{\mf b_i - \mf b_n} + (L-1)D\\
\geq& -\sum_{i=1}^{n-1}\|\overline{\nu}'_i-\nu'_i\| \cdot \|\mf b_i - \mf b_n\| + (L-1)D~~(\text{by Cauchy-Schwarz})\\
\geq& - \l(\max_{i,j}\|\mf b_i - \mf b_j\|\r)\sum_{i=1}^{n-1}\|\overline{\nu}'_i-\nu'_i\|  + (L-1)D\\
=& - \l(\max_{i,j}\|\mf b_i - \mf b_j\|\r)\sum_{i=1}^{n-1}\|\overline{\nu}'_i\|(L-1)  + (L-1)D~~\l(\text{By definition}~\overline{\nu}' := \nu'/L\r)\\
\geq& - \l(\max_{i,j}\|\mf b_i - \mf b_j\|\r)\sum_{i=1}^{n-1}\|\overline{\nu}'_i\|(L-1)+ 2(L-1)\cdot n\cdot \max_{i,j}\|\mf b_i - \mf b_j\|~~
\l(D\geq 2 n\cdot \max_{i,j}\|\mf b_i - \mf b_j\|\r)\\
\geq&\l(\max_{i,j}\|\mf b_i - \mf b_j\|\r)\sum_{i=1}^{n}\|\overline{\nu}'_i\|(L-1)~~(\text{by the fact}~\|\overline{\nu}'_i\|\leq1)\\
\geq&\l(\max_{i,j}\|\mf b_i - \mf b_j\|\r)\sum_{i=1}^{n}\|\nu'_i - \overline{\nu}'_i\|
\geq \l(\max_{i,j}\|\mf b_i - \mf b_j\|\r)\|\nu' - \overline{\nu}'\|,~~\l(\text{By definition}~\overline{\nu}' := \nu'/L\r)
\end{align*}
and the lemma follows.
\end{proof}

By the previous lemma, in order to characterize the set of solutions to (\ref{change-prob-1}-\ref{change-prob-2}), it is enough to look at the following more restricted problem:
\begin{align}
\min_{\nu\in\mb R^{nd}}&~~ - \dotp{\nu}{\mf b},\label{another-prob-1}\\
s.t.&~~\sum_{i=1}^n\nu_i = 0,\label{another-prob-2}\\
&~~\|\nu_i\|^2\leq1,~i=1,2,\cdots,n,   \label{another-prob-3}
\end{align}
where we used the fact that $G(\nu) = -\dotp{\nu}{\mf b}$ when $\|\nu_i\|^2\leq1,~\forall i$. This is a quadratic constrained problem. Now, we show the key lemma that $G(\nu)$ satisfies the local error bound with parameter $\beta=1/2$ over the restricted set \eqref{another-prob-2} and \eqref{another-prob-3}.

\begin{lemma}\label{lem:local-error-G}
The solution to (\ref{another-prob-1}-\ref{another-prob-3}) is \textit{unique}. Furthermore,
let $\nu^*\in\mb R^{nd}$ be the solution to (\ref{another-prob-1}-\ref{another-prob-3}). 
There exists a constant $C_0>0$ such that for any $\nu\in\mb R^{nd}$ satisfying (\ref{another-prob-2}-\ref{another-prob-3}),
\[
\|\nu - \nu^*\|\leq C_0\l(G(\nu) - G(\nu^*)\r)^{1/2}.
\]
\end{lemma}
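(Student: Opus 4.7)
The plan is to exploit the KKT conditions of the quadratically constrained linear program (\ref{another-prob-1}-\ref{another-prob-3}). Because the objective is linear in $\nu$ and the active ball constraints are strongly convex, $G(\nu)-G(\nu^*)$ will admit a clean expression in terms of the Lagrange multipliers of the ball constraints, and combining this with the elementary identity $\|\nu_i\|\leq \|\nu_i^*\|=1 \Rightarrow \la \nu_i^*-\nu_i,\nu_i^*\ra \geq \tfrac12\|\nu_i^*-\nu_i\|^2$ will deliver exactly the quadratic growth bound with $\beta=1/2$.

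First I would form the Lagrangian of (\ref{another-prob-1}-\ref{another-prob-3}) using multipliers $\mu_i\geq 0$ for the ball constraints and $\eta\in\mb R^d$ for the sum-to-zero constraint. Stationarity gives $\mf b_i = \mu_i^*\nu_i^* + \eta^*$ for each $i$, and complementary slackness gives $\mu_i^*(\|\nu_i^*\|^2-1)=0$. Using the assumption that $\mf b_1,\dots,\mf b_n$ are distinct and non-collinear with $n\geq 3$, I would argue that $\eta^*\neq \mf b_i$ for every $i$ (since the first-order optimality condition $\sum_i \nu_i^* = 0$ becomes the subgradient optimality for the geometric median of the $\mf b_i$'s, and under non-collinearity the median lies strictly inside the convex hull and does not coincide with any data point). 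This forces $\mu_i^* = \|\mf b_i-\eta^*\|>0$, hence $\|\nu_i^*\|=1$ and the closed form $\nu_i^* = (\mf b_i-\eta^*)/\|\mf b_i-\eta^*\|$. Since the geometric median is unique under the non-collinearity hypothesis, $\eta^*$ and therefore $\nu^*$ are unique.

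For the quadratic error bound, for any $\nu$ satisfying (\ref{another-prob-2}-\ref{another-prob-3}) I would compute
\begin{align*}
G(\nu)-G(\nu^*)
&= \la \nu^*-\nu,\mf b\ra = \sum_{i=1}^n \la \nu_i^*-\nu_i,\,\mu_i^*\nu_i^*+\eta^*\ra \\
&= \sum_{i=1}^n \mu_i^*\,\la \nu_i^*-\nu_i,\nu_i^*\ra,
\end{align*}
where the $\eta^*$-terms vanish because $\sum_i \nu_i = \sum_i \nu_i^* = 0$. From $\|\nu_i\|\leq 1 = \|\nu_i^*\|$ one has $\la \nu_i^*-\nu_i,\nu_i^*\ra = 1 - \la \nu_i,\nu_i^*\ra \geq \tfrac12\|\nu_i^*-\nu_i\|^2$. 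Summing and setting $c_0 := \min_i \mu_i^* = \min_i \|\mf b_i-\eta^*\|>0$ yields $G(\nu)-G(\nu^*)\geq (c_0/2)\|\nu-\nu^*\|^2$, from which $C_0 := \sqrt{2/c_0}$ works.

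The main obstacle I anticipate is rigorously proving that $\eta^*\neq \mf b_i$ for every $i$: if the geometric median happens to sit at some $\mf b_k$, then $\mu_k^*=0$ is permitted by KKT, the $k$-th ball constraint could be inactive, and the quadratic growth argument at coordinate $k$ breaks down since $\mu_k^*\|\nu_k^*-\nu_k\|^2$ is zero. I plan to handle this using the non-collinearity hypothesis, which implies $\|\sum_{j\neq k}(\mf b_j-\mf b_k)/\|\mf b_j-\mf b_k\|\|>1$ and thereby forbids the median from coinciding with any $\mf b_k$; the non-collinearity assumption on $n\geq 3$ distinct points is essentially imposed for exactly this reason. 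If the degenerate case does occur in special configurations, one can still salvage uniqueness by showing $\nu_k^*$ must be $-\sum_{j\neq k}\nu_j^*$ and then obtain the quadratic bound from the remaining coordinates together with the strict slack $1-\|\nu_k^*\|>0$ on the unconstrained coordinate.
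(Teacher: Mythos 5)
Your KKT computation is correct, but it only covers the case in which every ball multiplier is strictly positive, i.e. the geometric median does not coincide with any $\mf b_k$; in that regime your argument is sound and in fact more direct than the paper's treatment (the paper arrives at the same elementary inequality $\|\nu_j-\nu_j^*\|^2\le 2\la\nu_j^*-\nu_j,\nu_j^*\ra$, but generates the positive multipliers by linearizing the feasibility system and invoking Motzkin's transposition theorem rather than by writing down the KKT conditions). The genuine gap is the way you dismiss the degenerate case. It is simply false that non-collinearity of $n\ge 3$ distinct points implies $\l\|\sum_{j\neq k}\frac{\mf b_j-\mf b_k}{\|\mf b_j-\mf b_k\|}\r\|>1$ for every $k$: take the three vertices of an equilateral triangle together with its centroid; the unit vectors from the centroid to the vertices sum to zero, so the geometric median is the data point sitting at the centroid, yet the four points are distinct and non-collinear. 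Hence, under exactly the assumptions of the paper, the situation $\eta^*=\mf b_k$, $\mu_k^*=0$ can occur, and it is precisely this situation to which the paper devotes its entire Case 1; your main argument breaks there because the coefficient multiplying $\|\nu_k-\nu_k^*\|^2$ vanishes.

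Your fallback sketch points in the right direction but is not yet a proof, and one of its ingredients is wrong: the ``strict slack'' $1-\|\nu_k^*\|>0$ need not hold (the boundary case where the unit vectors from $\mf b_k$ sum to a vector of norm exactly one is possible) and is in any event not what is needed. The correct repair, which is what the paper's Case 1 does, is: (i) at most one multiplier can vanish, since $\mu_k^*=\mu_l^*=0$ would force $\mf b_k=\eta^*=\mf b_l$, contradicting distinctness; hence $\mu_j^*>0$ and $\nu_j^*=(\mf b_j-\mf b_k)/\|\mf b_j-\mf b_k\|$ for all $j\neq k$, and $\nu_k^*=-\sum_{j\neq k}\nu_j^*$ is then determined, which gives uniqueness; (ii) your identity still yields quadratic growth in the coordinates $j\neq k$, and the missing $k$-th block is controlled through the sum-to-zero constraint via Cauchy--Schwarz, $\|\nu_k-\nu_k^*\|^2\le (n-1)\sum_{j\neq k}\|\nu_j-\nu_j^*\|^2$, so that $G(\nu)-G(\nu^*)\ge \frac{\min_{j\neq k}\mu_j^*}{2n}\|\nu-\nu^*\|^2$. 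With this case carried out explicitly your proof is complete; as written, it establishes the lemma only under an additional hypothesis that the paper's assumptions do not guarantee.
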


The proof of Lemma \ref{lem:local-error-G} is somewhat lengthy, but it follows a simple intuition that if the solution point lies on the boundary of a ball, then, sliding a point away from the solution results in a locally quadratic growth of the objective when it is linear. We split the proof into two cases below.

\subsubsection{Proof of Lemma \ref{lem:local-error-G}: Case 1}

\textbf{Case 1:} The solution of the original geometric median (\ref{dec-prob-1}-\ref{dec-prob-2}) is achieved at one of the vectors $\{\mf b_1,~\mf b_2,~\cdots,~\mf b_n\}$.

Assume without loss of generality that it is achieved at $\mf x_1 = \mf x_2 = \cdots = \mf x_n= \mf b_n$, then, one know that the minimum of (\ref{dec-prob-1}-\ref{dec-prob-2}) is 
$\sum_{i=1}^{n-1}\|\mf b_i - \mf b_n\|$. Furthermore, since we assume $\{\mf b_1,~\mf b_2,~\cdots,~\mf b_n\}$ is not co-linear, the solution is unique, and thus, for all feasible $\mf x\neq [\mf b_n^T,~\mf b_n^T,~\cdots,~\mf b_n^T]^T$, $\sum_{i=1}^{n}\|\mf x_i - \mf b_i\|>\sum_{i=1}^{n-1}\|\mf b_n - \mf b_i\|$.

First, one can get rid of constraint \eqref{another-prob-2} in (\ref{another-prob-1}-\ref{another-prob-3}) by substituting $\nu_n = -\sum_{i=1}^{n-1}\nu_i$ and equivalently form the following optimization problem:
\begin{align}
\min_{\nu\in\mb R^{nd}}&~~ - \sum_{i=1}^{n-1}\dotp{\nu_i}{\mf b_i-\mf b_n},\label{another-prob-1'}\\
s.t.
&~~\|\nu_i\|^2\leq1,~i=1,2,\cdots,n-1,   \label{another-prob-2'}\\
&~~\l\| \sum_{i=1}^{n-1}\nu_i \r\|\leq1. \label{another-prob-3'}
\end{align}
Then, to show the uniqueness of the solution to (\ref{another-prob-1}-\ref{another-prob-3}), it is enough to show the solution to (\ref{another-prob-1'}-\ref{another-prob-3'}) is unique. To see the the uniqueness, suppose we temporarily delete constraint \eqref{another-prob-3'}, then we obtain a relaxed problem:
\begin{align*}
\min_{\nu\in\mb R^{nd}}&~~ - \sum_{i=1}^{n-1}\dotp{\nu_i}{\mf b_i-\mf b_n},\\
s.t.
&~~\|\nu_i\|^2\leq1,~i=1,2,\cdots,n-1,  
\end{align*}
which is separable and we know trivially that for each index $i$, the solution to 
\[
\min_{\nu_i\in\mb R^{d}} - \dotp{\nu_i}{\mf b_i-\mf b_n},~~s.t.~~\|\nu_i\|^2\leq1,
\]
is attained \textit{uniquely} at $\nu_i^* = \frac{\mf b_i-\mf b_n}{\|\mf b_i-\mf b_n\|}$. This gives the objective value $-\sum_{i=1}^{n-1}\|\mf b_n - \mf b_i\|$ to the relaxed problem. On the other hand, by strong duality, the optimal objective of the original problem (\ref{another-prob-1}-\ref{another-prob-3}) is also $-\sum_{i=1}^{n-1}\|\mf b_n - \mf b_i\|$.
The fact that the optimal objective does not change even when adding an extra constraint $\l\| \sum_{i=1}^{n-1}\nu_i \r\|\leq1$ implies that
$\nu_i^* = \frac{\mf b_i-\mf b_n}{\|\mf b_i-\mf b_n\|},~i=1,2,\cdots,n-1$ is feasible with respect to (\ref{another-prob-1}-\ref{another-prob-3}), and
 the solution to (\ref{another-prob-1}-\ref{another-prob-3}) cannot be attained at any feasible point other than  $\nu_i^* = \frac{\mf b_i-\mf b_n}{\|\mf b_i-\mf b_n\|},~i=1,2,\cdots,n-1$. As a consequence, the solution to (\ref{another-prob-1}-\ref{another-prob-3}) is also unique, which is 
 $\nu_i^* =  \frac{\mf b_i-\mf b_n}{\|\mf b_i-\mf b_n\|},~i=1,2,\cdots,n-1$ and $\nu_n^* = -\sum_{i=1}^{n-1}\nu_i$.
 
 \begin{figure}[htbp]
 \centering
   \includegraphics[width=4in]{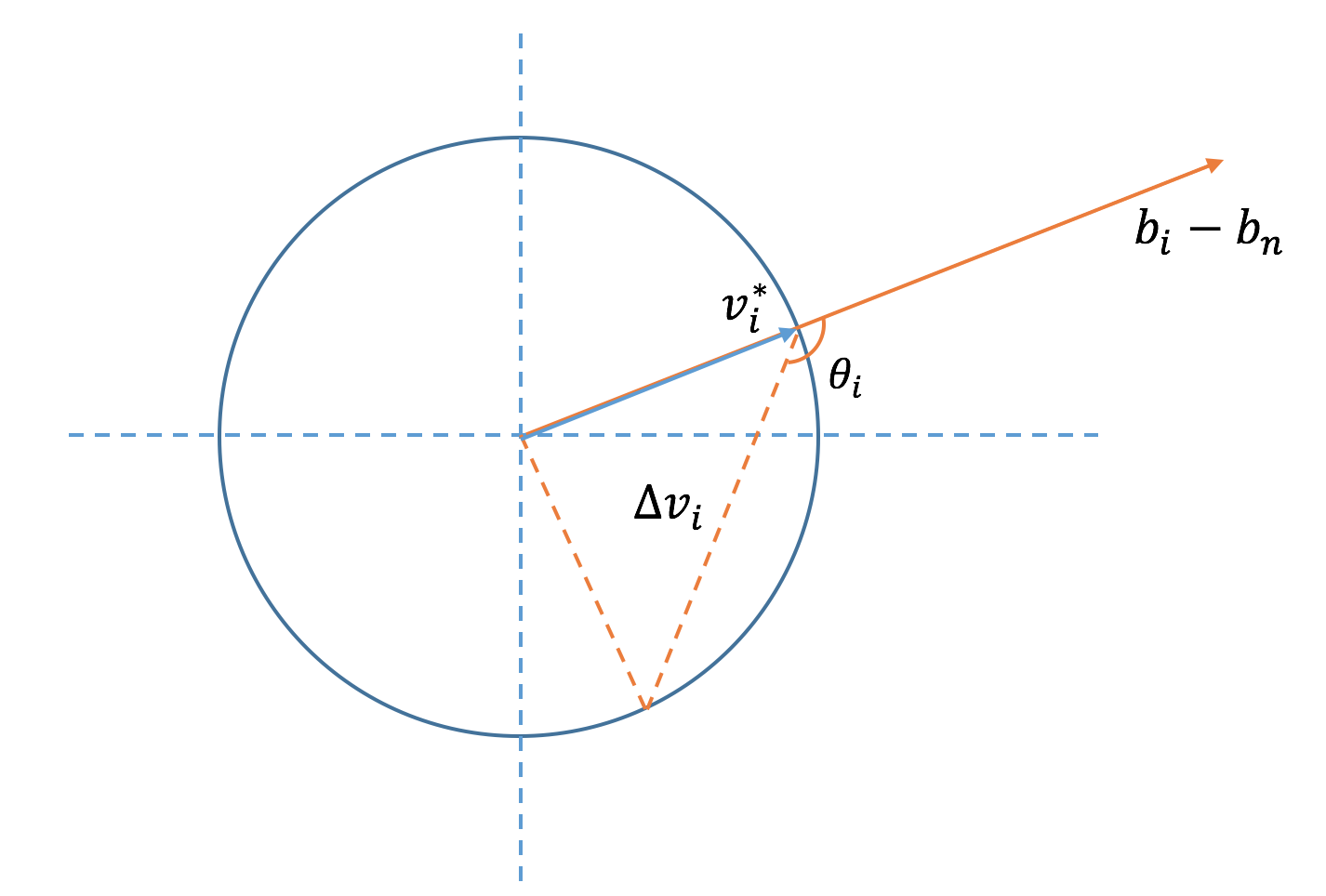} 
   \caption{Geometric interpretation of the local perturbation by $\Delta \nu_i$ around the solution $\nu_i^*$. For any perturbation $\Delta \nu_i$ of fixed length, the maximum of 
   $\dotp{\mf b_i - \mf b_n}{\Delta\nu_i}$ is achieved when $\|\nu^* +\Delta\nu_i\| = 1$, i.e. $\nu^* +\Delta\nu_i$ is on the boundary of the unit ball, in which case we have $\cos\theta_i =- \|\Delta \nu_i\|/2$ and $\dotp{\mf b_i - \mf b_n}{\Delta\nu_i} =  \|\mf b_i - \mf b_n\| \cdot\| \Delta\nu_i \|\cos\theta_i=- \|\mf b_i - \mf b_n\| \cdot\| \Delta\nu_i \|^2/2$.} 
   \label{fig:geometry}
\end{figure}

Next, we are going to show a local error bound condition for (\ref{another-prob-1'}-\ref{another-prob-3'}), and then pass the result back to (\ref{another-prob-1}-\ref{another-prob-3}). 
To this point, we consider any perturbation $\Delta \nu = [\Delta \nu_1^T,~\Delta \nu_2^T,~\cdots,~\Delta \nu_{n}^T]^T$ around the solution to 
(\ref{another-prob-1'}-\ref{another-prob-3'}) so that $\nu^* + \Delta\nu$ is within the feasible set $\l\{\nu\in\mb R^{nd}:~\|\nu_i\|^2\leq1,~i=1,2,\cdots,n-1,\l\| \sum_{i=1}^{n-1}\nu_i \r\|\leq1.\r\}$.
It follows $\sum_{i=1}^n(\nu^*_i + \Delta\nu_i) = 0$, which implies $\Delta\nu_n = -\sum_{i=1}^{n-1}\Delta\nu_i$.
Furthermore,
$\|\nu_i^*+\Delta\nu_i\|\leq 1,~\forall i=1,2,\cdots,n-1$ and 
$\l\| \sum_{i=1}^{n-1}(\nu_i^*+\Delta\nu_i) \r\|\leq1$. 

Denote $q(\nu) := - \sum_{i=1}^{n-1}\dotp{\nu_i}{\mf b_i-\mf b_n}$. Then, we have
\begin{equation}\label{eq:local-error-inter}
q(\nu^*+\Delta\nu) - q(\nu^*) = -\sum_{i=1}^{n-1}\dotp{\Delta \nu_i}{\mf b_i - \mf b_n}.
\end{equation}
Recall that $\|\nu_i^*+\Delta\nu_i\|\leq 1$ and $\nu_i^* = \frac{\mf b_i - \mf b_n}{\|\mf b_i - \mf b_n\|}$, it follows,
\[
\l\| \frac{\mf b_i - \mf b_n}{\|\mf b_i - \mf b_n\|} + \Delta\nu_i \r\|^2 \leq 1.
\]
Expanding the squares gives
\[
1+ 2\dotp{\frac{\mf b_i - \mf b_n}{\|\mf b_i - \mf b_n\|}}{\Delta\nu_i} + \| \Delta\nu_i \|^2\leq 1.
\]
Rearranging the terms gives
\[
\dotp{\mf b_i - \mf b_n}{\Delta\nu_i} \leq - \|\mf b_i - \mf b_n\| \cdot\| \Delta\nu_i \|^2/2.
\]
A geometric interpretation of this bound is given in Fig. \ref{fig:geometry}. 
Substituting this bound into \eqref{eq:local-error-inter} gives
\begin{align*}
q(\nu^*+\Delta\nu) - q(\nu^*)\geq& \sum_{i=1}^{n-1}  \|\mf b_i - \mf b_n\|\cdot\frac{ \|\Delta\nu_i\|^2}{2}\\
\geq&\frac12\l(\min_i \|\mf b_i - \mf b_n\|\r)\sum_{i=1}^{n-1}\|\Delta\nu_i\|^2.
\end{align*}
Note that since $\l\{ \mf b_1,~\mf b_2,~\cdots,~\mf b_n \r\}$ are distinct, $\min_i \|\mf b_i - \mf b_n\|>0$ and this
gives a local error bound condition for  (\ref{another-prob-1'}-\ref{another-prob-3'}) with parameter $\beta=\frac12$. Finally, since $\Delta\nu_n = -\sum_{i=1}^{n-1}\Delta\nu_i$, it follows,
\begin{multline*} 
q(\nu^*+\Delta\nu) - q(\nu^*)\geq\frac12\l(\min_i \|\mf b_i - \mf b_n\|\r)\sum_{i=1}^{n-1}\|\Delta\nu_i\|^2\\
\geq\frac{1}{2(n-1)}\l(\min_i \|\mf b_i - \mf b_n\|\r)\l\|\sum_{i=1}^{n-1}\Delta\nu_i\r\|^2
= \frac{1}{2(n-1)}\l(\min_i \|\mf b_i - \mf b_n\|\r)\l\|\Delta\nu_n\r\|^2,
\end{multline*}
where the second inequality follows from Cauchy-Schwarz inequality that
\[
\sqrt{\sum_{i=1}^{n-1}\|\Delta\nu_i\|^2}\sqrt{n-1}\geq \sum_{i=1}^{n-1}\|\Delta \nu_i\|\geq\l\|\sum_{i=1}^{n-1}\Delta\nu_i\r\|.
\]
Since $G(\nu+\Delta\nu) - G(\nu^*) = q(\nu^*+\Delta\nu) - q(\nu^*)$, it follows 
\[
G(\nu+\Delta\nu) - G(\nu^*)\geq  \frac{1}{4(n-1)}\l(\min_i \|\mf b_i - \mf b_n\|\r)\sum_{i=1}^{n}\|\Delta\nu_i\|^2
= \frac{1}{4(n-1)}\l(\min_i \|\mf b_i - \mf b_n\|\r)\|\Delta\nu\|^2.
\]
Finishing the proof for case 1.

\subsubsection{Proof of Lemma \ref{lem:local-error-G}: Case 2}

\textbf{Case 2:} The solution of the original geometric median (\ref{dec-prob-1}-\ref{dec-prob-2}) is NOT achieved at any of the vectors $\{\mf b_1,~\mf b_2,~\cdots,~\mf b_n\}$.

We start by rewriting problem (\ref{another-prob-1}-\ref{another-prob-3}) as an equivalent feasibility problem:
\begin{equation}\label{eq:feasibility}
\begin{cases}
 - \dotp{\nu}{\mf b} - G(\nu^*) \leq 0,\\
\|\nu_i\|^2\leq 1,~~i=1,2,\cdots,n,\\
\sum_{i=1}^n\nu_i = 0.
\end{cases}
\end{equation}

The uniqueness in this case comes from the following lemma.
\begin{lemma}\label{lem:feasible-unique}
The solution $\nu^*\in\mb R^{nd}$ to \eqref{eq:feasibility} is unique and satisfies $\|\nu_i^*\| = 1,~\forall i=1,2,\cdots,n$.
\end{lemma}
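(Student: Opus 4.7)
The plan is to derive both uniqueness and $\|\nu_i^*\|=1$ from the KKT conditions of the primal geometric median problem (\ref{dec-prob-1}-\ref{dec-prob-2}), exploiting the Case 2 hypothesis that the primal solution is distinct from every $\mathbf{b}_i$. First I would observe that by the range-space characterization \eqref{eq:range-space}, solving (\ref{another-prob-1}-\ref{another-prob-3}) is equivalent to minimizing $F(\lambda)$ over $\lambda\in\mathbb{R}^{nd}$ via the correspondence $\nu=\mathbf{A}^T\lambda$. Hence each solution of \eqref{eq:feasibility} has the form $\nu^\star=\mathbf{A}^T\lambda^\star$ for some $\lambda^\star\in\Lambda^*$. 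Uniqueness of $\nu^\star$ therefore amounts to showing that $\mathbf{A}^T\lambda^\star$ is the same for every $\lambda^\star\in\Lambda^*$.

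Next, I would invoke strong duality (valid since the feasible set of (\ref{dec-prob-1}-\ref{dec-prob-2}) is nonempty and $\mathcal{X}$ is compact) and write the KKT stationarity conditions for the primal problem at an optimal pair $(\mathbf{x}^*,\lambda^\star)$. By \eqref{eq:null-space} any feasible $\mathbf{x}^*$ has $\mathbf{x}_1^*=\cdots=\mathbf{x}_n^*=:\mathbf{x}^*$, and the non-collinearity assumption on $\{\mathbf{b}_i\}$ makes this common $\mathbf{x}^*$ the \emph{unique} geometric median. Two facts about $\mathbf{x}^*$ will be crucial: in Case 2, $\mathbf{x}^*\neq\mathbf{b}_i$ for every $i$, so the objective $\sum_i\|\mathbf{x}_i-\mathbf{b}_i\|$ is differentiable at $\mathbf{x}^*$ with subdifferential the singleton $\{(\mathbf{x}^*-\mathbf{b}_i)/\|\mathbf{x}^*-\mathbf{b}_i\|\}_i$; and since $\mathbf{x}^*$ lies in the convex hull of $\{\mathbf{b}_i\}$, one has $\|\mathbf{x}^*-\mathbf{b}_i\|\le\max_{j,k}\|\mathbf{b}_j-\mathbf{b}_k\|\le D/(2n)<D$, so the box constraint is inactive and its multiplier vanishes.

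With these inactive constraints out of the way, KKT stationarity becomes, for every $i\in\{1,\dots,n\}$,
\[
0=\frac{\mathbf{x}^*-\mathbf{b}_i}{\|\mathbf{x}^*-\mathbf{b}_i\|}+(\mathbf{A}^T\lambda^\star)_i,
\]
which forces
\[
\nu_i^\star=(\mathbf{A}^T\lambda^\star)_i=\frac{\mathbf{b}_i-\mathbf{x}^*}{\|\mathbf{b}_i-\mathbf{x}^*\|}.
\]
Since $\mathbf{x}^*$ is the unique primal minimizer, the right-hand side is uniquely determined, giving both uniqueness of $\nu^\star$ (over all optimal $\lambda^\star$) and $\|\nu_i^\star\|=1$. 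A sanity check that $\nu^\star$ lies in the feasible set of (\ref{another-prob-1}-\ref{another-prob-3}) is immediate: $\|\nu_i^\star\|=1\le 1$, and $\sum_i\nu_i^\star=0$ either by the range-space property \eqref{eq:range-space} or directly from the unconstrained optimality condition $\sum_i(\mathbf{b}_i-\mathbf{x}^*)/\|\mathbf{b}_i-\mathbf{x}^*\|=0$ for the geometric median.

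The main obstacle I anticipate is the careful bookkeeping of the correspondence between optimal $\lambda$'s (which are only unique modulo the null space of $\mathbf{A}^T$) and optimal $\nu$'s (which live in $\mathcal{R}(\mathbf{A}^T)$), and verifying that the ball constraint $\|\mathbf{x}_i-\mathbf{b}_i\|\le D$ is indeed inactive so that the clean stationarity identity above holds. The rest — uniqueness of the primal geometric median under non-collinearity and the singleton nature of the subdifferential at a point where the function is smooth — is standard. Once these preliminaries are in place, the identity for $\nu_i^\star$ delivers the lemma in one line.
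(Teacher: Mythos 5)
Your proof is correct, but it takes a genuinely different route from the paper. The paper never touches the primal KKT system: it argues via the theory of singular inequality systems (Lemma \ref{lem:Pang}) that if some ball constraint $\|\nu_i\|^2\leq 1$ were slack at a solution of \eqref{eq:feasibility}, then deleting it would leave the optimal value of (\ref{another-prob-1}-\ref{another-prob-3}) unchanged, which after eliminating the equality constraint forces the value $-\sum_{i=1}^{n-1}\|\mathbf{b}_i-\mathbf{b}_n\|$ and hence, by strong duality, that the geometric median sits at $\mathbf{b}_n$ --- contradicting the Case 2 hypothesis; uniqueness then follows from convexity of the solution set plus strict convexity of the unit ball. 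You instead identify the dual solution explicitly: since in Case 2 the unique geometric median $\mathbf{x}^*$ differs from every $\mathbf{b}_i$ and lies in the convex hull of the $\mathbf{b}_i$'s (so the constraint $\|\mathbf{x}_i-\mathbf{b}_i\|\leq D$ is strictly inactive under $D\geq 2n\max_{i,j}\|\mathbf{b}_i-\mathbf{b}_j\|$), Lagrangian stationarity at the saddle point pins down $(\mathbf{A}^T\lambda^*)_i=(\mathbf{b}_i-\mathbf{x}^*)/\|\mathbf{b}_i-\mathbf{x}^*\|$ for every $\lambda^*\in\Lambda^*$, giving both unit norm and uniqueness in one stroke (and matching the Case 1 formula $\nu_i^*=(\mathbf{b}_i-\mathbf{b}_n)/\|\mathbf{b}_i-\mathbf{b}_n\|$ with $\mathbf{x}^*=\mathbf{b}_n$, which is a good consistency check). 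The one step you should make explicit is the bookkeeping you yourself flag: a solution of \eqref{eq:feasibility} is a point of $\mathcal{B}$ with $\sum_i\nu_i=0$ and $-\dotp{\nu}{\mathbf{b}}\leq G(\nu^*)$, and since $G(\nu)=-\dotp{\nu}{\mathbf{b}}$ on $\mathcal{B}$ and, by Lemma \ref{lem:solution-set-1}, the optimal value of the restricted problem equals that of (\ref{change-prob-1}-\ref{change-prob-2}), such a point is a global minimizer of $G$ over $\{\sum_i\nu_i=0\}=\mathcal{R}(\mathbf{A}^T)$ (cf. \eqref{eq:range-space}) and hence of the form $\mathbf{A}^T\lambda^*$ with $\lambda^*\in\Lambda^*$; with that line added, your argument is complete. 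What your approach buys is an elementary proof avoiding the singular-systems machinery together with a closed-form expression for $\nu^*$; what the paper's approach buys is that it stays entirely on the dual side and sets up the same alternative-theorem toolkit that its subsequent quadratic-growth (linearization/Motzkin) argument reuses.
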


To understand the feasibility problem \eqref{eq:feasibility} and prove Lemma \ref{lem:feasible-unique}, we start with the following definition:
\begin{definition}[\cite{wang1994global}]
Consider any inequality system $f_i(\mf x)\leq 0,~i=1,2,\cdots,m$. An inequality $f_i(\mf x)\leq 0$ in the system is said to be singular if $f_i(\mf x) = 0$ for any solution to the system. If every inequality in the system is singular, we say the inequality system is singular.
\end{definition}

The following basic lemma regarding general feasibility problems is also proved in (\cite{wang1994global}).
\begin{lemma}[Lemma 2.1 of \cite{wang1994global}]\label{lem:Pang}
Consider any inequality system $f_i(\mf x)\leq 0,~i=1,2,\cdots,m$ with non-empty solution set $S$. Suppose each of $f_i$ is convex. Denote 
\begin{align*}
K &:= \l\{ k\in\{1,2,\cdots,m\}: f_k(\mf x) \leq 0~\text{is nonsingular} \r\},\\
J &:= \l\{ j\in\{1,2,\cdots,m\}: f_j(\mf x) \leq 0~\text{is singular} \r\}.
\end{align*}
Then, the sub-system $f_j(\mf x)\leq 0, j\in J$ alone is singular.
\end{lemma}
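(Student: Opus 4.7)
I will prove Lemma \ref{lem:Pang} by contradiction: assuming some $j_0 \in J$ admits a point $\bar{\mf x}$ with $f_j(\bar{\mf x}) \leq 0$ for all $j \in J$ and $f_{j_0}(\bar{\mf x}) < 0$, I will construct a point in the \emph{full} solution set $S$ at which $f_{j_0}$ is strictly negative, contradicting the singularity of $f_{j_0}$ in the original system.

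The first step is to produce a ``uniformly strict'' feasible point $\mf x^* \in S$, meaning one at which every nonsingular inequality holds strictly, i.e.\ $f_k(\mf x^*) < 0$ for all $k \in K$. By definition of nonsingularity, for each $k \in K$ there is a witness $\mf x_k \in S$ with $f_k(\mf x_k) < 0$. I set $\mf x^* = \frac{1}{|K|}\sum_{k \in K} \mf x_k$. Convexity of $S$ (which follows from convexity of each $f_i$) gives $\mf x^* \in S$, so $f_j(\mf x^*) = 0$ for all $j \in J$ automatically. For each fixed $k \in K$, convexity of $f_k$ yields $f_k(\mf x^*) \leq \frac{1}{|K|}\sum_{k' \in K} f_k(\mf x_{k'})$; in this sum, the term $k' = k$ is strictly negative while the remaining terms are $\leq 0$ since $\mf x_{k'} \in S$. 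Hence $f_k(\mf x^*) < 0$ for every $k \in K$.

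The second step is the convex combination argument. Define $\mf x_t = (1-t)\mf x^* + t\bar{\mf x}$ for $t \in [0,1]$. For each $j \in J$, convexity and $f_j(\mf x^*) = 0$ give $f_j(\mf x_t) \leq t f_j(\bar{\mf x}) \leq 0$, with strict negativity at $j = j_0$ whenever $t > 0$. For each $k \in K$, convexity gives $f_k(\mf x_t) \leq (1-t)f_k(\mf x^*) + t f_k(\bar{\mf x})$; since $f_k(\mf x^*) < 0$, there is some $t_k > 0$ such that this bound is negative for all $t \in (0, t_k]$. Taking $t \in (0, \min_{k \in K} t_k]$ (a finite min), $\mf x_t$ satisfies every inequality in the full system, so $\mf x_t \in S$. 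But $f_{j_0}(\mf x_t) < 0$ contradicts the assumption that $f_{j_0} \leq 0$ is singular in the original system. Hence no such $\bar{\mf x}$ exists, i.e.\ the subsystem indexed by $J$ is singular.

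The argument is almost entirely routine once the uniformly strict point is in hand; the only place requiring care is the averaging construction of $\mf x^*$, since nonsingularity a priori only supplies a separate witness per $k \in K$, not a single common one. The convexity of the $f_k$ is exactly what makes this averaging step work, and it is the only nontrivial ingredient beyond definitions.
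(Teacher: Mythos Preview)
Your argument is correct and is essentially the standard proof of this fact. The paper does not give its own proof of Lemma~\ref{lem:Pang}; it simply quotes the result from \cite{wang1994global}. The approach in that reference is the same as yours: build a single point in $S$ at which all nonsingular inequalities are strict (your averaging step), then push slightly toward the hypothetical $\bar{\mf x}$ to obtain a point of $S$ violating the singularity of $f_{j_0}$.

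Two minor remarks. First, you implicitly assume $K\neq\emptyset$ when forming $\mf x^*=\frac{1}{|K|}\sum_{k\in K}\mf x_k$; if $K=\emptyset$ the subsystem coincides with the full system and the conclusion is immediate, so just say so. Second, the bound $f_k(\mf x_t)\le (1-t)f_k(\mf x^*)+t f_k(\bar{\mf x})$ tacitly uses that $f_k(\bar{\mf x})$ is finite; this is fine under the usual (and intended) reading that each $f_i$ is a real-valued convex function on the ambient space, but it is worth stating once. With those two clarifications the proof is complete.
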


\begin{proof}[Proof of Lemma \ref{lem:feasible-unique}]
Suppose $\nu^*$ is one of the solutions to \eqref{eq:feasibility}. 
Suppose without loss of generality, the ball constraint $\|\nu_n\|^2\leq1$ in \eqref{eq:feasibility} is nonsingular. Then, by Lemma \ref{lem:Pang},  the subsystem 
\begin{equation}\label{eq:another-feasibility}
\begin{cases}
 - \dotp{\nu}{\mf b} - G(\nu^*) \leq 0,\\
\|\nu_i\|^2\leq 1,~~i=1,2,\cdots,n-1,\\
\sum_{i=1}^n\nu_i = 0.
\end{cases}
\end{equation}
is still singular. This implies the optimal objective value of the following problem 
\begin{align*}
\min_{\nu\in\mb R^{nd}}&~~ - \dotp{\nu}{\mf b},\\
s.t.&~~\sum_{i=1}^n\nu_i = 0,\\
&~~\|\nu_i\|^2\leq1,~i=1,2,\cdots,n-1,  
\end{align*}
is still $G(\nu^*)$. Similar as before, one can get rid of the equality using $\nu_n = -\sum_{i=1}^{n-1}\nu_i$ and form an equivalent problem:
\begin{align*}
\min_{\nu\in\mb R^{nd}}&~~ - \sum_{i=1}^{n-1}\dotp{\nu_i}{\mf b_i-\mf b_n},\\
s.t.
&~~\|\nu_i\|^2\leq1,~i=1,2,\cdots,n-1.  
\end{align*}
This is a separable problem and obviously the optimal objective of this problem is $-\sum_{i=1}^{n-1}\|\mf b_i - \mf b_n\|$, which implies 
$G(\nu^*) = -\sum_{i=1}^{n-1}\|\mf b_i - \mf b_n\|$. However, by strong duality and the uniqueness of the geometric median problem (\ref{dec-prob-1}-\ref{dec-prob-2}), this further implies the solution to (\ref{dec-prob-1}-\ref{dec-prob-2}) is attained uniquely at $\mf x_1 = \mf x_2=\cdots=\mf x_n = \mf b_n$, contradicting the assumption that the solution to (\ref{dec-prob-1}-\ref{dec-prob-2}) is NOT achieved at any of the vectors $\{\mf b_1,~\mf b_2,~\cdots,~\mf b_n\}$. Thus, we have shown that it is not possible to have one of the ball constraint being loose. This trivially implies it is not possible to have any two or more ball constraints being loose and hence we know that any solution $\nu^*$ to \eqref{eq:feasibility} must satisfy $\|\nu_i^*\| = 1,~\forall i=1,2,\cdots,n$.

Now suppose on the contrary such a solution is not unique. Let $\nu^*,~\wt{\nu}^*\in\mb R^{nd}$ be two distinct solutions. Then, they must be different at some index $j$, i.e. $\exists j$ such that $\nu_j^*\neq\wt{\nu}_j^*$ and they satisfy $\|\nu_j^*\|=\|\wt{\nu}_j^*\| = 1$ by the previous argument. However, since the solution set to \eqref{eq:feasibility} must be convex (which follows trivially from the fact that all constraints are convex), any convex combination of $\nu^*,~\wt{\nu}^*$ must be the solution. Specifically, the solution $\frac{\nu^*+\wt{\nu}^*}{2}$ has its $j$-th index
$\l\|\frac{\nu_j^*+\wt{\nu}_j^*}{2}\r\| <1$, contradicting the fact that any solution $\nu^*$ must satisfy $\|\nu_i^*\| = 1,~\forall i=1,2,\cdots,n$.
\end{proof}

Now, we proceed to prove Lemma \ref{lem:local-error-G} for this case. The proof is inspired by a crucial ``linearization'' technique transforming general quadratic systems to linear systems which we are able to understand (e.g. \cite{wang1994global}, \cite{luo1994extension}). Consider any feasible $\nu\in\mb R^{nd}$ regarding \eqref{another-prob-1}-\eqref{another-prob-3}. Then, for any index $i$, we have
\begin{multline}\label{eq:bound-i}
\|\nu_i - \nu_i^*\|^2  = \|\nu_i\|^2 - 2\dotp{\nu_i}{\nu_i^*} + \|\nu_i^*\|^2 = \|\nu_i\|^2 - 2\dotp{\nu_i- \nu_i^*}{\nu_i^* } - \|\nu_i^*\|^2\\
=\|\nu_i\|^2 - 1 + 2\dotp{\nu_i^*- \nu_i}{\nu_i^* }\leq2\dotp{\nu_i^*- \nu_i}{\nu_i^* },
\end{multline}
where in the third equality we use Lemma \ref{lem:feasible-unique} that $\|\nu_i^*\|=1$. We aim to bound the second term $\dotp{\nu_i^*- \nu_i}{\nu_i^* }$.

By Lemma \ref{lem:feasible-unique}, we have the following system has NO solution:
\begin{equation}\label{eq:nonlinear-inter}
\begin{cases}
 - \dotp{\nu}{\mf b} - G(\nu^*) \leq 0,\\
\|\nu_i\|^2- 1 < 0,~~i=1,2,\cdots,n,\\
\sum_{i=1}^n\nu_i = 0.
\end{cases}
\end{equation}
This is equivalent to claiming the following \textit{linear} system has no solution:
\begin{equation}\label{eq:linear-inter}
\begin{cases}
 - \dotp{\mf b}{\mf y} \leq 0,\\
\dotp{\nu^*_i}{\mf y_i} < 0,~~i=1,2,\cdots,n,\\
\sum_{i=1}^n\mf y_i = 0.
\end{cases}
\end{equation}
To see why this is true, suppose on the contrary, \eqref{eq:linear-inter} indeed has a solution. Let $\mf y^*$ be its solution, then we have $\alpha \mf y^*$ is also a solution for any $\alpha>0$. This in turn implies 
$$- \dotp{\mf b}{\nu^* + \alpha \mf y^*} - G(\nu^*) \leq - \dotp{\mf b}{\nu^*} - G(\nu^*)\leq0,$$
and
$$ \sum_{i=1}^n(\nu_i + \alpha\mf y_i^*) = \alpha \sum_{i=1}^n\mf y_i = 0. $$
Furthermore, for sufficiently small $\alpha$, e.g. we can choose any $\alpha \leq \min_i \frac{\dotp{\nu_i^*}{\mf y_i^*}}{\|\mf y_i^*\|^2}$, the following holds,
\[
\dotp{\nu_i^*}{\alpha \mf y_i^*} + \alpha^2\|\mf y_i^*\|^2\leq 0.
\]
This implies
\[
\|\nu_i^* + \alpha \mf y_i^*\| = \|\nu_i^*\|^2 + 2\dotp{\nu_i^*}{\alpha y_i^*} + \|\mf y_i^*\|-1\leq \dotp{\nu_i}{\alpha\mf y_i^*}<0,
\]
and thus $\nu_i^* + \alpha\mf y_i^*$ is a solution to \eqref{eq:nonlinear-inter}. 
On the other hand, suppose \eqref{eq:nonlinear-inter} has a solution, then, one can show similarly \eqref{eq:linear-inter} has a solution.

To analyze \eqref{eq:linear-inter}, we employ the classical Motzkin's alternative theorem:
\begin{lemma}[\cite{linear1952}, Theorem D6]
Suppose $\mf A \neq 0$. Either 
\[
\mf A\mf x > 0,~~\mf B\mf x \geq 0, ~~ \mf C\mf x = 0, 
\]
has a solution, or there exists $\mf u, \mf v, \mf w$ such that
\[
\mf A^T\mf u + \mf B^T\mf v + \mf C^T\mf w = 0, ~~\mf u \geq 0, ~~\mf v\geq0, \mf u \neq 0,
\]
but not both, where the inequalities are taken to be entrywise.
\end{lemma}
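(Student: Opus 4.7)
The plan is a classical two-direction argument: the easy ``both cannot hold'' direction follows from a single inner-product calculation, and the hard ``at least one must hold'' direction reduces to Farkas' Lemma after converting the strict inequality $\mf A \mf x > 0$ into an inhomogeneous nonstrict one.

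For the impossibility part, suppose simultaneously that $\mf x$ solves the first system and $(\mf u, \mf v, \mf w)$ solves the second. Take the inner product of $\mf A^T\mf u + \mf B^T\mf v + \mf C^T\mf w = 0$ with $\mf x$ to obtain
\[
(\mf A\mf x)^T\mf u + (\mf B\mf x)^T\mf v + (\mf C\mf x)^T\mf w = 0.
\]
Since $\mf u\geq 0$, $\mf u\neq 0$, and every component of $\mf A\mf x$ is strictly positive, the first term is strictly positive; the second is nonnegative because $\mf v\geq 0$ and $\mf B\mf x \geq 0$; the third vanishes because $\mf C\mf x=0$. This contradicts the identity, ruling out simultaneous solvability.

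For the existence part, assume the first system has no solution. By positive homogeneity, that system is feasible if and only if the inhomogeneous system $\mf A\mf x \geq \mf 1,\ \mf B\mf x \geq 0,\ \mf C\mf x = 0$ is feasible (where $\mf 1$ is the all-ones vector of length equal to the row count of $\mf A$): scaling any strict solution by $1/\min_i(\mf A\mf x)_i$ yields the reformulated one, while $\mf 1>0$ trivially gives the converse. Hence the system
\[
\begin{bmatrix}-\mf A\\ -\mf B\end{bmatrix}\mf x \le \begin{bmatrix}-\mf 1\\ \mf 0\end{bmatrix},\qquad \mf C\mf x=\mf 0
\]
is infeasible. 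Now invoke the mixed form of Farkas' Lemma: such a system is infeasible iff there exist multipliers $\mf u\geq 0$, $\mf v\geq 0$, $\mf w$ satisfying $-\mf A^T\mf u - \mf B^T\mf v + \mf C^T\mf w = \mf 0$ and $-\mf 1^T\mf u < 0$. The strict inequality forces $\mf u\neq \mf 0$, and relabeling $\mf w\leftarrow -\mf w$ yields the desired multipliers with $\mf A^T\mf u + \mf B^T\mf v + \mf C^T\mf w = \mf 0$, $\mf u\geq 0$, $\mf v\geq 0$, $\mf u\neq \mf 0$.

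The only real obstacle is invoking the correct form of Farkas' Lemma mixing inequalities and equalities; I will treat it as a black-box, since it is a standard consequence of the finite-dimensional separating hyperplane theorem applied to the closed convex cone $\{\mf M^T\mf u + \mf N^T\mf w:\mf u\geq 0\}$, or equivalently derivable from the homogeneous Farkas form by splitting each equality constraint into a pair of opposite inequalities. The hypothesis $\mf A\neq 0$ merely excludes the degenerate case in which $\mf A$ has no rows, where the condition $\mf u\neq 0$ cannot even be stated.
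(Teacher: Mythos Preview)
The paper does not prove this lemma at all: it is stated with a citation to \cite{linear1952} (Motzkin's transposition theorem) and invoked as a black box in the analysis of the feasibility system \eqref{eq:linear-inter}. So there is no ``paper's own proof'' to compare against.

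Your argument is correct and is the standard derivation of Motzkin's alternative from Farkas' lemma. The inner-product contradiction for the ``not both'' direction is exactly right, and the homogenization trick (replacing $\mf A\mf x>0$ by $\mf A\mf x\geq\mf 1$ via scaling) followed by the mixed-form Farkas alternative is the textbook route. One small quibble: your closing remark that ``$\mf A\neq 0$ merely excludes the degenerate case in which $\mf A$ has no rows'' conflates the zero matrix with the empty matrix; the hypothesis is really there to guarantee that the strict block is nontrivial so that the condition $\mf u\neq 0$ is meaningful and not vacuously achievable or vacuously impossible. This does not affect the validity of your proof.
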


Now, applying Motzkin's alternative to \eqref{eq:linear-inter}, we have there exists a $\mf u\in\mb R^{2n+1}$ such that
\begin{equation}\label{eq:alternative}
-u_0\mf b^T  + \sum_{i=1}^n u _i \l[\nu_i^*\r] + \sum_{i=1}^nu_{n+i}[\mf e_i] = 0,~~[u_1,~u_2,~\cdots,~u_n]\neq0,~\mf u \geq0,
\end{equation}
where we define the block notation ``$[\cdot]$'' as follows 
$$
\l[\nu_i^*\r] = [\mf 0,\cdots,~\mf 0,~(\nu_i^*)^T,~\mf 0,\cdots,~\mf 0]\in\mb R^{nd},
$$
which takes $\nu_i^*$ at the $i$-th block of dimension $d$ and $\mf 0$ on other blocks. Also,
$$
[\mf e_i] = [\mf e_i^T,~\mf e_i^T,~\cdots,~\mf e_i^T]\in\mb R^{nd},
$$
which takes unit basis vector $\mf e_i\in\mb R^d$ on all blocks. 

\textbf{Claim 1:} $u_i>0,~\forall i=1,2,\cdots,n$.

To see why this is true, suppose on the contrary one of the $u_i$'s is $0$. Without loss of generality, we can assume $u_n = 0$. Then, by 
Motzkin's alternative again on \eqref{eq:alternative}, the following system has no solution:
\begin{equation}\label{eq:linear-inter-2}
\begin{cases}
 - \dotp{\mf b}{\mf y} \leq 0,\\
\dotp{\nu^*_i}{\mf y_i} < 0,~~i=1,2,\cdots,n-1,\\
\sum_{i=1}^n\mf y_i = 0.
\end{cases}
\end{equation}
By a similar equivalence relation as that of \eqref{eq:nonlinear-inter} and \eqref{eq:linear-inter}, this implies the following system has no solution,
\begin{equation*}
\begin{cases}
 - \dotp{\nu}{\mf b} - G(\nu^*) \leq 0,\\
\|\nu_i\|^2- 1 < 0,~~i=1,2,\cdots,n-1,\\
\sum_{i=1}^n\nu_i = 0,
\end{cases}
\end{equation*}
which, by substituting $\nu_n = -\sum_{i=1}^{n-1}\nu_{i}$, implies the following system has no solution:
\begin{equation}\label{eq:linear-inter-2}
\begin{cases}
 - \sum_{i=1}^{n-1}\dotp{\nu_i}{\mf b_i-\mf b_n} - G(\nu^*) \leq 0,\\
\|\nu_i\|^2- 1 < 0,~~i=1,2,\cdots,n-1.
\end{cases}
\end{equation}
However, we know that the solution to the following minimization problem:
\begin{align*}
\min_{\nu\in\mb R^{nd}}&~~ - \sum_{i=1}^{n-1}\dotp{\nu_i}{\mf b_i-\mf b_n},~~
s.t.
~~\|\nu_i\|^2\leq1,~i=1,2,\cdots,n-1,
\end{align*}
is attained uniquely at $\nu_i = \frac{\mf b_i-\mf b_n}{\|\mf b_i-\mf b_n\|}$ and the optimal objective value is $-\sum_{i=1}^{n-1}\|\mf b_i-\mf b_n\|$ which must be \textit{strictly less than} 
$G(\nu^*)$ by strong duality and the fact that the solution to (\ref{dec-prob-1}-\ref{dec-prob-2}) is not attained at $\mf x_1=\mf x_2=\cdots=\mf x_n = \mf b_n$. As a consequence, 
if we set 
$$\wt{\nu}_i = \frac{\mf b_i-\mf b_n}{\|\mf b_i-\mf b_n\|}\frac{-G(\nu^*)}{\sum_{i=1}^{n-1}\|\mf b_i-\mf b_n\|},~i=1,2,\cdots,n-1,$$
then, $\|\wt{\nu}_i\| < 1,~\forall i=1,2,\cdots,n-1$ and $ - \sum_{i=1}^{n-1}\dotp{\wt{\nu}_i}{\mf b_i-\mf b_n} - G(\nu^*) = 0$, which implies \eqref{eq:linear-inter-2} has a solution and we reach a contradiction.

Now, rewriting \eqref{eq:alternative}, we have
\[
[u_1(\nu_1^*)^T,~u_2(\nu_2^*)^T,~\cdots,~u_n(\nu_n^*)^T] = u_0 \mf b -  \sum_{i=1}^nu_{n+i}[\mf e_i],
\]
multiplying both sides by $[\nu_1^* - \nu_1,~\nu_2^* - \nu_2,~\cdots,~\nu_n^* - \nu_n]$,
which implies 
\begin{multline*}
\sum_{j=1}^n u_j \dotp{\nu_j^* - \nu_j}{\nu_j^*} = u_0\sum_{j=1}^n\dotp{\mf b_j}{\nu_j^* - \nu_j} - \sum_{i=1}^n\sum_{j=1}^nu_{n+i}\dotp{\mf e_i}{\nu_j^* - \nu_j}\\
=  u_0\sum_{j=1}^n\dotp{\mf b_j}{\nu_j^* - \nu_j} = u_0 (G(\nu) - G(\nu^*)),
\end{multline*}
where the second from the last equality follows from $\sum_{i=1}^n\nu_i = \sum_{i=1}^n\nu_i^* = 0$. Thus, for any index $j\in\{1,2,\cdots,n\}$, 
\begin{align*}
\dotp{\nu_j^* - \nu_j}{\nu_j^*}  =& \sum_{i\neq j} \frac{u_i}{u_j} \dotp{\nu_i - \nu_i^*}{\nu_i^*} + \frac{u_0}{u_j} (G(\nu) - G(\nu^*)) ~~~ (\text{by the fact}~u_j>0)\\
\leq& \sum_{i\neq j} \frac{u_i}{u_j} (\|\nu_i\|^2 - \|\nu_i^*\|^2) + \frac{u_0}{u_j} (G(\nu) - G(\nu^*))~~~(\text{by convexity and}~u_i>0)\\
\leq& \frac{u_0}{u_j} (G(\nu) - G(\nu^*))~~~(\text{by feasibility that} \|\nu_i\|^2\leq 1 = \|\nu_i^*\|^2).
\end{align*}
Substituting this bound into \eqref{eq:bound-i} gives
\[
\|\nu_j^* - \nu_j\|^2\leq \frac{2u_0}{u_j} (G(\nu) - G(\nu^*)),~\forall j\in\{1,2,\cdots,n\},
\]
and thus,
\[
\|\nu^* - \nu\|^2 = \sum_{j=1}^n\|\nu_j^* - \nu_j\|^2\leq \sum_j\frac{2u_0}{u_j} (G(\nu) - G(\nu^*)),
\]
finishing the proof.

\subsubsection{Putting everything together}
Combining Lemma \ref{lem:solution-set-1} and Lemma \ref{lem:local-error-G} we can easily show the following:
\begin{lemma}\label{lem:all-G}
The solution $\nu^*$ to (\ref{change-prob-1}-\ref{change-prob-2}) is unique and furthermore, for any $\delta>0$ and any point $\nu = [\nu_1^T,~\nu_2^T,~\cdots,~\nu_n^T]^T\in \mb R^{nd}$  such that 
$\sum_{i=1}^n\nu_i = 0$ and $G(\nu) - G(\nu^*)\leq \delta$, we have there exists a constant $C_\delta$ depending on $\delta$ such that
\[
G(\nu) - G(\nu^*)\geq C_\delta\|\nu - \nu^*\|^2.
\]
\end{lemma}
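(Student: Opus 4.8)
The plan is to show that the two constrained problems (\ref{change-prob-1}-\ref{change-prob-2}) and (\ref{another-prob-1}-\ref{another-prob-3}) have exactly the same solution set, and then to transfer the local error bound of Lemma \ref{lem:local-error-G}, which holds only over the box region $\mc B = \{\nu:~\|\nu_i\|\leq 1,~\forall i\}$, to the full equality constraint set $\{\nu:~\sum_{i=1}^n\nu_i = 0\}$ by using the ``descent toward $\mc B$'' estimate supplied by Lemma \ref{lem:solution-set-1}.

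First I would settle uniqueness. By Lemma \ref{lem:solution-set-1} every minimizer of (\ref{change-prob-1}-\ref{change-prob-2}) lies in $\mc B$, and on $\mc B$ one has $G(\nu) = -\dotp{\nu}{\mf b}$, so such a point is feasible for (\ref{another-prob-1}-\ref{another-prob-3}) with the same objective value; conversely, any minimizer of (\ref{another-prob-1}-\ref{another-prob-3}) is feasible for (\ref{change-prob-1}-\ref{change-prob-2}) with equal objective. Hence the two problems have the same optimal value and the same solution set, which is the single point $\nu^*$ by the uniqueness part of Lemma \ref{lem:local-error-G}; in particular $\|\nu_i^*\|\leq 1$ for every $i$.

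Next I would fix $\delta>0$ and a point $\nu$ with $\sum_{i=1}^n\nu_i = 0$ and $G(\nu) - G(\nu^*)\leq\delta$, and argue by cases. If $\nu\in\mc B$, then $\nu$ satisfies (\ref{another-prob-2}-\ref{another-prob-3}) and Lemma \ref{lem:local-error-G} gives directly $\|\nu-\nu^*\|\leq C_0(G(\nu)-G(\nu^*))^{1/2}$. If $\nu\notin\mc B$, Lemma \ref{lem:solution-set-1} produces $\overline\nu\in\mc B$ with $\sum_{i=1}^n\overline\nu_i = 0$ and $G(\nu) - G(\overline\nu)\geq m\|\nu-\overline\nu\|$, where $m:=\max_{i,j}\|\mf b_i - \mf b_j\|>0$ (strictly positive since the $\mf b_i$ are distinct). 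Because $\overline\nu$ is feasible for (\ref{change-prob-1}-\ref{change-prob-2}) we have $G(\overline\nu)\geq G(\nu^*)$, hence $0\leq G(\overline\nu)-G(\nu^*)\leq G(\nu)-G(\nu^*)\leq\delta$, so $\overline\nu$ still satisfies (\ref{another-prob-2}-\ref{another-prob-3}) and lies in the $\delta$-sublevel set; Lemma \ref{lem:local-error-G} applied to $\overline\nu$ gives $\|\overline\nu-\nu^*\|\leq C_0(G(\overline\nu)-G(\nu^*))^{1/2}\leq C_0(G(\nu)-G(\nu^*))^{1/2}$. Meanwhile $\|\nu-\overline\nu\|\leq \tfrac1m\bigl(G(\nu)-G(\overline\nu)\bigr)\leq\tfrac1m\bigl(G(\nu)-G(\nu^*)\bigr)$. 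Writing $t = (G(\nu)-G(\nu^*))^{1/2}\leq\sqrt\delta$ and using the triangle inequality,
\[
\|\nu-\nu^*\|\leq \|\nu-\overline\nu\| + \|\overline\nu-\nu^*\|\leq \frac{t^2}{m} + C_0 t \leq \Bigl(\frac{\sqrt\delta}{m} + C_0\Bigr) t .
\]
The same bound holds trivially in the first case since $C_0\leq \sqrt\delta/m + C_0$. Squaring and rearranging yields $G(\nu) - G(\nu^*)\geq C_\delta\|\nu-\nu^*\|^2$ with $C_\delta := \bigl(C_0 + \sqrt\delta/m\bigr)^{-2}$, which depends only on $\delta$ through $\sqrt\delta$.

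I do not expect a serious obstacle: Lemmas \ref{lem:solution-set-1} and \ref{lem:local-error-G} carry essentially all the weight. The only points requiring care are (i) verifying that the auxiliary point $\overline\nu$ remains inside the $\delta$-sublevel set so that Lemma \ref{lem:local-error-G} is applicable to it, and (ii) the bookkeeping that merges a bound \emph{linear} in $G(\nu)-G(\nu^*)$ (from Lemma \ref{lem:solution-set-1}) with a bound of \emph{square-root} type (from Lemma \ref{lem:local-error-G}) into a single quadratic-growth inequality; the step of absorbing the linear-in-$G$ term by using $t\leq\sqrt\delta$ is precisely what forces $C_\delta$ to depend on $\delta$, consistently with the statement.
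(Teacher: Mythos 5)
Your proof is correct and follows essentially the same route as the paper's: obtain $\overline\nu\in\mathcal B$ from Lemma~\ref{lem:solution-set-1}, apply Lemma~\ref{lem:local-error-G} to $\overline\nu$, then combine the linear bound on $\|\nu-\overline\nu\|$ and the square-root bound on $\|\overline\nu-\nu^*\|$ via triangle inequality, absorbing the mismatch in growth rates by using the sublevel bound $G(\nu)-G(\nu^*)\le\delta$. The only difference is bookkeeping (you bound $\|\nu-\nu^*\|$ directly in terms of $t=(G(\nu)-G(\nu^*))^{1/2}$ and then square; the paper squares the two pieces separately and uses $(x+y)^2\le 2x^2+2y^2$), and your observation that $\overline\nu$ must lie in the $\delta$-sublevel set is actually unnecessary since Lemma~\ref{lem:local-error-G} requires only feasibility for (\ref{another-prob-2}-\ref{another-prob-3}), not a sublevel condition.
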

\begin{proof}[Proof of Lemma \ref{lem:all-G}]
Since the solution to (\ref{change-prob-1}-\ref{change-prob-2}) is attained in the constraint set (\ref{another-prob-2}-\ref{another-prob-3}) by Lemma  \ref{lem:solution-set-1}, the uniqueness follows  directly from Lemma \ref{lem:local-error-G}.

Now, for any $\nu\in\mb R^{nd}$, such that $\sum_{i=1}^n\nu_i=0$, and $\|\nu_i\| > 1$ for some index $i$,
\[
G(\nu) - G(\nu^*) = G(\nu) - G(\overline{\nu}) + G(\overline{\nu}) - G(\nu^*)
\geq\l(\max_{i,j}\|\mf b_i - \mf b_j\|\r)\| \nu - \overline{\nu} \| + C_0^2\|\overline{\nu} - \nu^*\|^2.
\]
where the vector $\overline{\nu}$ is defined in Lemma \ref{lem:solution-set-1}, the second inequality follows from Lemma \ref{lem:solution-set-1} that 
$G(\nu) - G(\overline{\nu})\geq\l(\max_{i,j}\|\mf b_i - \mf b_j\|\r)\| \nu - \overline{\nu} \|$ and Lemma \ref{lem:local-error-G} that 
$ G(\overline{\nu}) - G(\nu^*)\geq C_0^2\|\overline{\nu} - \nu^*\|^2$.

Thus, for any $\nu$ such that $G(\nu) - G(\nu^*)\leq \delta$, we have
\[
\frac{\delta}{\max_{i,j}\|\mf b_i - \mf b_j\|}\geq \| \nu - \overline{\nu} \|, 
\]
which implies
\[
\| \nu - \overline{\nu} \|\geq 
\begin{cases}
\frac{\max_{i,j}\|\mf b_i - \mf b_j\|}{\delta}\| \nu - \overline{\nu} \|^2,~~&\text{if}~\frac{\delta}{\max_{i,j}\|\mf b_i - \mf b_j\|}>1,\\
\| \nu - \overline{\nu} \|^2,~~&\text{otherwise}.
\end{cases}
\]
Thus, 
\begin{align*}
G(\nu) - G(\nu^*)\geq& C_0^2\|\overline{\nu} - \nu^*\|^2 + \frac{\max_{i,j}\|\mf b_i - \mf b_j\|}{\max\l\{ \frac{\delta}{\max_{i,j}\|\mf b_i - \mf b_j\|},1 \r\}}\| \nu - \overline{\nu} \|^2\\
\geq& C_\delta ( \|\overline{\nu} - \nu^*\| + \| \nu - \overline{\nu} \|)^2 \geq C_\delta\|\nu - \nu^*\|^2,
\end{align*}
for some $C_\delta >0$, where the second inequality follows from $\|w+z\|^2 \leq 2\|w\|^2 + 2\|z\|^2,~\forall w,z$ and the last inequality follows from triangle inequality.

On the other hand,  for any $\nu\in\mb R^{nd}$, such that $\sum_{i=1}^n\nu_i=0$, and $\|\nu_i\| \leq 1$ for all indices $i$, by Lemma \ref{lem:local-error-G}
\[
G(\nu) - G(\nu^*)\geq C_0^2\|\nu - \nu^*\|^2.
\]
Overall, we finish the proof.
\end{proof}

\subsubsection{Finishing the proof of Theorem \ref{thm:geo-local-bound}}
We recall the following well-known Hoffman's error bound:
\begin{lemma}[Theorem 9 of \cite{Pang1997}]
Given a convex polyhedron expressed as the solution set of a system of linear inequalities and equations defined by a pair of matrices $(\mf A,\mf B)$:
\[
S:=\l\{ \mf x\in\mb R^d:~\mf A\mf x \leq a,~\mf B\mf x = \mf b \r\}.
\]
There exists a scalar $c>0$ such that for all $(\mf a, \mf b)$ for which $S$ is non-empty,
\[
\text{dist}(\mf x, S) \leq c\l(\|(\mf A\mf x - \mf a)_+\| + \|\mf B\mf x - \mf b\|\r), ~\forall \mf x\in\mb R^d, 
\]
where for any vector $\mf y\in\mb R^n$, $\|(\mf y)_+\|:= \sqrt{\sum_{i=1}^n\max\{y_i,0\}^2}$.
\end{lemma}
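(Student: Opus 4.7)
The plan is to prove Hoffman's bound by the classical projection plus Carath\'eodory argument. First, I would absorb the equality constraints into inequalities by writing $\mf B \mf x = \mf b$ as the pair $\mf B \mf x \leq \mf b$ and $-\mf B \mf x \leq -\mf b$. This lets me express $S = \{\mf x \in \mb R^d: \mf C \mf x \leq \mf d\}$ with $\mf C$ depending only on $(\mf A, \mf B)$, and reduces the claim to showing
\[
\text{dist}(\mf x, S) \leq \kappa \, \|(\mf C \mf x - \mf d)_+\|
\]
for some $\kappa$ depending only on $\mf C$, since $\|(\mf B \mf x - \mf b)_+\|^2 + \|(-\mf B \mf x + \mf b)_+\|^2 = \|\mf B \mf x - \mf b\|^2$ (the positive and negative parts are complementary).

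Next, for any $\mf x \notin S$, let $\bar{\mf x}$ be the Euclidean projection of $\mf x$ onto $S$. The KKT conditions for the projection QP yield a multiplier $\lambda \geq 0$ satisfying stationarity $\mf x - \bar{\mf x} = \mf C^T \lambda$ and complementary slackness $\lambda_i (\mf C_i \bar{\mf x} - d_i) = 0$ for each row index $i$. Writing $\mf x - \bar{\mf x}$ as a conic combination of $\{\mf C_i^T\}_{i \in \supp(\lambda)}$ and invoking Carath\'eodory's theorem for conic hulls, I would replace $\lambda$ by another non-negative multiplier whose support $J$ is a subset of the old support (so $\mf C_J \bar{\mf x} = \mf d_J$ still holds) and for which the rows $\{\mf C_i\}_{i \in J}$ are linearly independent; in particular $|J| \leq d$.

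The core estimate then runs as follows. From stationarity and $\mf C_J \bar{\mf x} = \mf d_J$,
\[
\|\mf x - \bar{\mf x}\|^2 = (\mf x - \bar{\mf x})^T \mf C_J^T \lambda_J = \lambda_J^T (\mf C_J \mf x - \mf d_J) \leq \sum_{i \in J} \lambda_i (\mf C_i \mf x - d_i)_+ \leq \|\lambda_J\| \cdot \|(\mf C \mf x - \mf d)_+\|,
\]
using $\lambda_i \geq 0$ together with $a \leq a_+$ in the first inequality and Cauchy--Schwarz in the second. On the other hand, because $\mf C_J$ has linearly independent rows, $\mf C_J \mf C_J^T$ is invertible and $\lambda_J = (\mf C_J \mf C_J^T)^{-1} \mf C_J (\mf x - \bar{\mf x})$, so $\|\lambda_J\| \leq \|\mf C_J^T (\mf C_J \mf C_J^T)^{-1}\|_{op} \|\mf x - \bar{\mf x}\|$. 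Combining and dividing by $\|\mf x - \bar{\mf x}\|$ gives $\|\mf x - \bar{\mf x}\| \leq \|\mf C_J^T (\mf C_J \mf C_J^T)^{-1}\|_{op} \, \|(\mf C \mf x - \mf d)_+\|$.

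Finally, I would set $\kappa := \max_B \|\mf C_B^T (\mf C_B \mf C_B^T)^{-1}\|_{op}$, taken over the finite family of index sets $B \subseteq \{1,\dots,\text{(\#rows of } \mf C)\}$ for which $\{\mf C_i\}_{i \in B}$ are linearly independent. The Carath\'eodory-produced $J$ always lies in this family, and the family depends only on $\mf C$, so $\kappa$ is uniform in $\mf d$, hence in $(\mf a, \mf b)$. The main delicacy is the Carath\'eodory reduction: one must check that upon shrinking the support of $\lambda$, both $\lambda \geq 0$ and complementary slackness are preserved, and that the stationarity identity $\mf x - \bar{\mf x} = \mf C^T \lambda$ continues to hold for the new multiplier. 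Non-negativity and stationarity are built into the conclusion of conic Carath\'eodory, while complementary slackness survives because shrinking the support keeps it within the set of active constraints $\{i : \mf C_i \bar{\mf x} = d_i\}$ that already contained $\supp(\lambda)$.
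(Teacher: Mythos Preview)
Your argument is correct and is essentially the standard projection--Carath\'eodory proof of Hoffman's bound. However, the paper does not prove this lemma at all: it is stated as a classical result (``well-known Hoffman's error bound'') and simply attributed to \cite{Pang1997}, then used as a black box in the proof of Theorem~\ref{thm:geo-local-bound}. So there is no ``paper's own proof'' to compare against; you have supplied a self-contained proof where the paper only quotes the literature.
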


The idea is to translate the local error bound on function $G(\nu)$ (i.e. Lemma \ref{lem:all-G}) back to the local error bound on the original dual function $F(\lambda)$ using the equivalence relation between minimizing the dual function \eqref{eq:dual-geo-median} and problem (\ref{change-prob-1}-\ref{change-prob-2}). Recall the definition of $F(\lambda)$ in \eqref{eq:dual-geo-median} and $G(\nu)$ in \eqref{eq:def-G}, we have $F(\lambda) = G(\nu)$ for any $\lambda\in\mb R^{nd}$ such that $\mf A^T\lambda = \nu$.
Thus, by Lemma \ref{lem:all-G}, with $\nu$ replaced by  $\mf A^T\lambda$ and  $G(\nu)$ replaced by $F(\lambda)$,
\[
\|\mf A^T\lambda - \nu^*\|\leq C_0
(F(\lambda) - F^*)^{1/2}, 
\]
where $F^*$ is the optimal dual function value, and we use the fact that $F^*$ equals $G(\nu^*)$, the optimal objective of  (\ref{change-prob-1}-\ref{change-prob-2}). Since the solution $\nu^*$ to (\ref{change-prob-1}-\ref{change-prob-2}) is unique, the set of optimal Lagrange multipliers (i.e. the set of minimizers of \eqref{eq:dual-geo-median}) $\Lambda^* = \l\{\lambda\in\mb R^{nd}:~\mf A^T\lambda = \nu^*\r\}$. By Hoffman's bound with $S = \Lambda^*$, we have
\[
\text{dist}(\lambda,\Lambda^*)\leq c\|\mf A^T\lambda- \nu^*\| 
\]
for some positive constant $c$. Thus,
\[
\text{dist}(\lambda,\Lambda^*)\leq\frac{C_0}{c}(F(\lambda) - F^*)^{1/2}.
\]
Furthermore, since for any $\lambda^*\in\Lambda^*$ there exists a unique $\nu^*$ such that $\mf A^T\lambda^* = \nu^*$, it follows $\mc P_{\mf A}\lambda^* = \mf A(\mf A^T\mf A)^{\dagger}\mf A^T\lambda^* = \mf A(\mf A^T\mf A)^{\dagger}\nu^*$.

\subsection{Simulation setups and additional simulation results}\label{add-simulation}
In this section, we give more details about our simulation along with more simulation results. First of all, in all three cases of Section \ref{sec:simulation}, the randomly generated graph are connected. The way we ensure its connectivity is to first connect all nodes together by assigning $(n-1)$ edges, and then, randomly pick the remaining edges from the edge set of $n(n+1)/2$ edges according to the connectivity ratio. An example graph containing 20 nodes with connectivity ratio of $0.13$ is shown in Fig. \ref{ss}.

 \begin{figure}[htbp]
 \centering
   \includegraphics[width=4in]{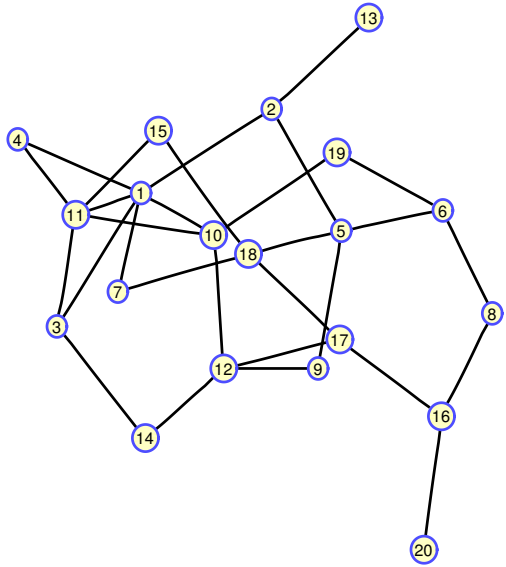} 
   \caption{Illustration of a randomly generated connected graph with $n =20$ and connectivity ratio=0.13.} 
   \label{ss}
\end{figure}

The parameters of algorithms are set as follows: (1) For the DSM algorithm, the learning rate $\alpha=10$. (2) For the EXTRA algorithm, the learning rate $\alpha=5$ when $n=20$ and $\alpha=20$ when $n=50,100$. (3) For the Jacobian ADMM, the proximal weight $\rho = 2\sigma_{\max}(\mf A)$, where $\sigma_{\max}(\mf A)$ is the maximum eigenvalue of $\mf A$.
(4) For the smoothing algorithm, we fix the smoothing parameter $\mu = 10^{-5}$ throughout the experiments. (5) For our proposed algorithm, we set $D = 10\sqrt{d}$, where $d$ is the dimension of the data and the desired accuracy
$\varepsilon=10^{-3}$. During the 
$k$-th stage, the time horizon $T^{(k)} = \frac{D}{\varepsilon^{0.8}}\cdot\frac{k}{K}$, where $K=\lceil\log_2(1/\varepsilon)\rceil+1$ is the total number of rounds. The reason why we consider increasing the time horizon gradually is that we observe in practice the algorithm converges very fast during the first few stages and it is not necessary to run a long time. The aforementioned parameters of all algorithms are chosen in an ad-hoc way to ensure good performances.

Here, we perform additional simulations to show that our algorithm also works well under other scenarios where we change the dimension of the data. In the experiment below, the number of agents is set to be $n=100$ and all the parameters are as described above. We vary the dimension of the data from 20 to 200, where each entry of the data points is still uniformly distributed over $[0,10]$. The results are shown in Fig. \ref{ss2}.
 
\begin{figure*}[ht!] 
    \centering
    \begin{subfigure}[t]{0.28\textwidth}
        \centering
        \includegraphics[height=3cm] {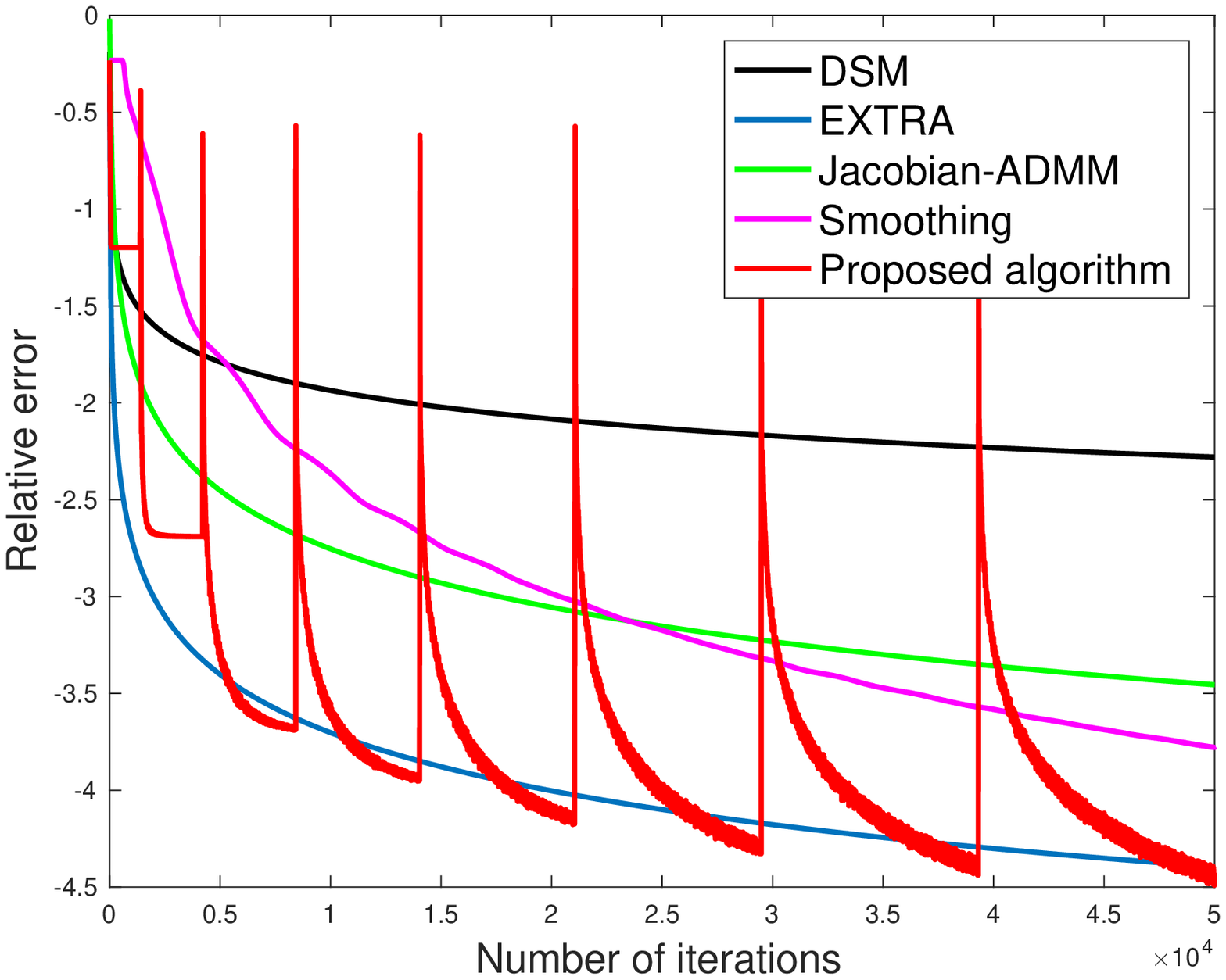}
        \vspace{-1.5em}
        \caption{$d=20$, ratio=0.15.}
    \end{subfigure}%
    ~ 
    \begin{subfigure}[t]{0.28\textwidth}
        \centering
        \includegraphics[height=3cm] {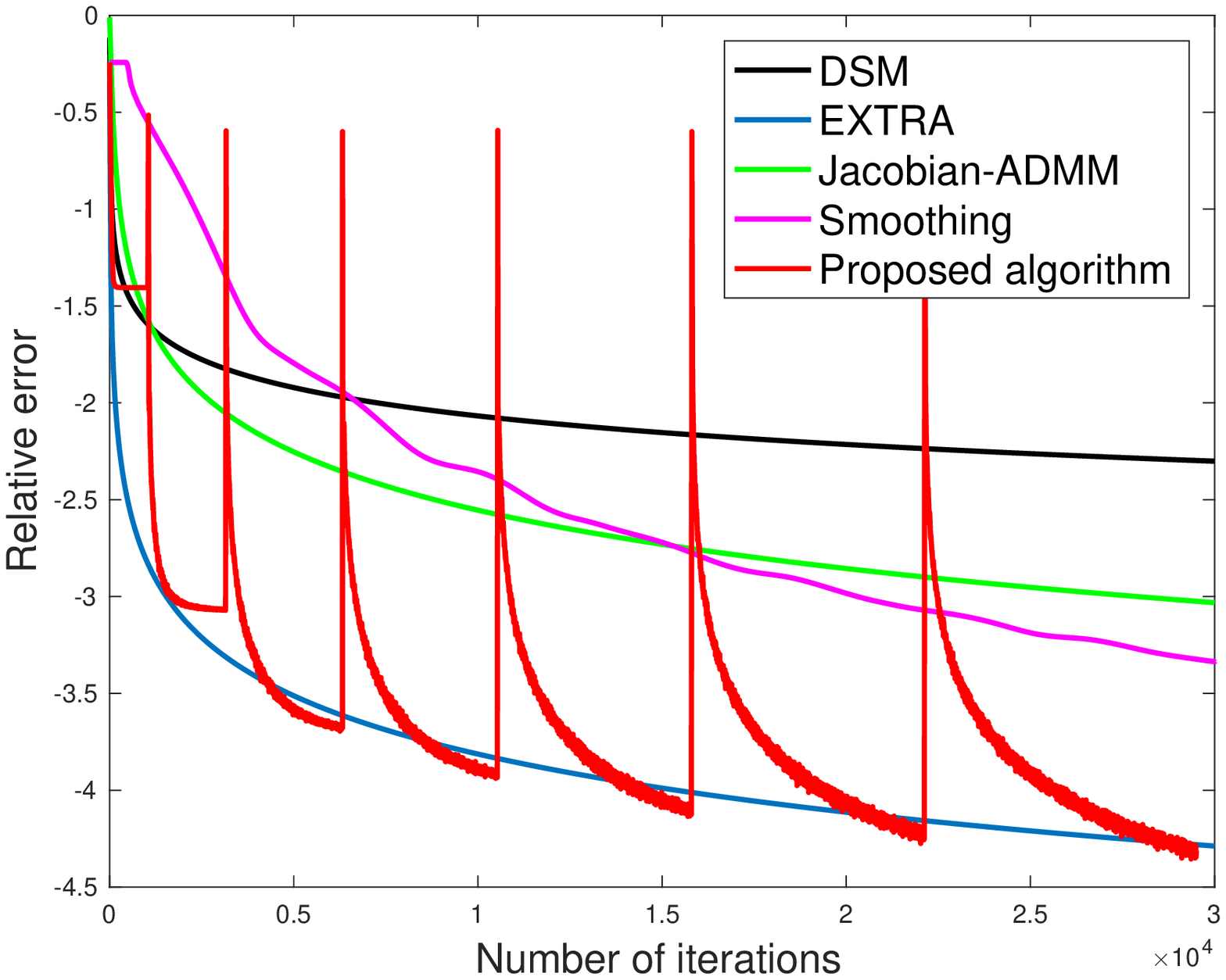}
        \vspace{-1.5em}
        \caption{$d=50$, ratio=0.1.}
    \end{subfigure}
    ~
    \begin{subfigure}[t]{0.28\textwidth}
        \centering
        \includegraphics[height=3cm] {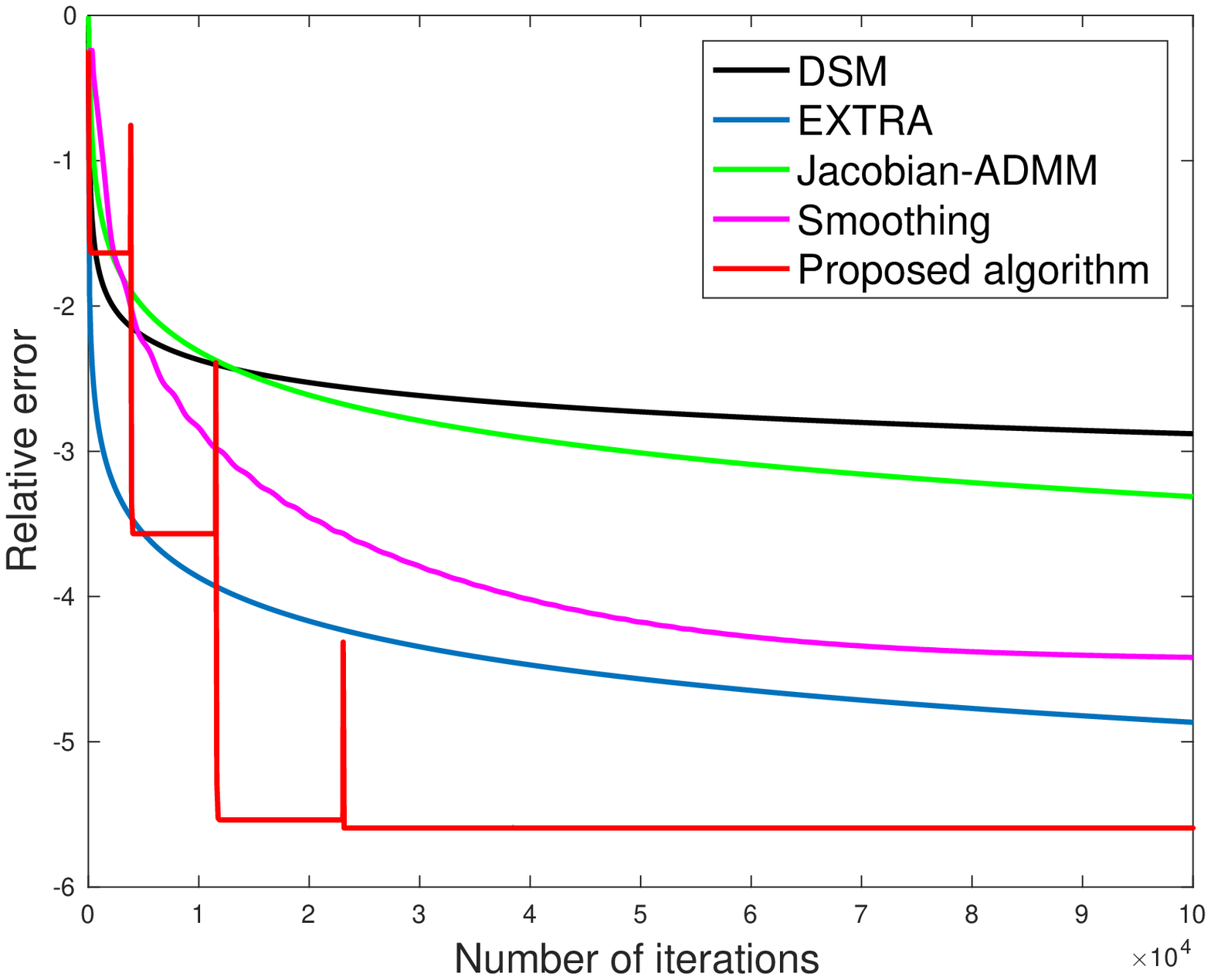}
        \vspace{-1.5em}
        \caption{$d=150$, ratio=0.1.}
    \end{subfigure}
        ~
     \begin{subfigure}[t]{0.28\textwidth}
        \centering
        \includegraphics[height=3cm] {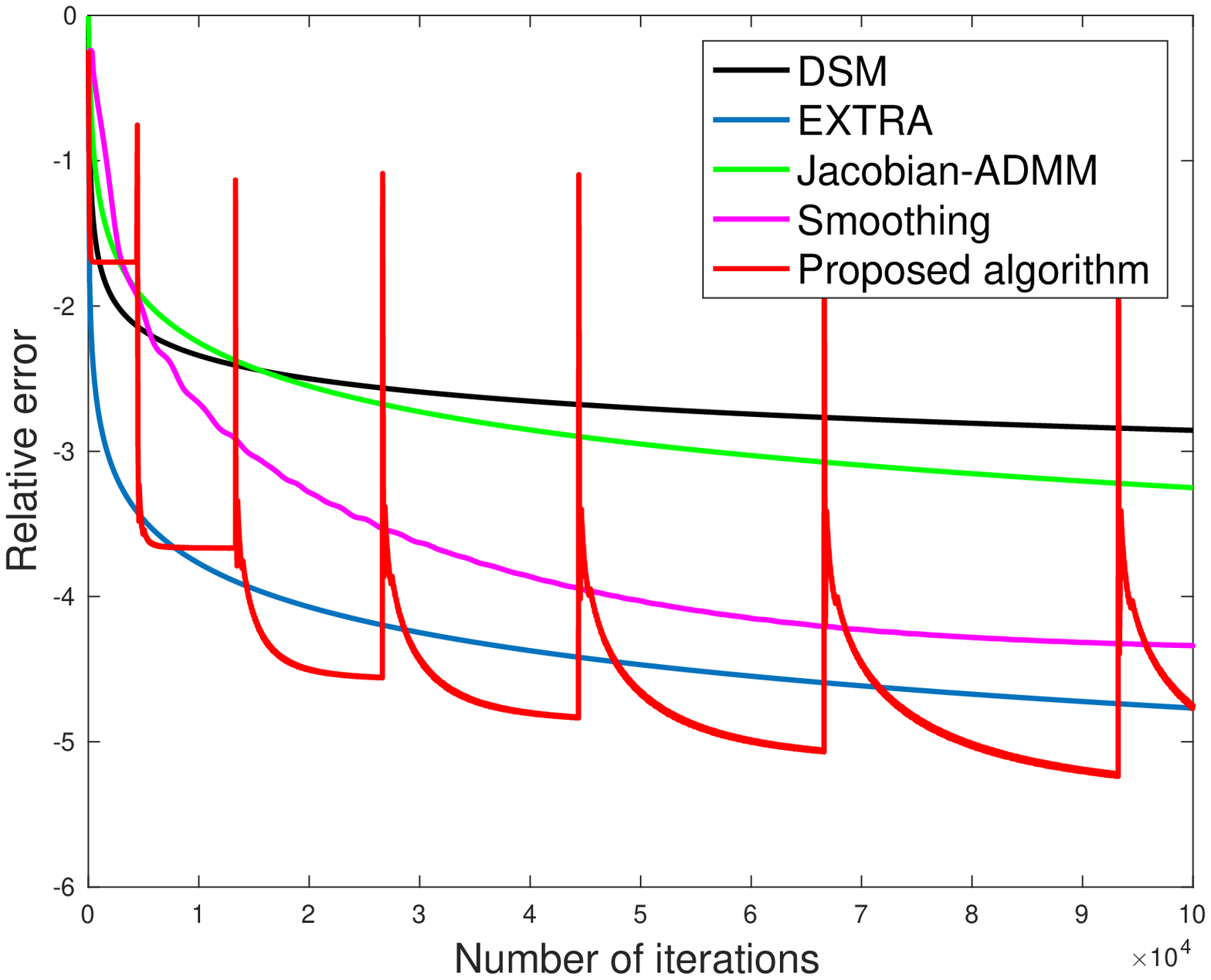}
        \vspace{-1.5em}
        \caption{$d=200$, ratio=0.1.}
    \end{subfigure}
    ~
     \begin{subfigure}[t]{0.28\textwidth}
        \centering
        \includegraphics[height=3cm] {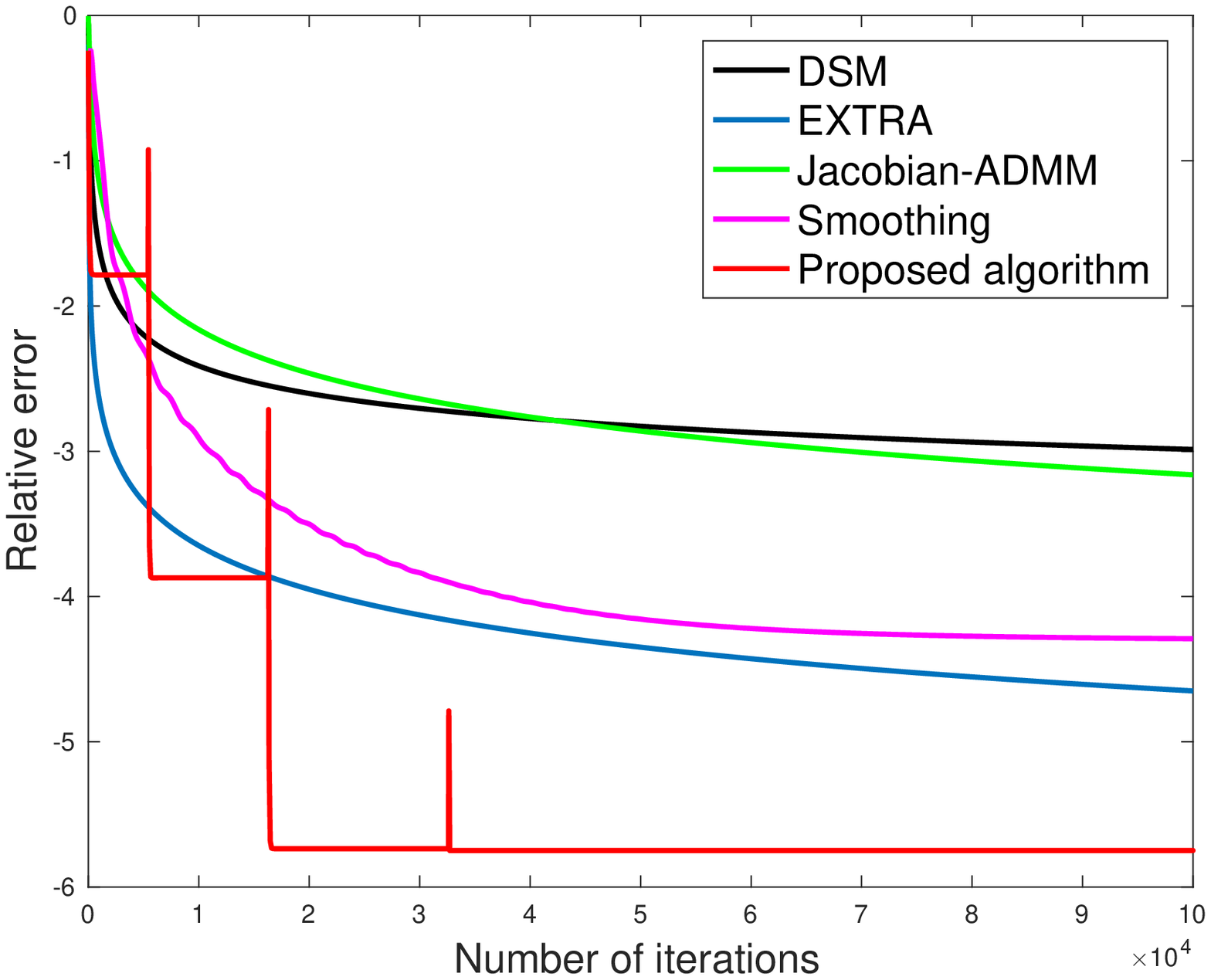}
        \vspace{-1.5em}
        \caption{$d=300$, ratio=0.1.}
    \end{subfigure}
  \caption{Performance of different algorithms under various dimensions of the vectors.}
  \label{ss2}
\end{figure*}

Finally we demonstrate the performance of our algorithm under different network connectivity ratios. In the experiment below, the number of agents is set to be $n=150$, dimension $d=100$, and all the parameters are as described above. The results are shown in Fig. \ref{ss3}.

\begin{figure*}[ht!] 
    \centering
    \begin{subfigure}[t]{0.28\textwidth}
        \centering
        \includegraphics[height=3cm] {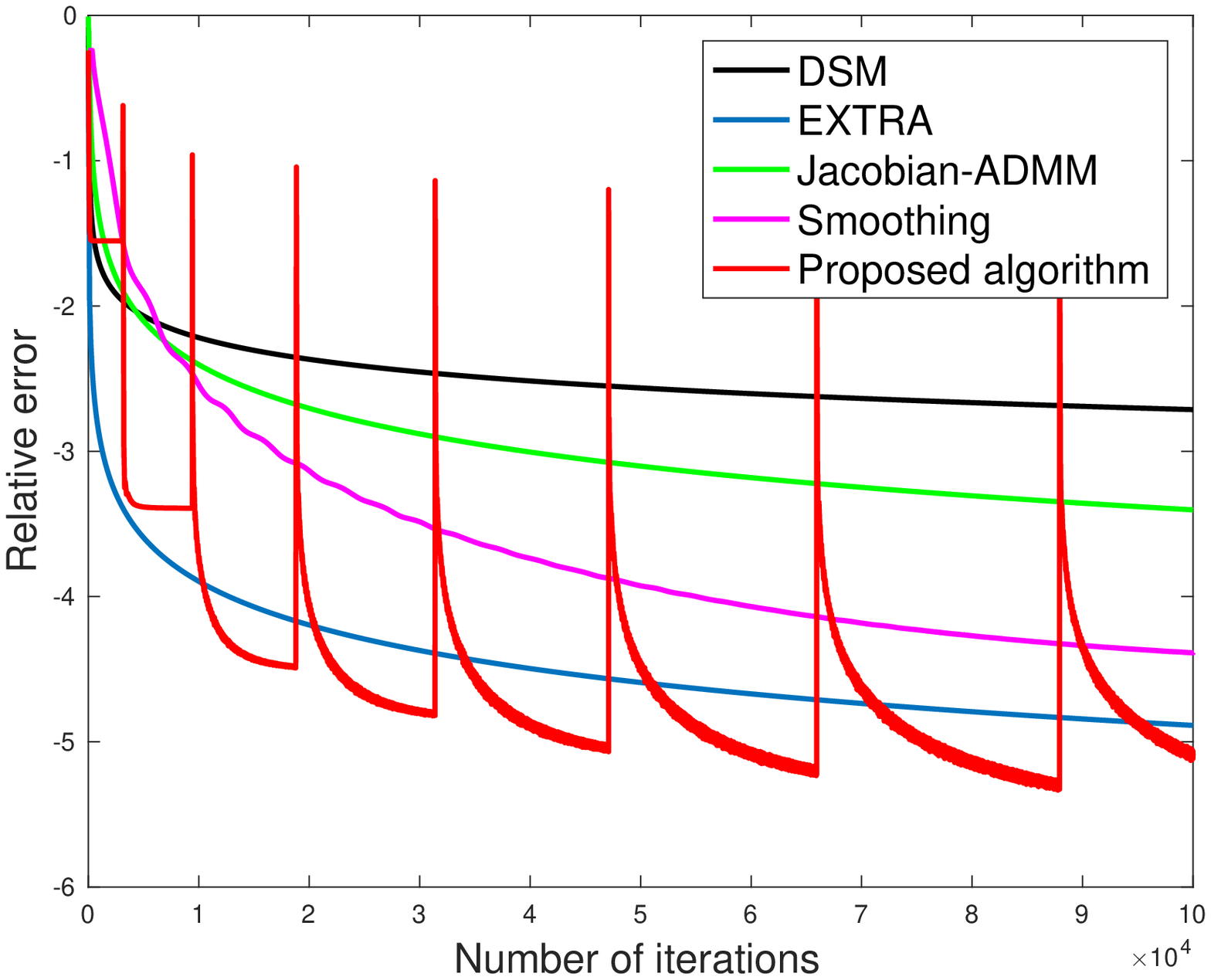}
        \vspace{-1.5em}
        \caption{$Ratio = 0.05$}
    \end{subfigure}%
    ~ 
    \begin{subfigure}[t]{0.28\textwidth}
        \centering
        \includegraphics[height=3cm] {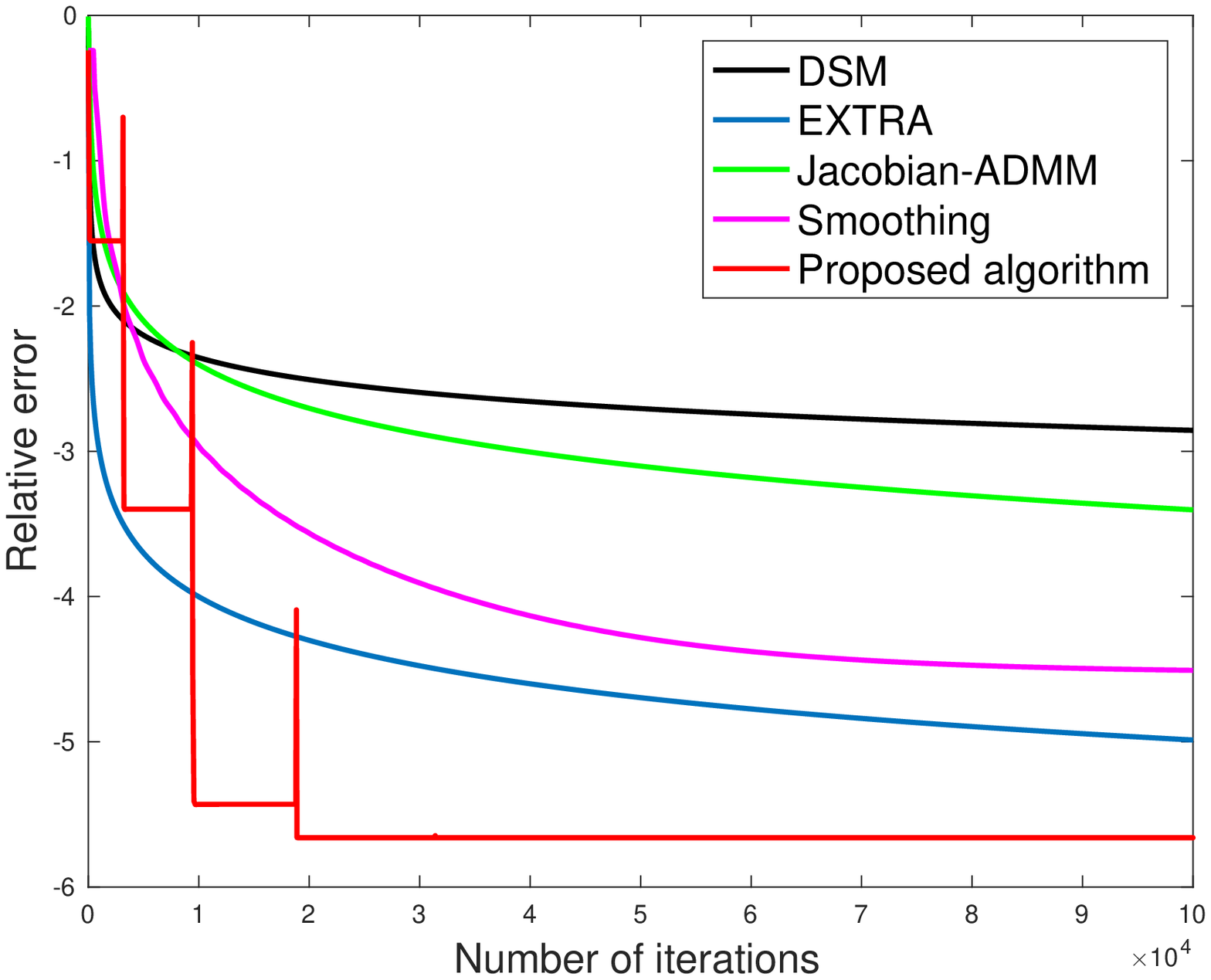}
        \vspace{-1.5em}
        \caption{$Ratio = 0.15$}
    \end{subfigure}
    ~
    \begin{subfigure}[t]{0.28\textwidth}
        \centering
        \includegraphics[height=3cm] {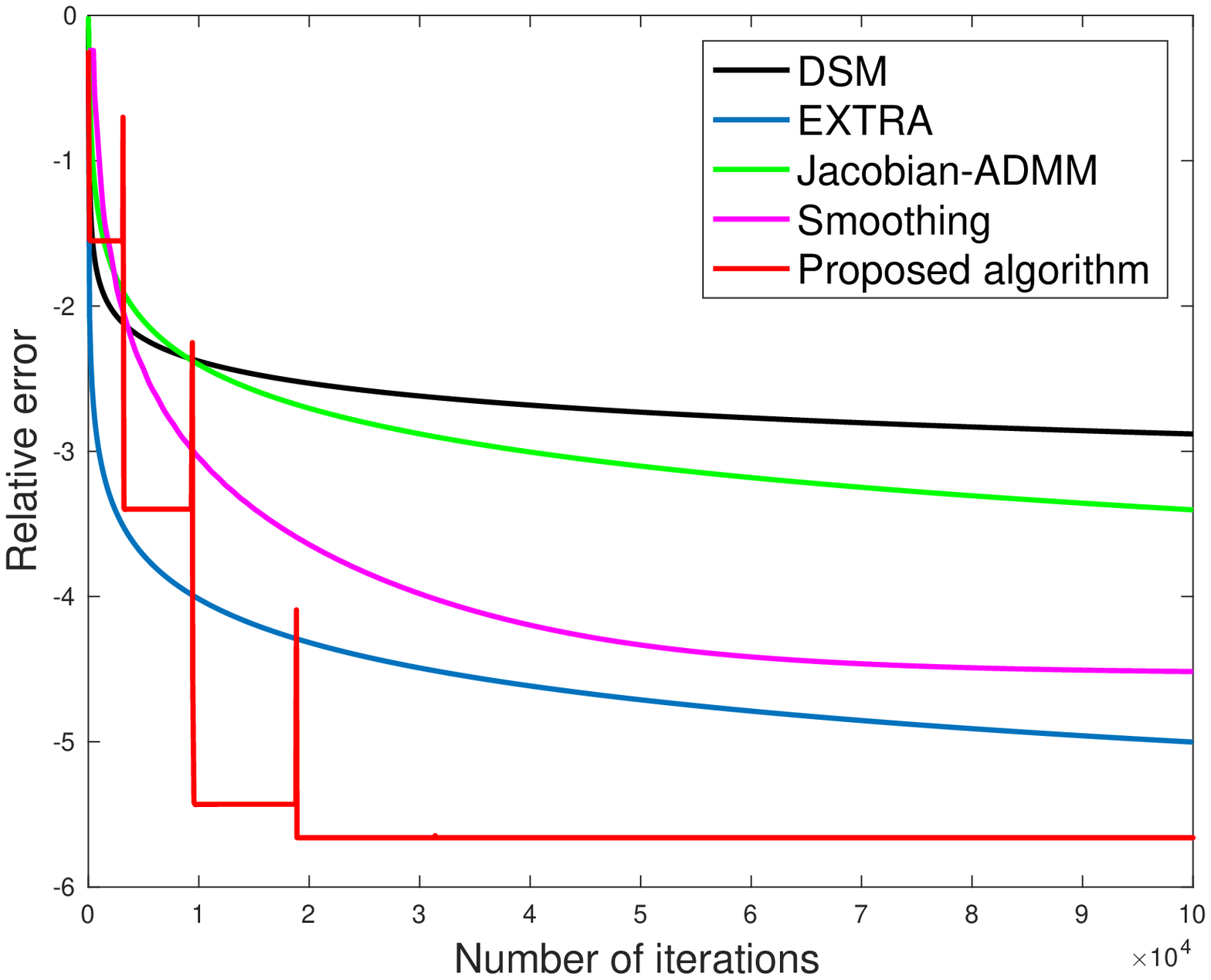}
        \vspace{-1.5em}
        \caption{$Ratio = 0.3$}
    \end{subfigure}
 
    \caption{Comparison of different algorithms on networks of different connectivity ratios.}
    \label{ss3}
\end{figure*}

\end{document}